\documentclass[12pt]{amsart}
\usepackage[margin=1.1 in]{geometry}
\usepackage{amssymb}
\usepackage{csquotes}
\usepackage{enumitem}
\usepackage{xcolor}
\usepackage{hyperref}
\usepackage{cite}

\theoremstyle{plain}
\newtheorem{theorem}{Theorem}[section]
\newtheorem{lemma}[theorem]{Lemma}
\newtheorem{proposition}[theorem]{Proposition}
\newtheorem{corollary}[theorem]{Corollary}

\theoremstyle{definition}
\newtheorem{definition}[theorem]{Definition}

\newtheorem{remark}[theorem]{Remark}
\newtheorem{example}[theorem]{Example}

\numberwithin{equation}{section}

\makeatletter
\@namedef{subjclassname@2020}{\textup{2020} Mathematics Subject Classification}
\makeatother

\title[The condition $(\Omega)$ for weighted spaces of holomorphic functions]{A linear topological invariant for weighted spaces of holomorphic functions 
}
\author{Andreas Debrouwere, Quinten Van Boxstael}
\address{A. Debrouwere, Department of Mathematics and Data Science \\ Vrije Universiteit Brussel, Belgium\\ Pleinlaan 2 \\ 1050 Brussels \\ Belgium}
\email{Andreas.Debrouwere@vub.be}

\address{Q. Van Boxstael,  Department of Mathematics and Data Science \\ Vrije Universiteit Brussel, Belgium\\ Pleinlaan 2 \\ 1050 Brussels \\ Belgium}
\email{Quinten.Van.Boxstael@vub.be}

\subjclass[2020]{46E10, 46A63, 46E40, 35A01}
\keywords{weighted spaces  of holomorphic functions; the linear topological invariant $(\Omega)$; Surjectivity of the Cauchy-Riemann operator; weighted spaces of vector-valued smooth functions}

\begin{document}
\begin{abstract}We study the linear topological invariant \((\Omega)\) for a class of Fr\'echet spaces of holomorphic functions of rapid decay on strip-like domains in the complex plane, defined via weight function systems. We obtain a complete characterization of the property \((\Omega)\) for such spaces in terms of an explicit condition on the defining weight function systems. As an application,  we investigate the surjectivity of the Cauchy-Riemann operator on certain weighted spaces of vector-valued smooth functions. 
\end{abstract}

\maketitle

\section{Introduction}
 The splitting theory for Fr\'echet spaces \cite{Meise-Vogt,V-FuncExt1Frechet} is one of the major tools of modern abstract functional analysis, with numerous applications including isomorphic classification and sequence space representations for  function spaces, the (non)-existence of linear continuous extension operators on spaces of Whitney jets, and, solvability of vector-valued and parameter dependent PDE's. The hypotheses in this theory are typically formulated in terms of certain   $(\Omega)$ and/or $(DN)$-type conditions. Hence, verifying these linear topological invariants for concrete function spaces becomes an important issue, often leading to interesting analytic problems.
 
 The goal of this article is to study the condition $(\Omega)$ for a class of weighted  Fr\'echet spaces of holomorphic functions. 
This and related properties have been extensively studied for various types of (weighted) spaces of holomorphic functions, see e.g.~\cite{Sequence space representations, Kruse1,Kruse2, Langenbruch2012, Langenbruch2016, Meise, Momm, Hadamard}. As an application, we investigate the surjectivity of the Cauchy-Riemann operator $$\displaystyle\overline{\partial} = \frac{1}{2}\left(\frac{\partial}{\partial x}+i\frac{\partial}{\partial y}\right)$$  on certain weighted spaces of vector-valued smooth functions; see \cite{Kruse1,Kruse2} for earlier work in this direction. This serves as the main motivation of our work on the property $(\Omega)$. 
 
 We now introduce the weighted spaces of smooth and holomorphic functions we shall be concerned  with in this article, originally defined in \cite{Surjectivity}. By a  \emph{generalized strip} we mean a subset of the complex plane of the form
 $$
T^{F,G} = \{z \in \mathbb{C} \mid -G(\mathrm{Re \ }z) < \mathrm{Im \ }z < F(\mathrm{Re \ }z)\},
 $$
where $F,G: \mathbb{R} \to (0,\infty)$ are uniformly continuous and satisfy  \(0< \inf_{t\in \mathbb{R}} F(t) \leq \sup_{t\in \mathbb{R}} F(t)<\infty \) and  \(0< \inf_{t\in \mathbb{R}} G(t) \leq \sup_{t\in \mathbb{R}} G(t)<\infty \). A \emph{weight function} is a non-decreasing unbounded continuous function $w: [0,\infty) \to [0,\infty)$, while a \emph{weight function system} is a pointwise non-dereasing sequence $W = (w_N)_{N \in \mathbb{N}}$ of weight functions. We define \(\mathcal{K}_{W}(T^{F,G})\) as the Fr\'echet space  consisting of all \(f \in C^{\infty}(T^{F,G})\) such that
 \[
 \sup_{z\in \overline{T^{aF,aG}}} e^{w_N(\lvert \mathrm{Re}z\rvert)} |f^{(\alpha)}(z)|<\infty \qquad \forall N \in  \mathbb{N}, \alpha \in \mathbb{N}^2, a \in (0,1).
 \]
and $\mathcal{U}_W(T^{F,G})$ as its subspace consisting of holomorphic functions on $T^{F,G}$.  

In our  first main result  (Theorem \ref{theorem Omega}),  we characterize the property \((\Omega)\) for  $\mathcal{U}_W(T^{F,G})$ in terms of an explicit condition on $W$ (under mild assumptions on $W$). A widely used class of weight function systems in the literature consists of those of the form $\tilde{W}_w = (Nw)_{N \in \mathbb{N}}$, where \(w \colon [0,\infty) \rightarrow  [0,\infty)\) is a weight function.
For such weight function systems, we obtain that, if \(w(2t) = O(w(t))\) as $t  \to \infty$ and
$$
\int_0^{\infty} e^{-Cw(t)}\text{d}t < \infty
$$
for some $C >0$, then  $\mathcal{U}_{\tilde{W}_w}(T^{F,G})$ always satisfies $(\Omega)$ (Corollary \ref{omega for N outside}).
  This  extends an earlier result \cite{Sequence space representations} of the first author,  where only  horizontal strips $T_h = \mathbb{R} + i(-h,h)$, $h >0$, were considered.  We also mention the work of Kruse \cite{Kruse2} dealing with sets of the form $T^h \backslash K$, $h >0$ and $K \subseteq \overline{\mathbb{R}}$ compact, but under much more restrictive conditions on the weight  function $w$.

The condition \((\Omega)\)  can be be considered either as a quantitative approximation property with respect to the seminorms  of the Fr\'echet space under consideration or as a log-convex inequality for its  dual seminorms. In \cite{Sequence space representations} the first point of view was taken and the approximation property was shown by using  a technique, developed by Langenbruch \cite{Langenbruch2012},  combining so-called holomorphic cut-off functions and  Hörmander's \(L^2\)-method.  However, due to the use of the \(L^2\)-method, this approach only seems to work for horizontal strips. We develop here a new technique that is applicable to the much broader class of generalized strips. To this end, we view \((\Omega)\) as a log-convex inequality for the  dual seminorms.

Our proof consists of two main steps.  First, we give a concrete description of the dual of  $\mathcal{U}_W(T^{F,G})$ by weighted spaces of holomorphic functions.  This is much inspired by the analytic representation theory for  Fourier (ultra)hyperfunctions \cite{debrouwere2018non,HSP,Kruse2}.
Next, we show  a log-convex inequality for certain weighted supremum norms of holomorphic functions, reminiscent of the Hadamard three-lines theorem.
The condition $(\Omega)$ for $\mathcal{U}_W(T^{F,G})$ then follows by applying this inequality to the spaces representing  the dual of  $\mathcal{U}_W(T^{F,G})$. 

We mention that the conditions on the weight function systems imposed in our work are more restrictive  than in Langenbruch's method \cite{Langenbruch2012, Sequence space representations}.

 In \cite{Surjectivity}, we  characterized the surjectivity of the map 
 \begin{equation}
\label{sv-surj}
\displaystyle\overline{\partial} : \mathcal{K}_W(T^{F,G}) \to \mathcal{K}_W(T^{F,G})
\end{equation}
 in terms of a growth condition on $W$. Given a locally convex space $E$, we denote by  $\mathcal{K}_W(T^{F,G};E)$ the natural $E$-valued version of $\mathcal{K}_W(T^{F,G})$. Assuming that the map \eqref{sv-surj} is surjective, we study in this paper the surjectivity of the $E$-valued Cauchy-Riemman operator 
\begin{equation}
\label{vv-surj}
\displaystyle\overline{\partial} : \mathcal{K}_W(T^{F,G};E) \to \mathcal{K}_W(T^{F,G};E).
\end{equation}
If  $E$ is a Fr\'echet space, the map \eqref{vv-surj} is always surjective, as follows from the general theory of topological tensor products; see Remark \ref{Frechet}$(iii)$ for further details. We focus here on locally convex spaces $E$ having a countable fundamental family of bounded sets (such as $(DF)$-spaces), for which the surjectivity problem is more subtle. 
 
 Assume that $ \mathcal{U}_W(T^{F,G})$ satisfies $(\Omega)$ and let $E$ be a locally complete locally convex Hausdorff space having a countable fundamental family of bounded sets. In our second main result (Theorems \ref{Vector-valued surjectivity} and \ref{Converse}), we show that the map \eqref{vv-surj} is surjective if and only if $E$ satisfies the condition $(A)$ \cite{V-VektorDistrRandHolomorpherFunk} (Definition \ref{DEFA}).  This can be viewed as a weighted version of  a classical result of Vogt \cite{V-VektorDistrRandHolomorpherFunk},  which states that the map
 $$
\displaystyle\overline{\partial} : C^\infty(X;E) \to  C^\infty(X;E) 
$$
$X \subseteq \mathbb{C}$ open,  is surjective if and only $E$ satisfies $(A)$. 

 The condition $(A)$ is closely related to the property $(DN)$ for Fr\'echet spaces. For instance, a reflexive $(DF)$-space satisfies $(A)$ if and only if its strong dual satisfies $(DN)$. Hence, duals of power series spaces of infinite type  satisfy $(A)$, whereas  duals of power series spaces of finite type do not. Consequently, the space of tempered distributions $\mathcal{S}'(\mathbb{R}^d)$ satisfies $(A)$ \cite[Corollary 31.14]{Meise-Vogt}, while  the space of holomorphic germs on  a compact set $K$ in $\mathbb{C}^d$ that is the closure of a  bounded Reinhardt domain does not \cite[Theorem 5.5]{THD}. 
 
As an application of our second main result, we will show that the map
$$
\displaystyle\overline{\partial} : \mathcal{K}_W(T^{F,G};\mathcal{A}(\mathbb{R})) \to \mathcal{K}_W(T^{F,G};\mathcal{A}(\mathbb{R})),
$$
where $\mathcal{A}(\mathbb{R})$ denotes the space of real analytic functions on $\mathbb{R}$, is not surjective (Corollary \ref{real analytic}). This means that the Cauchy-Riemann equation, with real analytic parameter dependence,  is not solvable in $\mathcal{K}_W(T^{F,G})$.

This paper is organized as follows. We start with a preliminary section in which we introduce the weighted spaces of smooth and holomorphic functions we shall work with. Moreover, we state here the necessary background on linear topological invariants. 
Next, in Section \ref{sect-main results},  we state our main results and discuss some examples. Our characterization of  \((\Omega)\) for the spaces \(\mathcal{U}_W(T^{F,G})\) is proven in Section \ref{proof theorem omega}. Finally, in Section \ref{Proof converse}, we show our results about the surjectivity of the Cauchy-Riemann operator on the vector-valued spaces  \(\mathcal{K}_W(T^{F,G;E})\),
under a mild condition on \(W\).

\section{Preliminaries}
In this preliminary section, we fix the notation and introduce the weighted spaces of smooth and holomorphic functions  we shall be concerned with. These spaces were originally introduced in our earlier work \cite{Surjectivity}. 
Additionally, we provide the necessary background on linear topological invariants for locally convex spaces.

\subsection{Notation}

Let $E$ be a lcHs (=  locally convex Hausdorff space). We denote by $E^{\prime}$ the dual of $E$.  Given an absolutely convex bounded subset  $B$ of $E$,  we denote by $E_{B}$ the subspace of $E$ spanned by $B$ endowed  with the topology generated by the Minkowski functional of $B$. Since $E$ is Hausdorff, $E_{B}$ is normed. The set $B$ is called a \emph{Banach disk} if $E_B$ is complete. $E$ is said to be \emph{locally complete} if every bounded subset of $E$  is contained in a Banach disk. See \cite[Proposition 5.1.6]{BP} for various characterizations of this condition. Every sequentially complete lcHs is locally complete \cite[Corollary 23.14]{Meise-Vogt}.

Given two lcHs $E$ and $F$,  we denote by $L(E,F)$ the space consisting of all continuous linear maps from $E$ into $F$. We write $T^t: F' \to E'$ for the transpose of a map \(T\in L(E,F)\). 
The $\varepsilon$-product \cite{K-Ultradistributions3,S-TheorieDistValeurVect} of $E$ and $F$ is defined as
	\[ E \varepsilon F = L(F^{\prime}_{c}, E) , \]
where the subscript $c$ indicates that we endow $F^{\prime}$ with the topology of uniform convergence on absolutely convex compact subsets of $E$.
Let $E_1,E_2, F_1, F_2$ be lcHs. For $R \in L(E_1,E_2)$ and $S \in L(F_1,F_2)$ we define the map
\[ R \varepsilon S: E_1 \varepsilon F_1 \rightarrow E_2 \varepsilon F_2, \ A \mapsto   R \circ A \circ S^{t}. \]
\subsection{Weighted spaces of smooth and holomorphic functions.}
We start by introducing generalized strips.
\begin{definition}\label{gstrips}
    We write \(\mathcal{F}(\mathbb{R})\) for the family consisting of all functions \(F \colon \mathbb{R} \rightarrow [0,\infty)\) that are uniformly continuous and satisfy \(0<\inf_{t\in\mathbb{R}}F(t)\leq \sup_{t\in \mathbb{R}} F(t) < \infty\).  We define  \(\mathcal{F}_1(\mathbb{R})\) as the family consisting of all \(F \in \mathcal{F}(\mathbb{R}) \cap C^1(\mathbb{R})\) with  $\sup_{t\in \mathbb{R}} |F'(t)| < \infty$.
    Given \(F,G \in \mathcal{F}(\mathbb{R})\), we define  \emph{the generalized strip $T^{F,G}$} as 
    \[
    T^{F,G} = \{z\in\mathbb{C} \mid -G(\mathrm{Re} z) < \mathrm{Im} z < F(\mathrm{Re}z)\}.
    \]
     For \(F = G = h \in (0,\infty)\), we simply write 
    \[
    T^h = \{z\in \mathbb{C} \mid -h<\mathrm{Im}z < h\}.
    \]
\end{definition}

Next, we state a separation lemma. We need the following definition.
    \begin{definition}
  Let \(F,G \in \mathcal{F}(\mathbb{R})\). We write   \(F < G\)  to indicate that there is $\varepsilon >0$ such that 
  \[\{t+iF(t)\mid t \in \mathbb{R}\} + \overline{B}(0,\varepsilon)\subseteq \{z\in\mathbb{C}\mid  \mathrm{Im}z < G(\mathrm{Re }z)\}.\]
  Given  $\Phi \in \mathcal{F}(\mathbb{R})$, we write \(F < \Phi < G\) if \(F < \Phi\) and \(\Phi  < G\). If this holds, there is $\varepsilon >0$ such that
  \[\{t+i\Phi(t)\mid t \in \mathbb{R}\} + \overline{B}(0,\varepsilon)\subseteq \{z\in\mathbb{C}\mid F(\mathrm{Re } z)<  \mathrm{Im}z < G(\mathrm{Re }z)\}.\]
\end{definition}

\begin{lemma}\label{lipschitz} \cite[Lemma 2.3]{Surjectivity}\footnote{In fact, the function $\Phi$ can be chosen in \(\mathcal{F}(\mathbb{R})\cap C^{\infty}(\mathbb{R})\) with all derivatives bounded, but we will not need this.}
Let \(F \in \mathcal{F}(\mathbb{R})\) and \(a \in (0,1)\). There exists $\Phi \in \mathcal{F}_1(\mathbb{R})$
such that $aF <  \Phi < F$. 
\end{lemma}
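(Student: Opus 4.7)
The plan is to construct $\Phi$ by mollifying a scaled version of $F$. Concretely, fix some $c\in(a,1)$, let $\rho_\delta(t)=\delta^{-1}\rho(t/\delta)$ with $\rho\in C^\infty_c(\mathbb{R})$ nonnegative, supported in $[-1,1]$, and of integral one, and set
\[
\Phi:=\Phi_\delta=c\,(F\ast\rho_\delta)
\]
for a sufficiently small $\delta>0$. The intuition is that $cF$ lies strictly between $aF$ and $F$ with a \emph{positive} gap on both sides because $\inf F>0$, and mollification only perturbs $cF$ by an amount controlled by the modulus of continuity $\omega_F$ of $F$, which vanishes as $\delta\to 0$ by uniform continuity.

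I would first verify that $\Phi_\delta\in\mathcal{F}_1(\mathbb{R})$. Smoothness is immediate. Since $\Phi_\delta$ is a convex combination (scaled by $c$) of translates of $F$, one has $c\inf_{t}F\le \Phi_\delta(t)\le c\sup_{t}F$, giving the required two-sided positive bounds. Uniform continuity of $\Phi_\delta$ is inherited from that of $F$ via $|\Phi_\delta(t+h)-\Phi_\delta(t)|\le c\,\omega_F(|h|)$. Finally, $\Phi_\delta'=c\,(F\ast\rho_\delta')$ is the convolution of a bounded function with an $L^1$ function, hence bounded, so $\Phi_\delta\in\mathcal{F}_1(\mathbb{R})$.

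Next, I would pass from pointwise estimates to the neighborhood formulation. Let $m:=\inf_{t}F(t)>0$ and $\eta:=\tfrac14\min(1-c,c-a)\,m>0$. Choose $\delta>0$ so small that $\omega_F(\delta)<\eta$; then for every $t\in\mathbb{R}$,
\[
|\Phi_\delta(t)-cF(t)|\le c\sup_{|u|\le\delta}|F(t-u)-F(t)|\le\omega_F(\delta)<\eta,
\]
which yields the uniform pointwise gaps
\[
\Phi_\delta(t)-aF(t)\ge(c-a)m-\eta\ge 3\eta,\qquad F(t)-\Phi_\delta(t)\ge(1-c)m-\eta\ge 3\eta.
\]
Now pick $\varepsilon>0$ with $\varepsilon<2\eta$ and $\omega_F(\varepsilon)<\eta$. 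For $|s_1|^2+|s_2|^2\le\varepsilon^2$ and any $t\in\mathbb{R}$, the triangle inequality together with $|\Phi_\delta(t+s_1)-\Phi_\delta(t)|\le\omega_F(|s_1|)<\eta$ gives
\[
\Phi_\delta(t+s_1)-aF(t)\ge 3\eta-\eta=2\eta>\varepsilon>s_2,
\]
so $aF<\Phi_\delta$. The inequality $\Phi_\delta<F$ follows identically, using $|F(t+s_1)-F(t)|\le\omega_F(|s_1|)<\eta$ in place of the bound on $\Phi_\delta$.

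The only mildly delicate step is the third one, i.e.\ translating the pointwise gaps into the required tubular-neighborhood separation, which is exactly where uniform continuity of $F$ (built into the definition of $\mathcal{F}(\mathbb{R})$) is used in an essential way; everything else is routine bookkeeping about mollifiers.
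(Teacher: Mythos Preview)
Your proof is correct. The paper itself does not prove this lemma but only cites \cite[Lemma 2.3]{Surjectivity}, so there is no in-paper argument to compare against; your mollification construction is the natural approach and in fact yields the stronger conclusion mentioned in the footnote, namely $\Phi\in\mathcal{F}(\mathbb{R})\cap C^\infty(\mathbb{R})$ with all derivatives bounded, since $\Phi_\delta^{(k)}=c\,(F\ast\rho_\delta^{(k)})$ is a convolution of a bounded function with an $L^1$ function for every $k$. One cosmetic point: in the final chain you write ``$\varepsilon>s_2$'', but since the ball is closed you only have $s_2\le\varepsilon$; the argument still goes through because you chose $\varepsilon<2\eta$ strictly.
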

\begin{remark}\label{stupid-remark}
$(i)$ Lemma \ref{lipschitz} particularly shows that $aF < F$ for all \(F \in \mathcal{F}(\mathbb{R})\) and \(a \in (0,1)\). \\
$(ii)$  Let \(F,G \in \mathcal{F}(\mathbb{R})\) with  \(F <G\). Then, there is $a \in (0,1)$ such that $F(t) \leq aG(t)$ for all $t \in \mathbb{R}$.
\end{remark}
Next, we discuss weight function systems.
\begin{definition}
 A function \(w \colon [0,\infty) \rightarrow  [0,\infty)\) is said to be a \emph{weight function} if it is non-decreasing and unbounded. A pointwise non-decreasing sequence  \( W = (w_N)_{N \in \mathbb{N}}\)  of weight functions is called a \emph{weight function system}.  
\end{definition}
We shall use the following conditions on a weight function system \(W = (w_N)_{N \in \mathbb{N}}\):
\textbf{\begin{enumerate}
    \item [\((\alpha)\)] \(\forall N \in \mathbb{N} \, \exists M \geq N, A > 1 \, \forall t\geq 0 : w_N(2t) \leq w_M(t) + \log A\).  
      \\
    \item [\((\epsilon)_0\)] \(\forall N \in \mathbb{N},  \mu > 0: \displaystyle{\int_0^{\infty} w_N(t) e^{-\mu t}\mathrm{d}t < \infty}\).
  \item [ \((N)\)] \(\forall N \in \mathbb{N}\, \exists M \geq N: \displaystyle{\int_{0}^{\infty} e^{w_N(t) - w_M(t)} \mathrm{d}t< \infty}\).
  \end{enumerate}}
The conditions $(\alpha)$ and $(N)$ are of a technical nature, while \((\epsilon)_0\) plays a more fundamental role; see Proposition \ref{nuclear} and Theorem \ref{Summary surjectivity} below. 

We are  ready to introduce the weighted spaces of scalar-valued smooth and holomorphic functions we shall deal with. Given \(X \subseteq \mathbb{C}\) open, we denote by \(\mathcal{H}(X)\) the space of  holomorphic functions on \(X\). 
\begin{definition}
 Let \(F, G \in \mathcal{F}(\mathbb{R})\) and let \(W =  (w_N)_{N \in \mathbb{N}}\) be a weight function system. We define $\mathcal{K}_{W}(T^{F,G})$ as the space consisting of all $\varphi \in C^{\infty}(T^{F,G})$ such that 
$$
\|\varphi\|_{N,\alpha,a}=\sup_{\xi \in \overline{T^{a F, a G}}} e^{w_N(\lvert \mathrm{Re \ }\xi\rvert)} |\varphi^{(\alpha)}(\xi)| < \infty, \qquad N \in \mathbb{N}, \alpha \in \mathbb{N}^2, a \in (0,1).
$$
We endow $\mathcal{K}_{W}(T^{F,G})$ with the locally convex topology generated by the system of seminorms $\{ \| \, \cdot \, \|_{N,\alpha,a} \mid N \in \mathbb{N}, \alpha \in \mathbb{N}^2, a \in (0,1) \}$.  We set
$$
\mathcal{U}_{W}(T^{F,G}) = \mathcal{K}_{W}(T^{F,G}) \cap \mathcal{H}(T^{F,G})
$$
and endow it 
 with the subspace topology induced by $\mathcal{K}_{W}(T^{F,G})$. 
 Note that \(\mathcal{K}_W(T^{F,G})\) and \(\mathcal{U}_W(T^{F,G})\) are both Fr\'echet spaces.
\end{definition}
We will also employ the following weighted space of entire functions.
\begin{definition}
Let \(W =  (w_N)_{N \in \mathbb{N}}\) be a weight function system. We define
$$
\mathcal{U}_W(\mathbb{C}) = \bigcap_{h > 0} \mathcal{U}_W(T_h).
$$
\end{definition}

We now give a useful derivative-free characterization of the Fr\'{e}chet space \(\mathcal{U}_W(T^{F,G})\).

\begin{lemma}\label{lemma-df}\cite[Lemma 2.8]{Surjectivity}
 Let \(W = (w_N)_{N \in \mathbb{N}}\) be a weight function system that satisfies \((\alpha)\) and let \(F, G \in \mathcal{F}(\mathbb{R})\). A function $\varphi \in \mathcal{H}(T^{F,G})$ belongs to 
 $\mathcal{U}_W(T^{F,G})$ if and only if 
$$
\|\varphi\|_{N,a}=  \sup_{\xi \in \overline{T^{a F, a G}}} e^{w_N(\lvert \mathrm{Re \ }\xi\rvert)} \lvert \varphi(\xi)\rvert < \infty, \qquad \forall N \in \mathbb{N}, a \in (0,1).
$$
Moreover, the topology of $\mathcal{U}_{W}(T^{F,G})$ is  generated by the system of norms $\{ \|\, \cdot \, \|_{N,a}\mid N \in \mathbb{N}, a \in (0,1)\}$.
\end{lemma}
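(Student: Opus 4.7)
Since $\|\varphi\|_{N,a} = \|\varphi\|_{N,0,a}$, one implication and the continuity of the derivative-free norms on $\mathcal{U}_W(T^{F,G})$ are immediate. For the converse it suffices to show that for every $N \in \mathbb{N}$, $\alpha \in \mathbb{N}^2$ and $a \in (0,1)$ there exist $M \geq N$, $b \in (a,1)$ and $C_{\alpha} > 0$ with
\[
\|\varphi\|_{N,\alpha,a} \leq C_{\alpha} \|\varphi\|_{M,b}, \qquad \varphi \in \mathcal{H}(T^{F,G}),
\]
as this yields both the characterization of the underlying set and the topological equivalence of the two systems of seminorms. The plan is to derive such a bound from the one-variable Cauchy integral formula combined with condition $(\alpha)$.

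First, given $a \in (0,1)$, pick $b \in (a,1)$. Applying Remark \ref{stupid-remark}$(i)$ to $bF$ and $bG$ with scale factor $a/b \in (0,1)$ yields $aF < bF$ and $aG < bG$. Unraveling the definition of $<$ then produces $r > 0$ such that $\overline{B}(\xi,r) \subseteq T^{bF,bG}$ for every $\xi \in \overline{T^{aF,aG}}$: writing $\xi = t + is$ and $w = \xi + \eta$ with $|\eta|\leq r$, the translate of the boundary point $t + iaF(t)$ by $\eta$ lies in $\{z : \operatorname{Im} z < bF(\operatorname{Re} z)\}$, so $\operatorname{Im} w \leq aF(t) + \operatorname{Im}\eta < bF(\operatorname{Re} w)$, and symmetrically for the lower boundary. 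Since $\varphi$ is holomorphic, $\varphi^{(\alpha)} = i^{\alpha_2}\varphi^{(|\alpha|)}$ (complex derivative), and the Cauchy formula on $\partial B(\xi,r)$ gives
\[
|\varphi^{(\alpha)}(\xi)| \leq \frac{|\alpha|!}{r^{|\alpha|}}\sup_{|w-\xi|=r}|\varphi(w)| \leq \frac{|\alpha|!}{r^{|\alpha|}}\|\varphi\|_{M,b}\sup_{|w-\xi|=r} e^{-w_M(|\operatorname{Re} w|)}
\]
for any $M \in \mathbb{N}$.

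To absorb the weight at $\xi$, pick $M \geq N$ and $A > 1$ from condition $(\alpha)$. For $|\operatorname{Re}\xi| \geq 2r$, monotonicity gives $|\operatorname{Re} w| \geq |\operatorname{Re}\xi| - r \geq |\operatorname{Re}\xi|/2$, so $(\alpha)$ yields
\[
w_N(|\operatorname{Re}\xi|) \leq w_M(|\operatorname{Re}\xi|/2) + \log A \leq w_M(|\operatorname{Re} w|) + \log A;
\]
for $|\operatorname{Re}\xi| < 2r$, we simply bound $w_N(|\operatorname{Re}\xi|) \leq w_N(2r)$ and $w_M(|\operatorname{Re} w|) \geq 0$. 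In either case $e^{w_N(|\operatorname{Re}\xi|) - w_M(|\operatorname{Re} w|)}$ is dominated by a constant depending only on $N$, $r$ and $A$, and multiplying the Cauchy estimate by $e^{w_N(|\operatorname{Re}\xi|)}$ yields the desired inequality. The main technical point is the weight bookkeeping in this last step, but condition $(\alpha)$ is tailored precisely for it; the geometric step producing $r$ is a routine consequence of Remark \ref{stupid-remark}$(i)$, so I anticipate no essential obstacle.
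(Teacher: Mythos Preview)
Your argument is correct. The paper does not actually prove this lemma; it is quoted verbatim from \cite[Lemma 2.8]{Surjectivity}, so there is no in-paper proof to compare against. Your approach---Cauchy estimates on a disk $\overline{B}(\xi,r)\subseteq T^{bF,bG}$ obtained from Remark \ref{stupid-remark}$(i)$, followed by the weight shift $w_N(|\operatorname{Re}\xi|)\leq w_M(|\operatorname{Re}\xi|/2)+\log A\leq w_M(|\operatorname{Re}w|)+\log A$ furnished by condition $(\alpha)$---is the natural and standard one, and is essentially what any proof of this statement must do.
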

Next, we discuss the role of the conditions \((N)\) and \((\epsilon_0)\).

\begin{proposition}\label{nuclear}
 Let \(F, G \in \mathcal{F}(\mathbb{R})\) and let  \(W\) be a weight function system that satisfies \((\alpha)\). The space \(\mathcal{K}_W(T^{F,G})\) is nuclear if and only if \(W\) satisfies \((N)\).
\end{proposition}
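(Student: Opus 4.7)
The plan is to use the standard factorization characterization of nuclearity for Fréchet spaces: $\mathcal{K}_W(T^{F,G})$ is nuclear if and only if, for every continuous seminorm $p$, there exists a larger continuous seminorm $q$ such that the canonical map between the associated local Banach spaces $\widehat{X}_q \to \widehat{X}_p$ is nuclear.

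For the sufficiency $(N)\Rightarrow$~nuclear, fix a seminorm $\|\cdot\|_{N,\alpha,a}$. Use $(N)$ to pick $M \geq N$ with $\int_0^\infty e^{w_N(t)-w_M(t)}\,dt < \infty$. Choose $a' \in (a,1)$ and, via Lemma \ref{lipschitz}, an intermediate smoothly bounded strip $T^{\Phi_1,\Phi_2}$ with $\overline{T^{aF,aG}}\subset T^{\Phi_1,\Phi_2}\subset \overline{T^{a'F,a'G}}$, together with a multi-index $\beta$ with $|\beta|>|\alpha|+2$. Using a smooth cutoff supported between the two boundaries combined with iterated antiderivatives (or an explicit convolution kernel), one obtains a representation
\[
\varphi^{(\alpha)}(\xi) = \int_{T^{\Phi_1,\Phi_2}} K(\xi,\eta)\,\varphi^{(\beta)}(\eta)\,dA(\eta), \qquad \xi \in \overline{T^{aF,aG}},
\]
with $K$ compactly supported in $|\mathrm{Re}(\xi-\eta)|\leq 1$ and locally bounded. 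Decomposing the integration domain along the real axis into unit slabs $S_k = \{z : k \le \mathrm{Re}\, z < k+1\}\cap T^{\Phi_1,\Phi_2}$, the inclusion $\widehat{X}_{M,\beta,a'}\to\widehat{X}_{N,\alpha,a}$ factors as a sum $\sum_{k\in\mathbb{Z}} T_k$ of operators whose images lie in a space of $C^{|\alpha|}$-functions on a fixed compact set (hence in a nuclear space) and whose norms satisfy $\|T_k\|\lesssim e^{w_N(|k|)-w_M(|k|)}$, after applying $(\alpha)$ to compare $w_N$ at nearby integers. The assumption $(N)$ gives $\sum_k \|T_k\|<\infty$, so the overall map is nuclear.

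For the necessity nuclear~$\Rightarrow (N)$, fix $N$ and $a \in (0,1)$, and by nuclearity choose $M \ge N$ such that the inclusion of the local Banach space for $\|\cdot\|_{M,0,a}$ into that for $\|\cdot\|_{N,0,a}$ is nuclear. Let $\chi \in C_c^\infty(\mathbb{C})$ be nontrivial with support in a ball around $0$ small enough to fit into $T^{aF,aG}-t$ for every $t\in\mathbb{R}$ (possible since $\inf F, \inf G > 0$), and set $\chi_k(z)=\chi(z-k)$ for $k\in\mathbb{N}$. Then $\|\chi_k\|_{N,0,a}\asymp e^{w_N(k)}$ and $\|\chi_k\|_{M,0,a}\asymp e^{w_M(k)}$ with constants independent of $k$. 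Since the $\chi_k$ have pairwise disjoint supports, for any finite set $F$ and signs $\epsilon_k$ we have $\|\sum_{k\in F}\epsilon_k\psi_k\|_{M,0,a}=1$, where $\psi_k=\chi_k/\|\chi_k\|_{M,0,a}$; this shows that $(\psi_k)$ is weakly $1$-summable in $\widehat{X}_{M,0,a}$. Since a nuclear map sends weakly summable sequences to absolutely summable ones, $\sum_k\|\iota\psi_k\|_{N,0,a}<\infty$, hence $\sum_k e^{w_N(k)-w_M(k)}<\infty$. Monotonicity of $w_N,w_M$ combined with $(\alpha)$ then lets us compare this sum with the integral and deduce $(N)$ (with $M$ replaced by a possibly larger index).

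The main obstacle will be the construction and estimation of the kernel $K$ in the sufficiency direction: it must be localized in the $\mathrm{Re}$-direction so that the $(N)$-integral controls the sum of the slab-wise operator norms, while simultaneously providing the regularity gain needed to express $\alpha$-derivatives in terms of $\beta$-derivatives, all the while respecting the non-rectangular geometry of $T^{F,G}$. Here Lemma \ref{lipschitz} (to insert a smoothly bounded intermediate strip) and the uniform continuity of $F,G$ (yielding uniform local control of the strip geometry) are essential.
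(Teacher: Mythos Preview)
The paper does not actually prove this proposition; it simply cites \cite[Theorems 5.1 and 5.3]{Nuclearity} and leaves the adaptation to the reader. Your plan is therefore a genuine attempt to supply what the paper omits, and it follows the standard blueprint for Gelfand--Shilov type spaces: localize via a tiling in the unbounded (real) direction, exploit the nuclearity of finite-order Sobolev embeddings on each tile, and use $(N)$ to sum. This is almost certainly close in spirit to the referenced argument.

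One point in the sufficiency direction needs more care. You write that the inclusion factors as $\sum_k T_k$ with $\|T_k\|\lesssim e^{w_N(|k|)-w_M(|k|)}$, but summability of \emph{operator} norms does not by itself make the sum nuclear; you need summability of the \emph{nuclear} norms $\nu(T_k)$. The way to secure this is to make the factorization explicit: write $T_k = A_k \circ \iota \circ B_k$, where $\iota$ is a single fixed nuclear embedding, say $C^{|\beta|}(Q)\hookrightarrow C^{|\alpha|}(Q)$ for $|\beta|>|\alpha|+2$ on a fixed rectangle $Q$ large enough to contain every translated slab (possible because $\sup F,\sup G<\infty$), and $A_k$, $B_k$ are the weighted restriction and extension maps carrying the factors $e^{\pm w_L(|k|)}$. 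Then $\nu(T_k)\le \|A_k\|\,\nu(\iota)\,\|B_k\|$, with $\nu(\iota)$ a fixed constant and $\|A_k\|\,\|B_k\|\lesssim e^{w_N(|k|)-w_M(|k|)}$ after using $(\alpha)$ to absorb the unit shift in the weight. Your parenthetical about images lying ``in a nuclear space'' gestures at this, but as written it does not close the gap.

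The necessity argument via disjointly supported translated bumps is correct: disjoint supports give $\bigl\|\sum_{k\in F}\epsilon_k\psi_k\bigr\|_{M,0,a}=1$, hence weak $1$-summability, and nuclear (hence absolutely summing) maps send this to an absolutely summable sequence. The passage from $\sum_k e^{w_N(k)-w_M(k)}<\infty$ to the integral condition does require replacing $M$ by a larger index via $(\alpha)$, exactly as you indicate in your final clause.
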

\begin{proof}
This can be shown in the same way as \cite[Theorems 5.1 and 5.3]{Nuclearity}, with obvious modifications; the details are left to the reader.
\end{proof}

The following result is the main theorem from  \cite{Surjectivity}.
\begin{theorem}  \cite[Theorem 3.1]{Surjectivity}\label{Summary surjectivity}  Let \(F, G \in \mathcal{F}(\mathbb{R})\)  and let \(W\) be a weight function system that satisfies \((\alpha)\) and  \((N)\). The following statements are equivalent:
       \begin{itemize} 
        \item  The Cauchy--Riemann operator
        \(\overline{\partial} \colon \mathcal{K}_W(T^{F,G}) \rightarrow \mathcal{K}_W(T^{F,G})\) is surjective.
        \item  The space \(\mathcal{U}_W(T^{F,G})\) is non-trivial. 
        \item \(W\) satisfies \((\epsilon)_0\).
        \end{itemize}
\end{theorem}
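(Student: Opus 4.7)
My plan is to prove the equivalence via the four implications $(iii) \Rightarrow (ii)$, $(ii) \Rightarrow (iii)$, $(iii) \Rightarrow (i)$, and $(i) \Rightarrow (ii)$, with the main analytic content in $(iii) \Rightarrow (i)$.

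For $(iii) \Rightarrow (ii)$, I would use a Beurling-Björck-type construction: under $(\epsilon)_0$, one produces $\phi \in C_c^\infty([-h,h])$ with $h < \min(\inf F, \inf G)$ satisfying $|\hat\phi(\xi)| \leq C_N e^{-w_N(|\xi|)}$ for every $N$. The Fourier-Laplace transform $f(z) = \int \phi(t)\, e^{-itz}\, dt$ is then entire, dominated by $e^{h|y|}\|\phi\|_{L^1}$ for $z = x+iy$, and inherits the decay $|f(x+iy)| \leq C_N e^{-w_N(|x|)}$ for $|y| \leq h$, so $f \in \mathcal{U}_W(T^{F,G}) \setminus \{0\}$. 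For $(ii) \Rightarrow (iii)$, take a nonzero $f \in \mathcal{U}_W(T^{F,G})$ and choose $z_0$ inside a horizontal sub-strip $H$ of height $\delta < \min(\inf F, \inf G)$ with $f(z_0) \neq 0$. A Poisson integral representation of the subharmonic function $\log|f|$ on $H$ (via the conformal map to the upper half-plane), combined with $\log|f(x)| \leq -w_N(|x|) + C_N$ on the real line and $\log|f(z_0)| > -\infty$, yields
\[
\int_0^{\infty} w_N(t)\, e^{-\mu_0 t}\, dt < \infty \qquad \text{for } \mu_0 = \pi/(2\delta).
\]
Iterating condition $(\alpha)$, which gives $w_N(2^k t) \leq w_{M^{(k)}(N)}(t) + O(k)$, and rescaling the integration variable extends integrability to all $\mu > 0$, yielding $(\epsilon)_0$.

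The main obstacle is $(iii) \Rightarrow (i)$. Given $f \in \mathcal{K}_W$, the plan is to solve $\overline{\partial} u = f$ with $u \in \mathcal{K}_W$ by Hörmander's $L^2$-method on the (trivially pseudoconvex) strip $T^{F,G}$, using a plurisubharmonic weight of the form $\phi(z) = c\, w_N(|\mathrm{Re}\, z|) + \lambda \log(1+|z|^2)$, regularized so that $\Delta \phi$ is bounded below. This yields an $L^2$-estimate $\int |u|^2 e^{-\phi}(1+|z|^2)^{-2}\, dA \leq C \int |f|^2 e^{-\phi}\, dA$. Condition $(\epsilon)_0$ mediates between the $L^\infty$-seminorms defining $\mathcal{K}_W$ and these weighted $L^2$-norms, via the integrability $\int e^{-2w_N(|x|)}(1+|z|^2)^k\, dA < \infty$ for large $N$, while interior Cauchy estimates on small disks upgrade $L^2$-regularity to the $L^\infty$-bounds required on slightly shrunken sub-strips. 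Condition $(N)$ handles derivative losses in this transfer, and the delicate step is adapting the weight to the curved boundary of $T^{F,G}$, where Lemma \ref{lipschitz} (permitting $\mathcal{F}_1$-approximation of $F, G$) is essential.

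For $(i) \Rightarrow (ii)$, surjectivity together with the open mapping theorem yields a continuous right inverse $K \colon \mathcal{K}_W \to \mathcal{K}_W$ of $\overline{\partial}$. Applying $K$ to $\chi_\epsilon^{(0)} - \chi_\epsilon^{(1)}$, where $\chi_\epsilon^{(j)}$ is a mollifier at $z_j \in T^{F,G}$ with $\int \chi_\epsilon^{(j)} = 1$, and comparing with the Cauchy-Pompeiu representation $K(\chi_\epsilon^{(0)} - \chi_\epsilon^{(1)})(z) = [(\pi(z-z_0))^{-1} - (\pi(z-z_1))^{-1}] + H_\epsilon(z) + o(1)$ with $H_\epsilon$ holomorphic on $T^{F,G}$, the rapid decay of the left-hand side in $\mathcal{K}_W$ forces the limit $H = \lim_\epsilon H_\epsilon$ to satisfy $H(z) + (\pi(z-z_0))^{-1} - (\pi(z-z_1))^{-1} \in \mathcal{K}_W$ away from $z_0, z_1$. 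A further subtraction using a third point $z_2$ eliminates the remaining singularities and yields a nonzero element of $\mathcal{U}_W(T^{F,G})$.
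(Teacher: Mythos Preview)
The paper does not prove this theorem: it is quoted verbatim as \cite[Theorem 3.1]{Surjectivity} and no argument is given here, so there is no in-paper proof to compare against. That said, your sketch contains a genuine error and one questionable step that you should be aware of.

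In $(i)\Rightarrow(ii)$ you write that ``surjectivity together with the open mapping theorem yields a continuous right inverse $K$''. This is false: the open mapping theorem only tells you that a surjective continuous linear map between Fr\'echet spaces is open, not that the short exact sequence $0\to\mathcal{U}_W\to\mathcal{K}_W\xrightarrow{\overline\partial}\mathcal{K}_W\to 0$ splits. The existence of a continuous linear right inverse is a much stronger property (equivalent to the kernel being complemented), and it typically fails for $\overline\partial$ on spaces of this kind. Your subsequent argument with mollifiers and Cauchy--Pompeiu does not actually use linearity or continuity of $K$ in an essential way, but as written it is circular: you invoke ``rapid decay of the left-hand side in $\mathcal{K}_W$'' to force properties of the holomorphic correction, yet the Cauchy kernel pieces $(\pi(z-z_j))^{-1}$ only decay like $1/|z|$, which is far slower than any $e^{-w_N}$, so the conclusion that the resulting combination lies in $\mathcal{U}_W$ does not follow.

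For $(iii)\Rightarrow(ii)$, the Beurling--Bj\"orck construction you outline requires $\phi\in C_c^\infty$ with $|\hat\phi(\xi)|\le C_N e^{-w_N(|\xi|)}$ for all $N$. The classical condition for this is $\int w_N(t)/(1+t^2)\,dt<\infty$, which is strictly stronger than $(\epsilon)_0$; indeed $(\epsilon)_0$ permits weights such as $w_N(t)=e^{t^a}$ with $0<a<1$, for which no compactly supported smooth $\phi$ has Fourier transform decaying that fast. The non-triviality under $(\epsilon)_0$ therefore needs a different construction (e.g.\ via infinite products of entire functions of exponential type, as is done for Fourier ultrahyperfunctions), and the role of $(N)$ here is not merely technical. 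Your other two implications are plausible in outline.
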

\begin{remark}\label{remark non-triviality}
Let \(W\) be a weight function system that satisfies \((\alpha)\) and  \((N)\). In addition to Theorem \ref{Summary surjectivity}, we remark that \(\mathcal{U}_W(\mathbb{C})\) is non-trivial if and only if  \(W\) satisfies \((\epsilon)_0\)  \cite[Remark 3.8]{Surjectivity}.
\end{remark}
Finally, we define the weighted spaces of vector-valued smooth functions we shall be interested in.\begin{definition}
Let \(F, G \in \mathcal{F}(\mathbb{R})\), let \(W =  (w_N)_{N \in \mathbb{N}}\) be a weight function system, and let \(E\) be a lcHs. We write  \(\mathcal{P}_E\)  for the family of continuous seminorms on $E$. We define $\mathcal{K}_{W}(T^{F,G};E)$ as the space consisting of all $\mathbf{f} \in C^{\infty}(T^{F,G};E)$ such that 
$$
p_{N,\alpha,a}(\mathbf{f})=\sup_{\xi \in \overline{T^{a F, a G}}} e^{w_N(\lvert \mathrm{Re \ }\xi\rvert)} p(\mathbf{f}^{(\alpha)}(\xi)) < \infty, \qquad \forall  p \in \mathcal{P}_E, N \in \mathbb{N}, \alpha \in \mathbb{N}^2, a \in (0,1).
$$
We endow $\mathcal{K}_{W}(T^{F,G};E)$ with the locally convex topology generated by the system of seminorms $\{ p_{N,\alpha,a} \mid p \in \mathcal{P}_E, N \in \mathbb{N}, \alpha \in \mathbb{N}^2, a \in (0,1)\}$. 
\end{definition}
We now provide an $\varepsilon$-product representation for the space  $\mathcal{K}_W(T^{F,G};E)$. To this end, we employ the following lemma from \cite{BFJ}.
\begin{lemma}\cite[Proposition 7]{BFJ} \label{lemma Bonet}
    Let \(Y\) be a Fréchet-Schwartz space, let \(X\subseteq Y_c^{\prime}\) be a dense subspace, and let \(E\) be a locally complete lcHs. For  a linear map \(T\colon X\rightarrow E\),  there exists a (unique) extension \(\widehat{T}\in L(Y_c^{\prime},E)\) of \(T\) if and only if  \(T^t(E^\prime)\subseteq Y\).
\end{lemma}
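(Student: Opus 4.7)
The plan is as follows. Uniqueness of $\widehat{T}$ is immediate, since $X$ is dense in $Y_c'$ and $E$ is Hausdorff. The ``only if'' direction is direct: as $Y$ is a Fréchet-Schwartz space, it is reflexive and $Y_c' = Y_\beta'$, so $(Y_c')' = Y$ as vector spaces. The transpose $\widehat{T}^t \colon E' \to (Y_c')' = Y$ is thus well defined, and for $x \in X$ we have $\widehat{T}^t(e')(x) = e'(\widehat{T}(x)) = e'(T(x)) = T^t(e')(x)$, so $T^t(e')$ extends to an element of $Y$, i.e.\ $T^t(E') \subseteq Y$.

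For the ``if'' direction, assume $T^t(E') \subseteq Y$. Each $e' \in E'$ then gives a unique $y_{e'} \in Y \subseteq (Y_c')'$ with $\langle y_{e'}, x\rangle = e'(T(x))$ for $x \in X$, and this yields a linear map $S \colon E' \to Y$, $e' \mapsto y_{e'}$. I would construct $\widehat{T}(y')$ for $y' \in Y_c'$ by choosing $x_k \in X$ with $x_k \to y'$ in $Y_c'$ and setting $\widehat{T}(y') = \lim_k T(x_k)$. The essential structural input is that $Y_c'$ is a (DFS) space: every convergent sequence in $Y_c'$ is Mackey convergent, i.e.\ it lies in some Banach disk $B \subseteq Y_c'$ and converges in the norm $\|\cdot\|_B$.

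The key estimate is
\[
\lvert \langle e', T(x_k) - T(x_\ell) \rangle \rvert = \lvert \langle y_{e'}, x_k - x_\ell \rangle \rvert \leq q_B(y_{e'})\, \|x_k - x_\ell\|_B,
\]
where $q_B(y) = \sup_{y' \in B} \lvert \langle y, y' \rangle \rvert$ is a continuous seminorm on $Y$. To upgrade this to Mackey-Cauchy for $(T(x_k))$ in $E$, I would first establish that $S$ sends every equicontinuous subset of $E'$ to a bounded subset of the Fréchet space $Y$; this is where the barrelledness of $Y$ and the Banach--Steinhaus theorem enter, together with the density of $X$ in $Y_c'$ exploited via Mackey convergence. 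Granting this, for each continuous seminorm $p$ on $E$ with polar $U_p^\circ \subseteq E'$,
\[
p(T(x_k) - T(x_\ell)) = \sup_{e' \in U_p^\circ} \lvert \langle e', T(x_k) - T(x_\ell) \rangle \rvert \leq C_p\, \|x_k - x_\ell\|_B \to 0,
\]
and a refinement, using the absolutely convex set $D = \{ e \in E : \lvert \langle e', e \rangle \rvert \leq q_B(S(e')) \ \forall e' \in E'\}$ (which the boundedness of $S$ makes bounded in $E$), shows that $(T(x_k))$ is actually Mackey Cauchy. Local completeness of $E$ then furnishes the limit $\widehat{T}(y') \in E$, which is independent of the approximating sequence by an analogous estimate. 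Linearity of $\widehat{T}$ is clear, and continuity of $\widehat{T} \colon Y_c' \to E$ follows since polars of bounded sets in $Y$ form a neighborhood basis of $0$ in $Y_c'$.

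The main obstacle I anticipate is the automatic boundedness step: showing that $S \colon E' \to Y$ maps each equicontinuous set of $E'$ to a bounded set of $Y$. The definition of $S$ only encodes pointwise identities on $X$, so translating this into uniform boundedness on all of $Y$ requires a careful combination of the (DFS) presentation of $Y_c'$ (to reduce the bound to a single Banach disk), the density of $X$ (to transfer pointwise estimates from $X$ to arbitrary $y' \in Y_c'$), and Banach--Steinhaus in the barrelled Fréchet space $Y$. Once this is in place, the remainder is routine manipulation with polar duality and local completeness.
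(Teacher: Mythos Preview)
The paper does not prove this lemma; it is quoted directly from \cite[Proposition~7]{BFJ}, so there is no argument in the paper to compare against.

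Your outline is essentially correct and can be completed into a valid proof. Two points deserve attention.

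First, you tacitly assume that every $y' \in Y_c'$ is the limit of a \emph{sequence} from the dense subspace $X$. Density alone does not guarantee this in an arbitrary locally convex space. It does hold here because $Y_c'$, being a (DFS) space, is sequential (compact sets are metrizable and the space is a $k$-space), so dense subspaces are sequentially dense; but this should be said explicitly. Mackey convergence of such a sequence then follows, as you note, from the (DFS) structure.

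Second, the boundedness step you flag as the main obstacle can be closed without circularity, \emph{once sequential density is available}. Given an equicontinuous $H = U_p^\circ \subseteq E'$ and any $y' \in Y_c'$, pick $x_k \in X$ with $x_k \to y'$; then $(x_k)$ is bounded in $Y_c'$. For each fixed $e' \in E'$, the scalars $e'(T(x_k)) = \langle S(e'), x_k\rangle$ are bounded in $k$, since $S(e') \in Y$ is a continuous functional on $Y_c'$ evaluated on a bounded sequence. Hence $(T(x_k))$ is weakly bounded in $E$, so bounded by Mackey's theorem; in particular $\sup_k p(T(x_k)) < \infty$. Then
\[
\sup_{e' \in H} \lvert\langle S(e'), y'\rangle\rvert \;=\; \sup_{e' \in H} \lim_k \lvert e'(T(x_k))\rvert \;\leq\; \sup_k p(T(x_k)) \;<\; \infty,
\]
so $S(H)$ is weakly bounded, hence bounded, in $Y$. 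With this in hand, the remainder of your argument (Mackey--Cauchy via the set $D$, local completeness, and continuity of $\widehat{T}$) goes through as written.
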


\begin{lemma}\label{epsilon-rep}
Let \(F,G \in \mathcal{F}(\mathbb{R})\),  let \(W=(w_N)_{N\in\mathbb{N}}\) be a weight function system, and let \(E\) be a locally complete lcHs. 
\begin{itemize}
\item[$(i)$] For all $f \in C^{\infty}(T^{F,G};E)$ it holds that
\begin{equation}\label{exchange derivative}
\langle x', \mathbf{f} \rangle^{(\alpha)} = \langle x', \mathbf{f}^{(\alpha)} \rangle, \qquad \forall \alpha \in \mathbb{N}^2, x' \in E'.
\end{equation}
\item[$(ii)$]  A function $\mathbf{f}: T^{F,G}\rightarrow E$ belongs to   $\mathcal{K}_W(T^{F,G};E)$ if and only if   $\langle x', \mathbf{f} \rangle = x' \circ \mathbf{f}:  T^{F,G}\rightarrow \mathbb{C}$ belongs to $\mathcal{K}_W(T^{F,G})$ for all $x' \in E^{\prime}$.
\item[$(iii)$] Let \(W\) satisfy \((\alpha)\) and \((N)\). The map 
\[ \begin{gathered}
\Phi\colon\mathcal{K}_W(T^{F,G};E) \to  \mathcal{K}_W(T^{F,G}) \varepsilon E = L(E'_c, \mathcal{K}_W(T^{F,G})), \,\mathbf{f} \mapsto (x' \mapsto \langle x', \mathbf{f} \rangle)
\end{gathered}
\]
is an isomorphism (of vector spaces).
\end{itemize}
\end{lemma}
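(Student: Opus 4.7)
The three parts are handled in order, with (iii) reducing essentially to (ii).

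For (i), I would proceed by induction on $|\alpha|$. The base case is the definition of $\langle x', \mathbf{f} \rangle$. For the inductive step $\alpha \mapsto \alpha + e_j$, the derivative $\mathbf{f}^{(\alpha + e_j)}(\xi)$ is the $E$-limit of the difference quotient $h^{-1}(\mathbf{f}^{(\alpha)}(\xi + h e_j) - \mathbf{f}^{(\alpha)}(\xi))$, and since $x' \in E'$ is continuous and linear it commutes with this limit; combined with the inductive identity $\langle x', \mathbf{f}^{(\alpha)}\rangle = \partial^\alpha \langle x', \mathbf{f}\rangle$, one obtains \eqref{exchange derivative} at level $\alpha + e_j$.

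For (ii), the ``only if'' direction is immediate: any $x' \in E'$ satisfies $|x'(v)| \leq p(v)$ for some $p \in \mathcal{P}_E$, and (i) converts the $\mathcal{K}_W(T^{F,G};E)$-seminorm estimate into the corresponding scalar estimate $e^{w_N(|\mathrm{Re}\,\xi|)}|\partial^\alpha\langle x', \mathbf{f}\rangle(\xi)| \leq p_{N,\alpha,a}(\mathbf{f})$. The ``if'' direction is the main task and amounts to a weak-to-strong smoothness argument. I would consider the linear map $T \colon E' \to \mathcal{K}_W(T^{F,G})$, $T(x') = \langle x', \mathbf{f}\rangle$, and first show that it is continuous from $E'_c$ into $\mathcal{K}_W(T^{F,G})$ via a closed graph argument ($\sigma(E', E)$-continuity of $T$ is automatic from $T(x')(\xi) = \langle x', \mathbf{f}(\xi)\rangle$). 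Once $T$ is continuous, for each $\xi$ and $\alpha$ the functional $\delta_\xi \circ \partial^\alpha \circ T$ lies in $(E'_c)'$; and $(E'_c)' = E$ by Mackey--Arens (absolutely convex compact subsets of $E$ are $\sigma(E, E')$-compact), producing a unique $\mathbf{g}_\alpha(\xi) \in E$ with $\langle x', \mathbf{g}_\alpha(\xi)\rangle = \partial^\alpha\langle x', \mathbf{f}\rangle(\xi)$. To upgrade $\mathbf{f}$ to strongly $C^\infty$ with $\mathbf{f}^{(\alpha)} = \mathbf{g}_\alpha$, I would form the Riemann--Pettis integrals $\int_0^h \mathbf{g}_{\alpha + e_j}(\xi + s e_j)\,ds \in E$ (well-defined because $E$ is locally complete, after verifying continuity of $\mathbf{g}_{\alpha + e_j}$) and test the scalar fundamental theorem of calculus against each $x' \in E'$ to conclude $\mathbf{g}_\alpha(\xi + h e_j) - \mathbf{g}_\alpha(\xi) = \int_0^h \mathbf{g}_{\alpha + e_j}(\xi + s e_j)\, ds$, hence differentiability in $E$. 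The required seminorm bounds on $\mathbf{f}$ then follow from the continuity of $T$ via a bipolar argument: for a continuous seminorm $p$ on $E$ with unit ball $U$, the polar $U^\circ \subset E'$ is bounded in $E'_c$, so $T(U^\circ)$ is bounded in $\mathcal{K}_W(T^{F,G})$, yielding $p_{N,\alpha,a}(\mathbf{f}) = \sup_{x' \in U^\circ}\|T(x')\|_{N,\alpha,a} < \infty$.

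Part (iii) follows quickly. The map $\Phi$ is linear and well-defined by (i) and (ii), and injective because $E'$ separates points. For surjectivity, given $T \in L(E'_c, \mathcal{K}_W(T^{F,G}))$, the composition $\delta_\xi \circ T \in (E'_c)' = E$ provides $\mathbf{f}(\xi) \in E$ with $\langle x', \mathbf{f}(\xi)\rangle = T(x')(\xi)$; then $\langle x', \mathbf{f}\rangle = T(x') \in \mathcal{K}_W(T^{F,G})$ for every $x' \in E'$, so (ii) yields $\mathbf{f} \in \mathcal{K}_W(T^{F,G}; E)$ and clearly $\Phi(\mathbf{f}) = T$. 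The main obstacle throughout is the weak-to-strong smoothness step in (ii): lifting weakly $C^\infty$ behaviour (with $\mathcal{K}_W$-scalar bounds) to genuine $E$-valued smoothness with the $E$-seminorm estimates. Local completeness of $E$ is essential here for the Riemann--Pettis integrals, while the nuclearity of $\mathcal{K}_W(T^{F,G})$ granted by $(\alpha)$ and $(N)$ via Proposition~\ref{nuclear} supplies the compactness needed so that the polar-duality and closed graph arguments go through.
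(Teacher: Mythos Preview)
Your overall plan is sensible, but the closed graph step in (ii) is a genuine gap, and it propagates into (iii).

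In (ii) you propose to show that $T\colon E'_c\to\mathcal{K}_W(T^{F,G})$, $T(x')=\langle x',\mathbf{f}\rangle$, is continuous via the closed graph theorem. The graph is indeed closed, but no standard closed graph theorem applies: De Wilde's version needs the \emph{domain} to be ultrabornological, and for a general locally complete $E$ the space $E'_c$ need not even be bornological. Nuclearity of the \emph{codomain} does not help---and in any case $(\alpha)$ and $(N)$ are assumed only in (iii), not in (ii). Your later steps (producing $\mathbf{g}_\alpha(\xi)$ through $(E'_c)'=E$, and the bipolar bound $p_{N,\alpha,a}(\mathbf{f})=\sup_{x'\in U^\circ}\|T(x')\|_{N,\alpha,a}<\infty$) both hinge on this continuity; without it the bipolar computation just rewrites the quantity you are trying to bound. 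The paper avoids $T$ altogether: it quotes the known fact (from \cite{BFJ}) that for locally complete $E$ a function is strongly $C^\infty$ as soon as it is scalarly $C^\infty$, and then gets the seminorm bounds directly from Mackey's theorem applied to the sets $\{e^{w_N(|\mathrm{Re}\,z|)}\mathbf{f}^{(\alpha)}(z):z\in\overline{T^{aF,aG}}\}\subseteq E$, which are weakly bounded by hypothesis.

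For (iii), the non-trivial point is exactly the \emph{well-definedness} of $\Phi$: knowing $\langle x',\mathbf{f}\rangle\in\mathcal{K}_W(T^{F,G})$ for each $x'$ (which is what (ii) gives) does not yet yield continuity of $x'\mapsto\langle x',\mathbf{f}\rangle$ from $E'_c$ into $\mathcal{K}_W(T^{F,G})$. The paper handles this by passing to the other side of the $\varepsilon$-product via Lemma~\ref{lemma Bonet}: one defines a map on $\operatorname{span}\{\delta_z\}\subset(\mathcal{K}_W(T^{F,G}))'$ by $\delta_z\mapsto\mathbf{f}(z)$, checks that its transpose lands in $\mathcal{K}_W(T^{F,G})$ (this is (ii)), and the extension lemma---\emph{this} is where the Fr\'echet--Schwartz hypothesis, hence nuclearity via $(\alpha)$ and $(N)$, is actually used---produces an element of $L((\mathcal{K}_W(T^{F,G}))'_c,E)$ whose transpose is $\Phi(\mathbf{f})$. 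Your surjectivity argument for (iii), on the other hand, is correct and in fact slightly more direct than the paper's (which routes through the $C^\infty$ $\varepsilon$-product isomorphism).
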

\begin{proof}
Statement $(i)$ is clear, so we continue to show $(ii)$. As $E$ is locally complete, it holds that $\mathbf{f}: T^{F,G}\rightarrow E$ belongs to   $C^\infty(T^{F,G};E)$ if and only if   $\langle x', \mathbf{f} \rangle \in C^\infty(T^{F,G})$ for all $x' \in E^{\prime}$ \cite[p.~4]{BFJ}.
Observe that \(\mathbf{f}\in C^{\infty}(T^{F,G};E)\) belongs to \(\mathcal{K}_W(T^{F,G};E)\) if and only if \(\left\{e^{w_N(\lvert \mathrm{Re}z\rvert)} {\mathbf{f}}^{(\alpha)}(z) \mid z \in \overline{T^{aF,aG}}\right\}\subseteq E\) is bounded for all \(a\in (0,1)\), \(N\in\mathbb{N}\), and \(\alpha \in \mathbb{N}^2\). Hence, the result follows from $(i)$ and the fact that a set in $E$ is bounded if and only if it is weakly bounded (Mackey's theorem). 
Next, we prove \((iii)\). We first show that $\Phi$ is well-defined. Take an arbitrary \(\mathbf{f}\in \mathcal{K}_W(T^{F,G};E)\). Define the vector space \(X=\mathrm{span} \{\delta_z \mid z \in T^{F,G}\} \subseteq (\mathcal{K}_W(T^{F,G}))^{\prime}\) and let \(T\colon X \rightarrow E\) be the unique linear map with \(T(\delta_z)=\mathbf{f}(z)\) for all \(z\in T^{F,G}\). We claim  that \(T\) extends to an element \(\widehat{T}\in L((\mathcal{K}_W(T^{F,G}))^{\prime}_c,E)\). Then, \(\widehat{T}^t\in L(E^{\prime}_c,\mathcal{K}_W(T^{F,G}))\) \cite[p.~657]{K-Ultradistributions3}. As \(\widehat{T}^t(x^{\prime})=\langle x^{\prime},\mathbf{f}\rangle\) for all \(x^{\prime}\in E^\prime\), this would imply that \(\Phi\) is well-defined. We now show the claim by verifying the assumptions of Lemma \ref{lemma Bonet}. Proposition \ref{nuclear}  yields that the space \(\mathcal{K}_W(T^{F,G})\) is nuclear and thus Schwartz. Since the dual of \((\mathcal{K}_W(T^{F,G}))_c^{\prime}\) is \(\mathcal{K}_W(T^{F,G})\) \cite[p.~16]{S-TheorieDistValeurVect}, the Hahn-Banach theorem implies that \(X\) is dense in \((\mathcal{K}_W(T^{F,G}))_c^{\prime}\).  As \(T^t(x^{\prime})=\langle x^{\prime},\mathbf{f}\rangle\in\mathcal{K}_W(T^{F,G})\) for all \(x^{\prime}\in E^{\prime}\),  the inclusion \(T^t(E^{\prime})\subseteq \mathcal{K}_W(T^{F,G})\) follows from \((ii)\). Next, we show  that \(\Phi\) is a bijection. The map
\[
 C^{\infty}(T^{F,G};E)\rightarrow C^{\infty}(T^{F,G})\varepsilon E= L(E^{\prime}_c,C^{\infty}(T^{F,G}))\colon \mathbf{f}\mapsto (x^{\prime}\mapsto \langle x^{\prime},\mathbf{f}\rangle),
\]
 is an isomorphism of vector spaces \cite[p.~4]{BFJ}.  Hence, the result follows from $(ii)$.
\end{proof}
\subsection{Linear topological invariants} We start with the definition of  the condition $(\Omega)$ for Fr\'echet spaces \cite{Meise-Vogt,V-FuncExt1Frechet}.
\begin{definition}\label{Omega}
Let \(F\) be a Fréchet space with a  fundamental  non-decreasing sequence of seminorms \((\|\cdot\|_n)_{n\in\mathbb{N}}\). We say that \(F\) satisfies \((\Omega)\) if  
\[
\begin{gathered}
    \forall n \in \mathbb{N} \, \exists m \geq n \, \forall k \geq m \, \exists \theta \in (0,1), C > 0: \\
     \|x'\|^*_m\leq C(\|x'\|^*_n)^{1-\theta} (\|x'\|^*_k)^{\theta}, \qquad \forall x' \in F'.
\end{gathered}
\]
\end{definition}
    Next, we introduce the condition \((A)\)  for lcHs with a fundamental sequence of bounded sets \cite{V-VektorDistrRandHolomorpherFunk}. \begin{definition}\label{DEFA}
Let  $E$ be a lcHs with a fundamental non-decreasing sequence  of absolutely convex bounded sets $(B_{n})_{n \in \mathbb{N}}$. We say that $E$ satisfies  \((A)\) if
			\[
			\begin{gathered}
				\exists n \in \mathbb{N} \, \forall m \geq n,  s > 0 \, \exists k \geq m, C > 0 :  \\
				B_{m} \subseteq  r B_{n} + \frac{C}{r^{s}} B_{k}, \qquad \forall r > 0.
				\end{gathered}
			\]
	\end{definition}
	
	\begin{remark} \label{remark-DN}
	The condition \((A)\)  is closely related to the condition $(DN)$ for Fr\'echet spaces  \cite{Meise-Vogt,V-FuncExt1Frechet}. A Fr\'{e}chet space $F$ with a fundamental non-decreasing sequence of seminorms $(\| \,\cdot \,\|_{n})_{n \in \mathbb{N}}$ is said to satisfy $(DN)$ if 
							\[
	\begin{gathered}
						\exists n \in \mathbb{N} \, \forall m \geq n,  \theta \in (0, 1) \, \exists k \geq m, C > 0  : \\  \|x\|_{m} \leq C \|x\|^{\theta}_{n} \|x\|^{1 - \theta}_{k}, \qquad \forall x \in F.
					\end{gathered}
					\]
			A Fr\'echet space $F$ satisfies $(DN)$ if and only if $F^{\prime}$ satisfies $(A)$ (cf.\ \cite[Proposition 2.2]{B-D-V-InterpolVVRealAnalFunc}).
	\end{remark}

We will use the following result about the surjectivity of $\varepsilon$-tensorized mappings from \cite{D-N}, which stems from the work of Vogt \cite{V-TensorFundDFRaumFortsetz}. 

\begin{theorem}\label{surjtensor}\cite[Theorem 6.3]{D-N}
   Let \(F\) and \(G\) be two nuclear Fréchet spaces and let \(S\colon F \rightarrow G\) be a surjective continuous linear map. Let $E$ be a locally complete lcHs with a fundamental sequence of bounded sets.
If \(\mathrm{ker} S\) satisfies \((\Omega)\) and $E$ satisfies  \((A)\), then the  tensorized map 
   \[
S \varepsilon \mathrm{id}_E \colon F  \varepsilon E \rightarrow G\varepsilon E 
   \]
is surjective.
\end{theorem}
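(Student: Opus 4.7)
Since $F\varepsilon E = L(E'_c, F)$, $G\varepsilon E = L(E'_c,G)$, and $S\varepsilon \mathrm{id}_E$ is post-composition with $S$, the statement is equivalent to saying that every $u\in L(E'_c,G)$ admits a continuous linear lift $v\in L(E'_c,F)$ with $S\circ v = u$. Set $K:=\ker S$, so that $0\to K\to F\to G\to 0$ is a short exact sequence of nuclear Fr\'echet spaces. Fix a non-decreasing fundamental sequence of seminorms $(\|\cdot\|^F_n)$ on $F$, let $(\|\cdot\|^K_n)$ be its restriction to $K$, and let $\|\cdot\|^G_n$ denote the associated quotient seminorm on $G=F/K$, which generates the topology of $G$ by the open mapping theorem. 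Let $(B_n)$ be the fundamental sequence of absolutely convex bounded sets in $E$, with dual seminorms $\|x'\|^*_n := \sup_{x\in B_n}|\langle x',x\rangle|$.

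Both hypotheses translate into log-convex interpolation inequalities on these dual seminorm systems. Condition $(\Omega)$ on $K$ is literally such an estimate on $K'$, by Definition \ref{Omega}. For $(A)$ on $E$, polar duality applied to $B_m\subseteq rB_n+(C/r^s)B_k$ yields $\|x'\|^*_m \leq r\|x'\|^*_n + (C/r^s)\|x'\|^*_k$ for all $x'\in E'$ and all $r>0$; optimizing in $r$ gives the same $(DN)$-type log-convex inequality for the $\|\cdot\|^*_n$. Moreover, a linear map $w:E'\to F$ belongs to $L(E'_c,F)$ as soon as it satisfies estimates of the form $\|w(x')\|^F_n \leq C_n\|x'\|^*_{\tau(n)}$ for some $\tau:\mathbb{N}\to\mathbb{N}$, because every absolutely convex compact subset of $E$ is contained in some $B_n$. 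So the problem becomes to produce a linear $v:E'\to F$ with $Sv=u$ and such $\|\cdot\|^*$-bounds.

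I would then carry out a Mittag-Leffler style iteration. Start from any linear (not necessarily continuous) lift $v_0:E'\to F$ of $u$, obtained by fixing a Hamel basis of $E'$ and lifting each basis element through the surjection $S$, using the quotient description of $\|\cdot\|^G_n$ to control the $F$-norm at one chosen base level. Then inductively construct linear correction maps $c_j:E'\to K$ and set $v_{j+1}:=v_j+c_j$, with $c_j$ designed so that $v_{j+1}$ acquires the continuity estimate $\|v_{j+1}(x')\|^F_j \leq C_j\|x'\|^*_{\tau(j)}$ on all of $E'$. Each $c_j$ is produced by applying the $(\Omega)$-inequality to the values $v_j(x')\in K$ measured in the $K'$-dual norms at a triple $(n,m,k)$ coupled to a triple from the dualized $(A)$-inequality on $E'$, so that the bound from the $K$-side transfers to the correct $\|\cdot\|^*$-bound on the $E'$-side. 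Because each $c_j$ takes values in $\ker S$, the identity $Sv_j=u$ is preserved throughout, and the series $v=v_0+\sum_j c_j$ converges pointwise to a linear lift of $u$ whose uniform bounds place it in $L(E'_c,F)$.

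The main obstacle is the combinatorial bookkeeping needed to synchronize the triples $(n,m,k)$ from $(\Omega)$ on $K$ with the triples from $(A)$ on $E'$, and to select the interpolation exponents so that the corrections $c_j$ decay fast enough to yield uniform — not merely pointwise — control on each polar $B_n^\circ$, which is what promotes the limit from a mere linear map $E'\to F$ to an element of $L(E'_c,F)$. Once the two log-convex systems are aligned in this way, the geometric decay of the $\|c_j(\cdot)\|^F_n$ is automatic and the argument closes.
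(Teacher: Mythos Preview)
The paper does not prove this theorem; it is quoted verbatim from \cite[Theorem 6.3]{D-N} and used as a black box in the proof of Theorem \ref{Vector-valued surjectivity}. So there is no in-paper argument to compare your proposal against.

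On its own merits, your Mittag--Leffler strategy (iteratively correcting an initial algebraic lift by elements of $K=\ker S$, using $(\Omega)$ on $K$ and the dualized $(A)$ on $E'$ as matched log-convex inequalities) is indeed the standard mechanism behind such results, going back to Vogt's work cited here as \cite{V-TensorFundDFRaumFortsetz}. However, one step is not correct as written. You claim that a linear $w:E'\to F$ lies in $L(E'_c,F)$ as soon as $\|w(x')\|^F_n\le C_n\|x'\|^*_{\tau(n)}$, ``because every absolutely convex compact subset of $E$ is contained in some $B_n$''. That inclusion gives $p_K\le\|\cdot\|^*_n$, i.e.\ the seminorms $\|\cdot\|^*_n$ generate the \emph{strong} topology $\beta(E',E)$, which is \emph{finer} than the topology of $E'_c$. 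Your bounds therefore only yield $w\in L(E'_\beta,F)\supseteq L(E'_c,F)$, not the reverse inclusion you need. Without an additional hypothesis such as semi-Montelness of $E$ (so that bounded sets are relatively compact), these estimates alone do not place $w$ in the $\varepsilon$-product.

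In the actual arguments one does not rely on such norm estimates alone: one also keeps track of a weak-type property of the lift (for instance that $v^t(F')\subseteq E$, in the spirit of Lemma \ref{lemma Bonet}), or one works through the symmetric description $F\varepsilon E\cong L(F'_c,E)$ where nuclearity of $F$ makes $F'_c$ a $(DFS)$-space and the bounded/compact distinction disappears on that side. Your sketch would need one of these devices to close; the remaining ``bookkeeping'' you flag is genuine but secondary to this point.
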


\begin{remark}
If additionally $E$ is  Montel, Theorem \ref{surjtensor} is a consequence of the classical ($DN$)-($\Omega$) splitting theorem of Vogt and Wagner \cite{V-W-CharQuotientsMartineau, V-FuncExt1Frechet} (see the introduction of  \cite{D-N} for more information).
\end{remark}
 
 \section{Main results}\label{sect-main results}
 In this section, we state our main results and discuss some examples. 

 \subsection{The condition $(\Omega)$ for    \(\mathcal{U}_W(T^{F,G})\)} The principal result of this article is a characterization of the condition \((\Omega)\) for the space \(\mathcal{U}_W(T^{F,G})\) in terms of a concrete condition on \(W\). In order to state it, we need the following definition.
 \begin{definition}
Let \(W=(w_N)_{N\in\mathbb{N}}\) be a weight function system. We say that $W$ satisfies the condition $(\omega)$ if
\[
\begin{gathered}
\forall N \in \mathbb{N} \, \exists M \geq N \,\forall  K \geq M \, \exists \theta \in (0,1), C > 1: \\
(1-\theta) w_N(t)+\theta w_K(t)\leq w_M(t) + \log C, \qquad \forall t \geq 0.
\end{gathered}
\]
 \end{definition}
 \begin{theorem}\label{theorem Omega}
    Let \(F,G \in \mathcal{F}(\mathbb{R})\) and let \(W\) be a weight function system satisfying \((\alpha)\) and \((N)\). Suppose that \(\mathcal{U}_W(T^{F,G}) \not= \{0\}\). Then, \(\mathcal{U}_W(T^{F,G})\) satisfies $(\Omega)$ if and only if \(W\) satisfies \((\omega)\).
\end{theorem}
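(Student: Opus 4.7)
The plan is to prove both directions by first giving a concrete analytic representation of the dual $\mathcal{U}_W(T^{F,G})'$ by weighted spaces of holomorphic functions on regions complementary to the strip, in the spirit of the Martineau--Harvey representation of Fourier (ultra)hyperfunctions mentioned in the introduction, and then interpreting $(\Omega)$ as a Hadamard three-lines type log-convex inequality on these representing functions.

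For sufficiency, $(\omega) \Rightarrow (\Omega)$: after interpolating smooth Lipschitz boundary profiles via Lemma \ref{lipschitz} between a nested family of inner strips $T^{aF,aG} \subset T^{bF,bG} \subset T^{cF,cG} \subset T^{F,G}$, I would identify each dual seminorm $\|\mu\|^*_{N,a}$, up to a controlled shift in the indices, with a weighted supremum norm of the form $\sup_z e^{-w_N(|\mathrm{Re}\,z|)} |\widehat{\mu}(z)|$ on the exterior of a shrunken strip, where $\widehat{\mu}$ is the Cauchy-type transform associated to $\mu$. Convergence of the Cauchy integrals and surjectivity of the representation would rely on the nuclearity supplied by Proposition \ref{nuclear} combined with $(N)$, together with a Hahn--Banach density argument. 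With the representation in place, the desired $(\Omega)$ inequality reduces to a log-convex inequality between weighted sup-norms of a single holomorphic function $\widehat{\mu}$ on three nested exterior regions. This I would handle by a Phragm\'en--Lindel\"of / maximum-principle argument applied to the subharmonic function $\log |\widehat{\mu}(z)| - w_M(|\mathrm{Re}\,z|)$; the key analytic input is precisely the pointwise weight inequality $(1-\theta) w_N + \theta w_K \leq w_M + \log C$ provided by $(\omega)$, which permits the maximum principle to be closed into a product of the two outer norms, with the widths of the three nested exterior regions playing the role of the three lines in the classical Hadamard theorem.

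For the converse, $(\Omega) \Rightarrow (\omega)$, I would test $(\Omega)$ against the evaluation functionals $\delta_t$, $t \in \mathbb{R}$. The upper bound $\|\delta_t\|^*_{N,a} \leq e^{-w_N(|t|)}$ is immediate from the definition of the seminorm. For a matching lower bound $\|\delta_t\|^*_{N,a} \geq c\, e^{-w_{N'}(|t|)}$, with $N'$ depending only on $N$, I would use that $\mathcal{U}_W(\mathbb{C}) \neq \{0\}$ by Remark \ref{remark non-triviality} (the hypothesis $\mathcal{U}_W(T^{F,G}) \neq \{0\}$ forces $(\epsilon)_0$ via Theorem \ref{Summary surjectivity}): choosing a nonzero $\psi \in \mathcal{U}_W(\mathbb{C})$ with $\psi(0) = 1$, the translates $\varphi_t(z) = \psi(z - t)$ lie in $\mathcal{U}_W(T^{F,G})$ with $\varphi_t(t) = 1$, and the translation can be absorbed into the weight via the elementary consequence of $(\alpha)$ that $w_N(|\mathrm{Re}\,z|) \leq w_{N'}(|\mathrm{Re}\,z - t|) + w_{N'}(|t|) + O(1)$, yielding $\|\varphi_t\|_{N,a} \leq C\, e^{w_{N'}(|t|)}$. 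Plugging the two-sided estimates of $\|\delta_t\|^*$ into the $(\Omega)$ inequality and letting $t \to \infty$ then produces $(\omega)$. The main obstacle I anticipate is the sufficiency direction, in particular engineering the analytic representation so that the three dual seminorms appearing in $(\Omega)$ become exactly three weighted sup-norms to which the Phragm\'en--Lindel\"of argument applies; verifying that \emph{every} continuous linear functional is represented this way demands a careful duality/density argument leaning on the Fr\'echet--Schwartz structure of $\mathcal{U}_W(T^{F,G})$.
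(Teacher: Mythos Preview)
Your overall architecture matches the paper's proof closely: the dual is represented by a weighted Cauchy transform $\mathrm{C}_P(f)$ (with $P \in \mathcal{U}_W(\mathbb{C})$, $P(0)=1$, supplied by Remark \ref{remark non-triviality}), the $(\Omega)$ inequality is reduced to a log-convex inequality for weighted sup-norms of these holomorphic representatives, and the necessity direction is obtained exactly as you describe by testing on the evaluations $\delta_t$ with translates $P(\cdot - t)$ providing the lower bound on $\|\delta_t\|^*$.

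The one substantive divergence is in how the log-convex inequality itself is proved. Your plan to apply a Phragm\'en--Lindel\"of/maximum-principle argument to the ``subharmonic function $\log|\widehat\mu(z)| - w_M(|\mathrm{Re}\,z|)$'' has a gap: this function is subharmonic only if $w_M$ is concave, which is nowhere assumed (the $w_N$ are merely non-decreasing and unbounded). In a genuine three-lines setup the interpolation parameter is a harmonic function of $\mathrm{Im}\,z$, and there is no reason the real-part weights $e^{w_N(|\mathrm{Re}\,z|)}$ can be absorbed into such an argument. The paper circumvents this by \emph{decoupling} the geometric interpolation from the weight interpolation: it fixes bounded simply connected triangles $\Delta_3 \subset \Delta_2 \subset \Delta_1$ adapted to the gap between the boundary profiles, applies a Hadamard three-circles inequality (via the Riemann map, Lemma \ref{Hadamard three circle}) to the translates $\Delta_k + \xi$ for each $\xi \in \Gamma^\pm_{F_2}$ to obtain a log-convex bound on $|f(\xi)|$ with a \emph{fixed} exponent $\theta_1$, and only then multiplies in the weight inequality from $(\tilde\omega)$, using the subadditivity Lemma \ref{wsuba} to pass from $w_{L}(|\mathrm{Re}\,\xi|)$ to $w_{L+1}(|\mathrm{Re}\,z|)$ on the bounded triangles. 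This localization to bounded sets is precisely what allows the argument to work without any convexity hypothesis on the weights and is the main technical point you are missing.
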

The proof of Theorem \ref{theorem Omega} will be given in Section \ref{proof theorem omega}. 

We now discuss Theorem \ref{theorem Omega} for weight function systems generated by a single weight function \(w\). To this end, we consider the following conditions on \(w\):
\begin{enumerate}
    \item [\((\alpha)\)] \(\exists C > 0, A > 1 \, \forall t \geq 0: w(2t) \leq Cw(t) + \log A\) .\\
    \item[\((\delta)\)]
     \(\exists C>0,A > 1\, \forall t \geq 0: 2w(t) \leq w(Ct) + \log A\). \\
    \item [\((N)_o\)] \(\exists C > 0: \displaystyle\int_0^{\infty} e^{-Cw(t)}\mathrm{d}t < \infty\). \\
    \item [\((\epsilon)_0\)] \(\forall \mu>0: \displaystyle\int_0^{\infty}e^{-\mu t}w(t)\mathrm{d}t<\infty\).
\end{enumerate}
Following \cite{Surjectivity},  we define the weight function systems \(\tilde{W}_w=(Nw)_{N\in\mathbb{N}}\) and  \(W_w=(w(Nt))_{N\in\mathbb{N}}\).
\begin{lemma}\label{N outside}
 Let \(w\) be a weight function. Then,
 \begin{enumerate}[label=(\roman*)]
     \item \(\tilde{W}_w\) satisfies \((\alpha)\) if and only if \(w\) satisfies \((\alpha)\).
     \item \(\tilde{W}_w\) satisfies \((N)\) if and only if \(w\) satisfies \((N)_o\).
     \item \(\tilde{W}_w\) satisfies \((\epsilon)_0\) if and only if \(w\) satisfies \((\epsilon)_0\).
     \item \(\tilde{W}_w\) always satisfies \((\omega)\).
 \end{enumerate}
\end{lemma}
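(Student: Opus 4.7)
My plan is to handle the four parts in order, treating $(i)$–$(iii)$ as direct translations between the quantifier structure on the sequence $\tilde W_w = (Nw)_{N \in \mathbb N}$ and the corresponding condition on the single weight $w$, with $(iv)$ requiring only a small convex-combination calculation. The point throughout is that $\tilde W_w$ is parametrized by a single weight and a linear scaling in $N$, so the conditions collapse to statements about that scaling constant.

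For $(i)$, in the forward direction I will apply $(\alpha)$ for $\tilde W_w$ with $N = 1$, which directly yields $w(2t) \leq Mw(t) + \log A$, i.e.\ $(\alpha)$ for $w$ with $C = M$. Conversely, given $w(2t) \leq Cw(t) + \log A$, for each $N \in \mathbb N$ I will pick an integer $M \geq \max(N, NC)$ and use $w \geq 0$ to absorb $NC\, w(t)$ into $M w(t)$, producing $Nw(2t) \leq Mw(t) + \log A^{N}$. For $(ii)$, the condition $(N)$ for $\tilde W_w$ unfolds as $\int_0^\infty e^{-(M-N)w(t)}\, \mathrm dt < \infty$; taking $N = 1$ forces $M \geq 2$ (since otherwise the integrand is $\geq 1$) and gives $(N)_o$ with $C = M - 1 > 0$, while the converse takes $M := N + \lceil C \rceil$, so that $M - N \geq C$ and $e^{-(M-N)w(t)} \leq e^{-Cw(t)}$ pointwise. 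For $(iii)$, the factor $N$ pulls out of $\int_0^\infty Nw(t) e^{-\mu t}\, \mathrm dt$, so the two conditions are literally equivalent (with the $N=0$ case, if present, being trivial).

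For $(iv)$, I set $M := N + 1$, and for each $K \geq M$ I choose $\theta := \tfrac{1}{2(K - N)} \in (0,1)$, which is legitimate since $K - N \geq 1$. Then $(1 - \theta) N + \theta K = N + \tfrac{1}{2} < M$, so
$$
(1 - \theta)\, w_N(t) + \theta\, w_K(t) = \bigl((1 - \theta) N + \theta K\bigr)\, w(t) \leq M w(t) = w_M(t)
$$
for every $t \geq 0$, and the required inequality holds with any choice of $C > 1$.

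There is no real obstacle in this lemma; the only subtlety worth flagging is in $(iv)$, where it is essential to take $M$ \emph{strictly} larger than $N$, since this is precisely what leaves room for a $\theta \in (0,1)$ satisfying $(1-\theta) N + \theta K \leq M$. In the other three parts the argument is pure bookkeeping with integer quantifiers, made possible by the simple linear dependence of $\tilde W_w$ on $N$.
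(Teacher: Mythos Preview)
Your proof is correct and follows essentially the same approach as the paper. The paper dismisses $(i)$--$(iii)$ as obvious and, for $(iv)$, observes that for any $N<M<K$ one may take $0<\theta<(M-N)/(K-N)$ so that $(1-\theta)N+\theta K<M$; your argument is the special case $M=N+1$, $\theta=\tfrac{1}{2(K-N)}$ of this, with the bookkeeping for $(i)$--$(iii)$ written out explicitly.
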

\begin{proof}
    Statements \((i)\)--\((iii)\) are obvious, so we only show \((iv)\). To this end, let \(N,M,K\in\mathbb{N}\) with \(N<M<K\) be arbitrary.  For \(0<\theta<(M-N)/(K-N)\) it holds that \((1-\theta)N+\theta K< M\). This implies that \(\tilde{W}_w\) satisfies \((\omega)\).
\end{proof}
Hence, Theorem \ref{theorem Omega} yields the following result for weight function systems of type \(\tilde{W}_w\).
\begin{corollary}\label{omega for N outside}
Let \(F,G\in\mathcal{F}(\mathbb{R})\) and let \(w\) be a weight function that satisfies \((\alpha)\) and \((N)_o\). Then, \(\mathcal{U}_{\tilde{W}_w}(T^{F,G})\) satisfies \((\Omega)\). 
\end{corollary}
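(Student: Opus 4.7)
\emph{Plan.} The statement is a direct corollary of Theorem \ref{theorem Omega} together with Lemma \ref{N outside}, so the plan is to verify that the hypotheses of the theorem hold for the weight function system $W = \tilde{W}_w = (Nw)_{N \in \mathbb{N}}$ and then simply invoke it.

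First, I would read off from Lemma \ref{N outside} that $\tilde{W}_w$ inherits the desired structural properties from $w$. Part $(i)$ transfers $(\alpha)$ for $w$ to $(\alpha)$ for $\tilde{W}_w$, part $(ii)$ transfers $(N)_o$ for $w$ to $(N)$ for $\tilde{W}_w$, and, crucially, part $(iv)$ asserts that $\tilde{W}_w$ automatically satisfies $(\omega)$ (this is the observation that between any two integers $N < K$ one can pick a convex combination with weight in $(0, (M-N)/(K-N))$ giving an exponent strictly less than $M$). So with no further work, $\tilde{W}_w$ fulfills $(\alpha)$, $(N)$, and $(\omega)$.

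The only subtle point is the non-triviality hypothesis $\mathcal{U}_{\tilde{W}_w}(T^{F,G}) \neq \{0\}$ appearing in Theorem \ref{theorem Omega}. Two routes are available. The cleanest is to split cases: if $\mathcal{U}_{\tilde{W}_w}(T^{F,G}) = \{0\}$ then $(\Omega)$ is vacuously satisfied, and otherwise Theorem \ref{theorem Omega} applies directly. Alternatively, one can argue that $(\alpha)$ by itself forces polynomial growth of $w$: iterating $w(2t) \leq Cw(t) + \log A$ gives $w(2^n) = O(C^n)$, hence $w(t) = O(t^{\log_2 C})$ (note $C \geq 1$ is forced by the unboundedness of $w$). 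This readily implies $\int_0^\infty e^{-\mu t} w(t)\,\mathrm{d}t < \infty$ for every $\mu > 0$, i.e., $w$ satisfies $(\epsilon)_0$; by Lemma \ref{N outside}$(iii)$ the same then holds for $\tilde{W}_w$, and Theorem \ref{Summary surjectivity} guarantees $\mathcal{U}_{\tilde{W}_w}(T^{F,G}) \neq \{0\}$.

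With all hypotheses of Theorem \ref{theorem Omega} in place, its "if" direction yields $(\Omega)$ for $\mathcal{U}_{\tilde{W}_w}(T^{F,G})$, concluding the proof. I do not anticipate any genuine obstacle: all the analytic content is already absorbed into Theorem \ref{theorem Omega}, Theorem \ref{Summary surjectivity}, and Lemma \ref{N outside}, and the corollary amounts to little more than bookkeeping of the translation between conditions on $w$ and conditions on $\tilde{W}_w$.
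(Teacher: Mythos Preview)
Your proposal is correct and follows exactly the route the paper takes: the corollary is stated immediately after Lemma \ref{N outside} with only the sentence ``Hence, Theorem \ref{theorem Omega} yields the following result,'' so the paper's argument is precisely to combine Lemma \ref{N outside}$(i)$, $(ii)$, $(iv)$ with Theorem \ref{theorem Omega}. Your treatment of the non-triviality hypothesis (either by case-splitting on the trivial space or by deducing $(\epsilon)_0$ from $(\alpha)$, cf.\ Lemma \ref{N inside}$(iii)$, and invoking Theorem \ref{Summary surjectivity}) simply makes explicit what the paper leaves to the reader.
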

Next, we consider weight function systems of type \(W_w\).
\begin{lemma}\label{N inside}
    Let \(w\) be a weight function. Then,
    \begin{enumerate}[label=(\roman*)]
        \item \(W_w\) always satisfies \((\alpha)\).
        \item \(W_w\) satisfies \((\epsilon)_0\) if and only if \(w\) satisfies \((\epsilon)_0\). 
        \item  If \(w\) satisfies \((\alpha)\), then \(w\) satisfies \((\epsilon)_0\).
        \item  If \(w\) satisfies \((\delta)\), then \(W_w\) satisfies \((N)\). 
        \item If \(W_w\) satisfies \((\omega)\), then \(w\) satisfies \((\alpha)\).
        \item Let \(w\) satisfy \((\delta)\). Then, \(W_w\) satisfies \((\omega)\) if and only if \(w\) satisfies \((\alpha)\). 
        \end{enumerate}
    Moreover, if \(w\) satisfies \((\alpha)\) and \((\delta)\), then \(\mathcal{K}_{\tilde{W}_w}(T^{F,G}) = \mathcal{K}_{W_w}(T^{F,G})\) as Fr\'echet spaces for all \(F,G\in\mathcal{F}(\mathbb{R})\).
\end{lemma}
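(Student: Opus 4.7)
The plan is to extract explicit growth bounds on $w$ from $(\alpha)$ and $(\delta)$ and then verify the six sub-statements individually.

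Parts $(i)$ and $(ii)$ unwind to trivialities: for $(i)$ take $M = 2N$, since $w_N(2t) = w(2Nt) = w_{2N}(t)$; for $(ii)$, the substitution $s = Nt$ reduces $\int_0^\infty w(Nt) e^{-\mu t}\, \mathrm{d}t$ to $(1/N)\int_0^\infty w(s) e^{-\mu s/N}\, \mathrm{d}s$, making $(\epsilon)_0$ for $W_w$ and for $w$ manifestly equivalent. For $(iii)$, unboundedness of $w$ forces the constant in $(\alpha)$ to satisfy $C \geq 1$, and iterating yields $w(2^n t) \leq C^n w(t) + \frac{C^n - 1}{C - 1}\log A$, which gives polynomial growth $w(t) = O(t^{\log_2 C})$ (logarithmic if $C = 1$); hence $\int_0^\infty e^{-\mu t} w(t)\, \mathrm{d}t < \infty$ for every $\mu > 0$. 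For $(iv)$, the analogous unboundedness argument forces $C > 1$ in $(\delta)$, and iteration gives $w(C^n t) \geq 2^n w(t) - (2^n - 1)\log A$, so $w(t) \geq c\, t^{\log_C 2}$ for large $t$ and $\int_0^\infty e^{-w(t)}\, \mathrm{d}t < \infty$. Setting $M = \lceil CN\rceil$ then gives $w_M(t) \geq w(CNt) \geq 2 w_N(t) - \log A$, whence $\int_0^\infty e^{w_N(t) - w_M(t)}\, \mathrm{d}t \leq A \int_0^\infty e^{-w(Nt)}\, \mathrm{d}t < \infty$, verifying $(N)$. Part $(v)$ follows by applying $(\omega)$ of $W_w$ with $N = 1$ and $K = 2M$: dropping the non-negative term $(1-\theta) w(t)$ yields $\theta w(2Mt) \leq w(Mt) + \log C$, which becomes $(\alpha)$ for $w$ after the substitution $s = Mt$.

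For $(vi)$, the implication $(\omega) \Rightarrow (\alpha)$ is exactly $(v)$. Conversely, assume $w$ satisfies both $(\alpha)$ and $(\delta)$; necessarily $C > 1$ in $(\alpha)$, since the logarithmic upper bound resulting from $C = 1$ would contradict the polynomial lower bound from $(\delta)$. Given $N \in \mathbb{N}$, take $M = \lceil C_\delta N\rceil$ (with $C_\delta$, $A_\delta$ denoting the constants of $(\delta)$), so that $w(Nt) \leq \tfrac{1}{2} w(Mt) + \tfrac{1}{2}\log A_\delta$ by one step of $(\delta)$. For $K \geq M$, iterated $(\alpha)$ yields $w(Kt) \leq CK^p (w(t) + E) \leq CK^p w(Mt) + CK^p E$ with $p = \log_2 C$ and $E = \log A /(C-1)$, using $w$ non-decreasing and $M \geq 1$. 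Choosing $\theta = (4 CK^p)^{-1} \in (0, 1/4)$ makes the coefficient of $w(Mt)$ in $(1-\theta) w(Nt) + \theta w(Kt)$ equal to $\tfrac{1-\theta}{2} + \theta CK^p \leq \tfrac{3}{4}$, while the additive constant $\tfrac{1-\theta}{2}\log A_\delta + \theta CK^p E$ is bounded by $\tfrac{\log A_\delta}{2} + \tfrac{E}{4}$ independently of $K$; this establishes $(\omega)$. For the ``moreover'', the same iterated bounds make the systems $\tilde{W}_w$ and $W_w$ equivalent: iterated $(\delta)$ produces, for each $N$, some $M$ with $Nw(t) \leq w(Mt) + \mathrm{const}$, while iterated $(\alpha)$ produces, for each $N$, some $M$ with $w(Nt) \leq M w(t) + \mathrm{const}$. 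Hence the seminorm systems for $\mathcal{K}_{\tilde{W}_w}(T^{F,G})$ and $\mathcal{K}_{W_w}(T^{F,G})$ are mutually dominating, and the two Fr\'echet space topologies coincide.

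The main technical obstacle is the constant-tracking in $(vi)$: iterated $(\alpha)$ produces a polynomial-in-$K$ multiplicative factor $CK^p$ in the upper bound on $w(Kt)$, which must be absorbed by a choice $\theta \sim K^{-p}$. The key point is that the $(\omega)$-condition allows $\theta$ (and the constant $C$) to depend on $K$, so the balancing is feasible once $M$ is first chosen large enough via $(\delta)$ to dominate $w(Nt)$ by $\tfrac{1}{2} w(Mt)$, independently of $K$.
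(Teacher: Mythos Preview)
Your proof is correct. Parts $(i)$–$(v)$ and the ``moreover'' clause match the paper's treatment (the paper defers $(iii)$ and $(iv)$ to an external reference; your direct polynomial-growth arguments are valid substitutes).

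For part $(vi)$, however, your route differs from the paper's. The paper first establishes the mutual-domination inequalities $w(Nt) \leq Mw(t) + \log A$ and $Nw(t) \leq w(Mt) + \log A$ (exactly your ``moreover'' argument), observes that this makes the systems $W_w$ and $\tilde{W}_w$ equivalent, and then transfers $(\omega)$ from $\tilde{W}_w$---where it holds trivially by convexity (Lemma~\ref{N outside}$(iv)$)---to $W_w$. You instead verify $(\omega)$ for $W_w$ directly: one step of $(\delta)$ fixes $M$ with $w(Nt) \leq \tfrac{1}{2} w(Mt) + \mathrm{const}$, iterated $(\alpha)$ bounds $w(Kt)$ by $CK^p\, w(Mt) + \mathrm{const}$, and the $K$-dependent choice $\theta \sim K^{-p}$ closes the estimate. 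The paper's route is structurally cleaner and avoids the case distinction $C>1$ and the constant-tracking; yours is self-contained and gives explicit quantitative control over $\theta$.
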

\begin{proof}
    Statement \((i)\) and \((ii)\) are obvious, while statements \((iii)\) and \((iv)\) are shown in \cite[Lemmas 3.2 and 3.5]{Surjectivity}. We now show \((v)\). Assume that \(W_w\) satisfies \((\omega)\). There are \(M\in\mathbb{N}\), \(\theta\in(0,1)\), and \(C>1\) such that
    \[
\theta w(2Mt)\leq (1-\theta)w(t)+\theta w(2Mt)\leq w(Mt) +  \log C , \qquad \forall t \geq 0.
    \]
    This implies 
    \( w(2s)\leq w(s)/\theta+\log C/\theta\) for all $s \geq 0$. Next, we prove \((vi)\). By \((v)\), it suffices to show that \(W_w\) satisfies \((\omega)\) if \(w\) satisfies \((\alpha)\) and \((\delta)\). The latter assumptions imply that for all \(N\in\mathbb{N}\) there are \(M\in\mathbb{N}\) and \(A>1\) such that 
    \begin{equation}\label{N-inside is N-outside}
 w(Nt)\leq Mw(t)+\log A \text{ and } Nw(t)\leq w(Mt)+\log A, \qquad \forall t \geq 0.
    \end{equation} 
    Hence, \(W_w\) satisfies \((\omega)\) since \(\tilde{W}_w\) does so  (Lemma \ref{N outside}$(iv)$).
Finally, the last statement follows from  \eqref{N-inside is N-outside}. 
\end{proof}
Theorem \ref{theorem Omega} implies the following result for weight function systems of type \(W_w\). 
\begin{corollary}\label{omega for N inside}
    Let \(F,G\in\mathcal{F}(\mathbb{R})\) and let \(w\) be a weight function satisfying \((\delta)\). Suppose that \(\mathcal{U}_{W_w}(T^{F,G})\not=\{0\}\). Then, \(\mathcal{U}_{W_w}(T^{F,G})\) satisfies \((\Omega)\) if and only if \(w\) satisfies \((\alpha)\). 
\end{corollary}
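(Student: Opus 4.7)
The corollary is essentially a bookkeeping exercise that combines Theorem \ref{theorem Omega} with the preparatory Lemma \ref{N inside}. My plan is to apply Theorem \ref{theorem Omega} directly to the weight function system $W = W_w = (w(Nt))_{N \in \mathbb{N}}$, after verifying all the hypotheses hold.

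First, I would check the standing assumptions of Theorem \ref{theorem Omega} for $W_w$. By Lemma \ref{N inside}$(i)$, $W_w$ always satisfies $(\alpha)$, so that is free. By Lemma \ref{N inside}$(iv)$, the hypothesis $(\delta)$ on $w$ guarantees that $W_w$ satisfies $(N)$. The non-triviality assumption $\mathcal{U}_{W_w}(T^{F,G}) \not= \{0\}$ is part of the corollary's hypotheses, so there is nothing to verify. Consequently Theorem \ref{theorem Omega} applies and yields
\[
\mathcal{U}_{W_w}(T^{F,G}) \text{ satisfies } (\Omega) \iff W_w \text{ satisfies } (\omega).
\]

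Second, I would translate the condition $(\omega)$ on the system $W_w$ into the condition $(\alpha)$ on the single weight function $w$. This translation is exactly Lemma \ref{N inside}$(vi)$: under the assumption $(\delta)$ on $w$, one has $W_w$ satisfies $(\omega)$ if and only if $w$ satisfies $(\alpha)$. Chaining the two equivalences produces the desired biconditional.

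There is no real obstacle here; all the analytic work has already been done in Theorem \ref{theorem Omega} and in Lemma \ref{N inside}. The only thing worth double-checking is that the hypothesis $(\delta)$ is indeed used twice (once to secure $(N)$ for $W_w$, and once inside Lemma \ref{N inside}$(vi)$ to make the $(\omega) \Leftrightarrow (\alpha)$ equivalence go through), so the statement cannot be weakened without revisiting those lemmas. The proof itself therefore amounts to a one-line citation of Theorem \ref{theorem Omega} combined with parts $(i)$, $(iv)$, and $(vi)$ of Lemma \ref{N inside}.
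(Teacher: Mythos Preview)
Your proposal is correct and matches the paper's approach exactly: the paper's proof is the single sentence ``The result follows by combining Theorem \ref{theorem Omega} and Lemma \ref{N inside}.'' Your write-up simply unpacks which parts of Lemma \ref{N inside} are being invoked (namely $(i)$, $(iv)$, and $(vi)$), which is a faithful expansion of the same argument.
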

\begin{proof}
    The result follows by combining Theorem \ref{theorem Omega} and  Lemma \ref{N inside}.
\end{proof}
Finally, we give some examples of weight function systems \(W\) for which \(\mathcal{U}_W(T^{F,G})\) does/does not have \((\Omega)\).
\begin{example}\label{example weight function omega N-outside}
 Let \(a > 0\) and \(b \in \mathbb{R}\). Let \(w\) be a weight function such that $w(t)= t^a (\log(e+t))^b$ for \(t\) sufficiently large. Then, $w$ always satisfies \((\alpha)\) and \((\delta)\).  Hence, \(\mathcal{U}_{W_w}(T^{F,G})=\mathcal{U}_{\tilde{W}_w}(T^{F,G})\) satisfies \((\Omega)\) for all \(F,G\in\mathcal{F}(\mathbb{R})\).
\end{example}
\begin{example}
    Let \(0 < a < 1\), \(b\in \mathbb{R}\) or \(a=0\), \(b>1\).  Let $w$ be a weight function such that \(\displaystyle w(t)=e^{t^a(\log(e+t))^b}\) for \(t\) sufficiently large. Then, \(w\)  satisfies \((\delta)\) and $(\epsilon)_0$, but not \((\alpha)\). Hence, for all \(F,G\in\mathcal{F}(\mathbb{R})\), \(\mathcal{U}_{W_w}(T^{F,G})\not=\{0\}\) does not satisfy \((\Omega)\). We do not know whether \(\mathcal{U}_{\tilde{W}_w}(T^{F,G})\) satisfies \((\Omega)\). For $T^{F,G} = T^h$, $h >0$, a horizontal strip, this follows from \cite[Proposition 3.1]{Sequence space representations}. 
\end{example}
 \subsection{Surjectivity of the Cauchy--Riemann operator on   \(\mathcal{K}_W(T^{F,G};E)\)} 
We have the following consequence of Theorem \ref{theorem Omega}.
 
 \begin{theorem}\label{Vector-valued surjectivity}
Let \(F,G\in\mathcal{F}(\mathbb{R})\) and let \(W\) be a weight function system satisfying  \((\alpha)\), \((N)\), and \((\omega)\). Suppose that the Cauchy--Riemann operator
   \begin{equation}
   \label{deltabar}
    \overline{\partial}\colon \mathcal{K}_W(T^{F,G})\rightarrow\mathcal{K}_W(T^{F,G})
\end{equation}
    is surjective. Let $E$ be a locally complete lcHs with a fundamental sequence of bounded sets. If $E$ satisfies  \((A)\), then the  $E$-valued Cauchy--Riemann operator
   \begin{equation}
   \label{vvdeltabar}
    \overline{\partial}\colon \mathcal{K}_W(T^{F,G};E)\rightarrow\mathcal{K}_W(T^{F,G};E)
    \end{equation}
is surjective as well.
\end{theorem}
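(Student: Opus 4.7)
The plan is to recognize the $E$-valued Cauchy--Riemann operator \eqref{vvdeltabar} as the $\varepsilon$-tensorization of the scalar operator \eqref{deltabar} and then invoke Theorem \ref{surjtensor}. The starting point is Lemma \ref{epsilon-rep}$(iii)$, which (since $W$ satisfies $(\alpha)$ and $(N)$, and $E$ is locally complete) provides a vector-space isomorphism
\[
\Phi : \mathcal{K}_W(T^{F,G};E) \longrightarrow \mathcal{K}_W(T^{F,G}) \, \varepsilon \, E, \qquad \mathbf{f} \mapsto (x' \mapsto \langle x', \mathbf{f}\rangle).
\]

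Next, I would verify that $\Phi$ intertwines the two Cauchy--Riemann operators, i.e.\ that
\[
\Phi \circ \overline{\partial} \;=\; (\overline{\partial} \, \varepsilon \, \mathrm{id}_E) \circ \Phi.
\]
Unwinding the definition of $\overline{\partial} \, \varepsilon \, \mathrm{id}_E$ from the notation subsection and applying it to $\Phi(\mathbf{f}) \in L(E'_c,\mathcal{K}_W(T^{F,G}))$, we compute for any $x' \in E'$
\[
(\overline{\partial} \, \varepsilon \, \mathrm{id}_E)(\Phi(\mathbf{f}))(x') = \overline{\partial}\bigl(\langle x',\mathbf{f}\rangle\bigr) = \langle x', \overline{\partial}\mathbf{f}\rangle = \Phi(\overline{\partial}\mathbf{f})(x'),
\]
where the middle equality uses Lemma \ref{epsilon-rep}$(i)$ applied to the first-order differential operator $\overline{\partial}$. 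Hence the surjectivity of \eqref{vvdeltabar} is equivalent to the surjectivity of $\overline{\partial} \, \varepsilon \, \mathrm{id}_E$.

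The final step is to apply Theorem \ref{surjtensor} with $F = G = \mathcal{K}_W(T^{F,G})$ and $S = \overline{\partial}$. By Proposition \ref{nuclear}, $\mathcal{K}_W(T^{F,G})$ is a nuclear Fr\'echet space; the map $\overline{\partial}$ is surjective by hypothesis; and its kernel equals $\mathcal{U}_W(T^{F,G})$. The hypothesized surjectivity of \eqref{deltabar} together with Theorem \ref{Summary surjectivity} forces $\mathcal{U}_W(T^{F,G}) \neq \{0\}$, so Theorem \ref{theorem Omega} (using $(\omega)$) yields that $\ker \overline{\partial}$ satisfies $(\Omega)$. Since $E$ is locally complete with a fundamental sequence of bounded sets and satisfies $(A)$, Theorem \ref{surjtensor} gives the surjectivity of $\overline{\partial} \, \varepsilon \, \mathrm{id}_E$, and hence of \eqref{vvdeltabar}.

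The only non-bookkeeping point is the intertwining identity $\Phi \circ \overline{\partial} = (\overline{\partial} \, \varepsilon \, \mathrm{id}_E) \circ \Phi$; once that is in place, everything else is a matter of checking the hypotheses of Theorem \ref{surjtensor}, all of which are supplied by the preliminary section together with our main Theorem \ref{theorem Omega}. I do not expect any genuine obstacle here, as the intertwining is essentially built into the way $\Phi$ is defined.
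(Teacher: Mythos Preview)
Your proposal is correct and follows essentially the same approach as the paper: identify \eqref{vvdeltabar} with $\overline{\partial}\,\varepsilon\,\mathrm{id}_E$ via the isomorphism $\Phi$ of Lemma \ref{epsilon-rep}$(iii)$ (the intertwining coming from Lemma \ref{epsilon-rep}$(i)$), and then apply Theorem \ref{surjtensor} after checking nuclearity (Proposition \ref{nuclear}) and $(\Omega)$ for the kernel $\mathcal{U}_W(T^{F,G})$ (Theorem \ref{theorem Omega}). Your explicit verification that $\mathcal{U}_W(T^{F,G})\neq\{0\}$ via Theorem \ref{Summary surjectivity} is a nice touch that the paper leaves implicit.
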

\begin{proof}
Note that $\mathcal{U}_W(T^{F,G})$ is the kernel of the map  \(\overline{\partial}\colon \mathcal{K}_W(T^{F,G})\rightarrow\mathcal{K}_W(T^{F,G})\).  By Proposition \ref{nuclear} and Theorem  \ref{theorem Omega}, $\mathcal{U}_W(T^{F,G})$ is a nuclear Fr\'echet space that satisfies $(\Omega)$. Hence, Theorem \ref{surjtensor} yields that the map \(\overline{\partial} \varepsilon \mathrm{id}_E\) is surjective on \(\mathcal{K}_W(T^{F,G})\varepsilon E\). Lemma \ref{epsilon-rep}$(i)$ implies that,  under the isomorphism \(\Phi: \mathcal{K}_W(T^{F,G}; E)  \to \mathcal{K}_W(T^{F,G})\varepsilon E\) from Lemma \ref{epsilon-rep}$(iii)$,  the map  \(\overline{\partial} \varepsilon \mathrm{id}_E\) is  given by \eqref{vvdeltabar}. This shows the result.
\end{proof}

\begin{remark}\label{Frechet}
$(i)$ Let \(F,G\in\mathcal{F}(\mathbb{R})\) and let \(W\) be a weight function system satisfying  \((\alpha)\) and \((N)\). Recall from Theorem \ref{Summary surjectivity} that the map \eqref{deltabar} is surjective if and only if \(W\) satisfies $(\epsilon_0)$. \\
$(ii)$ Theorem \ref{Vector-valued surjectivity} is particularly applicable to weight function systems of the form $\tilde{W}_w$, with $w$ a weight function satisfying $(\alpha)$ and  \((N)_o\)  (Lemma \ref{N outside}). \\
$(iii)$ Let \(F,G\in\mathcal{F}(\mathbb{R})\) and let \(W\) be a weight function system satisfying  \((\alpha)\) and \((N)\). Suppose that the map \eqref{deltabar} is surjective. Then, for any Fr\'echet space $E$,  the $E$-valued Cauchy-Riemann operator \eqref{vvdeltabar} is surjective: As in the proof of Theorem \ref{Vector-valued surjectivity}, it suffices to show  the map \(\overline{\partial} \varepsilon \mathrm{id}_E\) is surjective on \(\mathcal{K}_W(T^{F,G})\varepsilon E\). Since \(\mathcal{K}_W(T^{F,G})\) is complete and nuclear (Proposition \ref{nuclear}), it holds that  \(\mathcal{K}_W(T^{F,G})\varepsilon E\) =  \(\mathcal{K}_W(T^{F,G}) \widehat{\otimes}_\pi E\) (cf.\ \cite[Proposition 1.4]{K-Ultradistributions3}).  The result now follow from the fact that the $\pi$-completed tensor product  of two surjective continuous linear maps between Fr\'echet spaces is again surjective \cite[Proposition 43.9]{Treves}.
\end{remark}

We have the following converse to Theorem  \ref{Vector-valued surjectivity}. 
\begin{theorem}\label{Converse}
 Let \(F,G\in\mathcal{F}(\mathbb{R})\) and let \(W\) be a weight function system. Let $E$ be a locally complete lcHs with a fundamental sequence of bounded sets. If the  $E$-valued Cauchy--Riemann operator  \eqref{vvdeltabar}
 is surjective, then $E$ satisfies $(A)$. 
\end{theorem}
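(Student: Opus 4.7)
The plan is to prove the contrapositive: assuming $E$ does not satisfy $(A)$, I will construct $\mathbf{g} \in \mathcal{K}_W(T^{F,G};E)$ that is not in the range of $\overline{\partial}$, contradicting the surjectivity hypothesis. The negation of $(A)$ provides a baseline $n \in \mathbb{N}$ together with $m \geq n$ and $s > 0$ such that for every $\ell \geq m$ and $C > 0$ one can find $r > 0$ and $x \in B_m$ with $x \notin r B_n + (C/r^s) B_\ell$. A diagonal procedure then produces a sequence $(x_k) \subset B_m$ witnessing this failure against the successive bounded sets $B_k$ as $k \to \infty$.

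Next, I build the "bad" right-hand side locally near infinity. Choose interior points $(z_k) \subset T^{F,G}$ on the real axis with $|\operatorname{Re} z_k|$ growing fast enough that the closed disks $\overline{D(z_k,\rho)}$ are pairwise disjoint and contained in $\overline{T^{aF,aG}}$ for some $a \in (0,1)$. Fix a bump $\varphi \in C_c^\infty(\{\rho/2 < |\zeta| < \rho\})$ whose Taylor moment $c_j = (1/\pi)\int \varphi(\zeta)/\zeta^{j+1}\,dA(\zeta)$ is non-zero for some $j \geq 0$, and set $\varphi_k(z) = \varphi(z - z_k)$. Choosing positive scalars $\mu_k$ so that $\mu_k e^{w_N(|\operatorname{Re} z_k|)} \to 0$ for every $N$ --- for instance $\mu_k = 2^{-k} e^{-w_k(|\operatorname{Re} z_k|)}$ combined with sufficiently fast growth of $|\operatorname{Re} z_k|$ --- the function
\[
\mathbf{g}(z) = \sum_k \mu_k \varphi_k(z)\, x_k
\]
lies in $\mathcal{K}_W(T^{F,G};E)$, by the disjointness of the supports of the $\varphi_k$ and the uniform boundedness of $(x_k)$ in $B_m$. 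Assume $\overline{\partial} \mathbf{f} = \mathbf{g}$ for some $\mathbf{f} \in \mathcal{K}_W(T^{F,G};E)$. Since $\varphi_k$ vanishes on the inner disk $D(z_k,\rho/2)$, $\mathbf{f}$ is holomorphic there, and the Cauchy--Pompeiu formula on $D(z_k,\rho)$ --- using that only the $k$-th summand of $\mathbf{g}$ is active inside this disk --- yields the identity
\[
\mu_k c_j x_k = \frac{1}{2\pi i}\int_{|\zeta-z_k|=\rho} \frac{\mathbf{f}(\zeta)}{(\zeta-z_k)^{j+1}}\,d\zeta - \frac{\mathbf{f}^{(j)}(z_k)}{j!}.
\]

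Each term on the right is controlled by the weighted seminorms of $\mathbf{f}$, which --- because $E$ has a fundamental sequence of bounded sets $(B_\ell)$ --- take the form of a containment $\mathbf{f}(\zeta) \in e^{-w_N(|\operatorname{Re} \zeta|)} B_{\ell(N)}$, for some $\ell(N)$ depending on $\mathbf{f}$ and the chosen seminorm. Substituting two distinct values of $N$ in the identity above and combining the resulting bounds via a convex combination produces an inclusion of the form $x_k \in r B_n + (C/r^s) B_\ell$ with $\ell$ determined by $\mathbf{f}$ and with $r,C$ depending explicitly on $|\operatorname{Re} z_k|$ and on $\mu_k$. Choosing $k$ sufficiently large so that the witness $x_k$ was selected precisely to contradict this inclusion yields the desired contradiction. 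The main technical obstacle is this final interpolation step: aligning the two-parameter exponential trade-off coming from the weight system $W$ with the $r$-versus-$r^{-s}$ trade-off of condition $(A)$, while tuning the growth of $|\operatorname{Re} z_k|$ and the scalings $\mu_k$ so that the witness $(x_k)$ from the failure of $(A)$ actually falls outside the inclusion that is produced.
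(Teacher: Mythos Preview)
Your approach is genuinely different from the paper's and, as written, contains a real gap. The paper does not argue by contrapositive at all: it shows that surjectivity of $\overline{\partial}$ on $\mathcal{K}_W(T^{F,G};E)$ allows one to interpolate \emph{any} sequence $(e_n)\subset E$ by an $E$-valued holomorphic function at points $it_n\to iF(0)$ approaching the \emph{boundary} of the strip (Lemma~\ref{Interpolation theorem}), and then invokes an abstract result (Proposition~\ref{Concrete interpolation}, from \cite{D-N}) stating that solvability of such interpolation problems forces $E$ to satisfy $(A)$. The interpolation lemma is a one-line application of the $\overline{\partial}$ trick: write $\mathbf g=\sum \psi_n e_n$ with bump functions $\psi_n$ of real support in $[-1,1]$, pick $\varphi\in\mathcal H(T^{F,G})$ vanishing exactly at the $it_n$, solve $\overline{\partial}\mathbf h=-\overline{\partial}\mathbf g/\varphi\in\mathcal K_W(T^{F,G};E)$, and set $\mathbf f=\mathbf g+\varphi\mathbf h$.

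The gap in your proposal is precisely the step you yourself flag. From the Cauchy--Pompeiu identity you obtain
\[
\mu_k c_j\,x_k \;=\; A_k - B_k,
\]
where both $A_k$ (the boundary integral) and $B_k$ (the derivative value) are elements of $E$ controlled by the seminorms of $\mathbf f$. Concretely, for each $N$ there is an index $\ell(N)$ \emph{depending on $\mathbf f$} and a constant $C_N$ with $A_k,B_k\in C_N e^{-w_N(|\mathrm{Re}\,z_k|)}B_{\ell(N)}$. This only yields $x_k\in (\text{scalar})\cdot B_{\ell(N_1)}+(\text{scalar})\cdot B_{\ell(N_2)}$, where both indices $\ell(N_1),\ell(N_2)$ are dictated by $\mathbf f$. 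To contradict the failure of $(A)$ you need $x_k\in rB_n+(C/r^s)B_k$ with $n$ the \emph{fixed baseline index} chosen \emph{before} $\mathbf f$ (and before your witnesses $x_k$); nothing in your construction forces any piece of $\mathbf f$ into that specific small set $B_n$. ``Substituting two distinct values of $N$'' does not help: varying $N$ changes the scalar prefactors $e^{-w_N}$ but not the bounded sets, which are always $B_{\ell(\cdot)}$ with $\ell(\cdot)$ fixed by $\mathbf f$.

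There is a second structural issue: you place the points $z_k$ on the real axis tending to infinity, where the \emph{weights} act. But the $r$-versus-$r^{-s}$ trade-off in condition $(A)$ is a Cauchy-estimate phenomenon arising when holomorphic functions are evaluated near the \emph{boundary} of their domain (shrinking radii against growing derivatives). That is exactly why the paper takes $it_n\to iF(0)$, and why the black box Proposition~\ref{Concrete interpolation} requires the interpolation nodes to accumulate only at $\partial X$. Your configuration does not create this trade-off, and there is no assumption on $W$ ensuring that the family $(e^{-w_N})_N$ produces an $(A)$-type inequality on its own. If you want to salvage a direct argument, you would have to move the $z_k$ to the boundary and essentially redo the proof of \cite[Example~7.8]{D-N}; the paper sidesteps this by citing it.
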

The proof of Theorem \ref{Converse} will be given in Section \ref{proof theorem omega}. 

We end this subsection with two consequences of Theorem \ref{Converse}, which can be interpreted as parameter dependence results for the Cauchy-Riemann operator.
Given an open set set $X \subseteq \mathbb{R}^d$, we denote by $\mathcal{E}'(X)$ the strong dual of the Fr\'echet space $C^{\infty}(X)$. Note that $\mathcal{E}'(X)$ is the space of distributions having compact support contained in $X$.

\begin{corollary}
    Let \(F,G\in\mathcal{F}(\mathbb{R})\),  let \(W\) be a weight function system, and  let \(X\subseteq \mathbb{R}^d\) be open. The map 
    \[
    \overline{\partial}\colon \mathcal{K}_W(T^{F,G}; \mathcal{E}'(X) )\rightarrow \mathcal{K}_W(T^{F,G};\mathcal{E}'(X))
    \] is not surjective.
\end{corollary}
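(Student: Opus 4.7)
The plan is to apply Theorem~\ref{Converse} in its contrapositive form: if I can establish that $\mathcal{E}'(X)$ does not satisfy condition $(A)$, the map in question cannot be surjective. (I implicitly assume $X$ non-empty, otherwise $\mathcal{E}'(X) = \{0\}$ and both the operator and the statement become trivial.) As a preliminary check, $E = \mathcal{E}'(X)$ does satisfy the hypotheses of Theorem~\ref{Converse}: being the strong dual of the nuclear Fr\'echet space $C^{\infty}(X)$, it is a complete $(DF)$-space, hence locally complete and equipped with a countable fundamental sequence of bounded sets.

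The next step is to reformulate $(A)$ for $\mathcal{E}'(X)$ as a property of $C^{\infty}(X)$. By Remark~\ref{remark-DN}, $\mathcal{E}'(X) = (C^{\infty}(X))'$ satisfies $(A)$ if and only if $C^{\infty}(X)$ satisfies $(DN)$. Thus the proof reduces to showing that $C^{\infty}(X)$ fails $(DN)$.

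This last step is a classical observation. First, $(DN)$ forces the existence of a continuous norm on any Fr\'echet space: with the notation of Remark~\ref{remark-DN}, if $x$ lies in the kernel of the distinguished seminorm $\|\cdot\|_n$, the log-convex inequality yields $\|x\|_m = 0$ for every $m \geq n$, and Hausdorff-ness then gives $x = 0$, so $\|\cdot\|_n$ is already a norm. However, $C^{\infty}(X)$ admits no continuous norm: every continuous seminorm is dominated by one of the form $\sup_{x \in K,\ |\alpha|\leq k} |\varphi^{(\alpha)}(x)|$ for some compact $K \subseteq X$ and $k \in \mathbb{N}$, and since $X$ open and non-empty in $\mathbb{R}^d$ is never compact, the set $X \setminus K$ is non-empty and open, supporting nonzero test functions that this seminorm annihilates. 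Consequently $C^{\infty}(X)$ fails $(DN)$ and the corollary follows.

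No step presents a genuine analytical obstacle; the corollary is a clean structural consequence of the converse direction Theorem~\ref{Converse} combined with the classical absence of a continuous norm on $C^{\infty}(X)$. The only book-keeping to be mindful of is confirming that $\mathcal{E}'(X)$ is locally complete and has a fundamental sequence of bounded sets, so that Theorem~\ref{Converse} is applicable in the first place.
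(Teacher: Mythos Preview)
Your proposal is correct and follows the same route as the paper: apply Theorem~\ref{Converse}, reduce $(A)$ for $\mathcal{E}'(X)$ to $(DN)$ for $C^{\infty}(X)$ via Remark~\ref{remark-DN}, and observe that $C^{\infty}(X)$ has no continuous norm. You supply more detail than the paper (verifying the hypotheses on $\mathcal{E}'(X)$ and spelling out why $(DN)$ forces a continuous norm), but the argument is the same.
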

\begin{proof}
In view of Theorem \ref{Converse}, it suffices to show that $\mathcal{E}'(X)$ does not satisfy $(A)$. By Remark \ref{remark-DN}, this follows from the fact that \(C^{\infty}(X)\) does not satisfy \((DN)\) (this space does not even have a  continuous norm). 
\end{proof}
We write $\mathcal{A}(\mathbb{R})$ for the space of real analytic functions on $\mathbb{R}$, endowed with its natural locally convex topology. We refer to \cite[Chapter 29]{Meise-Vogt} for the definition of power series spaces.
\begin{corollary}\label{real analytic}
Let \(F,G\in\mathcal{F}(\mathbb{R})\) and  let \(W\) be a weight function system. The map 
    \begin{equation}
    \label{realanalyticvv}
    \overline{\partial}\colon \mathcal{K}_W(T^{F,G}; \mathcal{A}(\mathbb{R}) )\rightarrow \mathcal{K}_W(T^{F,G};\mathcal{A}(\mathbb{R}))
   \end{equation}
   is not surjective.
\end{corollary}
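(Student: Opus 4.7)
To prove Corollary \ref{real analytic}, I reduce via a complemented-subspace argument to an application of Theorem \ref{Converse} on a $(DFS)$-subspace of $\mathcal{A}(\mathbb{R})$ failing $(A)$; since $\mathcal{A}(\mathbb{R})$ itself is a $(PLS)$-space without a countable fundamental family of bounded sets, Theorem \ref{Converse} does not apply to it directly. First I dispose of the trivial case: if $W$ does not satisfy $(\epsilon)_0$, then by Theorem \ref{Summary surjectivity} the scalar operator $\overline{\partial}\colon \mathcal{K}_W(T^{F,G})\to \mathcal{K}_W(T^{F,G})$ fails to be surjective, and for any $g\in \mathcal{K}_W(T^{F,G})\setminus \overline{\partial}\mathcal{K}_W(T^{F,G})$ the constant-valued $\mathbf{g}(z)=g(z)\cdot 1_{\mathcal{A}(\mathbb{R})}\in\mathcal{K}_W(T^{F,G};\mathcal{A}(\mathbb{R}))$ cannot be in the image of \eqref{realanalyticvv}: composing a hypothetical solution with $\mathrm{ev}_0\in\mathcal{A}(\mathbb{R})^{\prime}$ would yield a scalar preimage of $g$. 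So I assume $(\epsilon)_0$ in the sequel.

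Next, I would produce a continuous linear projection $\pi\colon \mathcal{A}(\mathbb{R})\to E_0$ onto a complemented closed subspace $E_0$ isomorphic to a $(DFS)$-space failing $(A)$. A natural candidate is $E_0\cong \mathcal{A}(\mathbb{T})\cong \mathcal{H}(\overline{\mathbb{D}})\oplus\mathcal{H}(\overline{\mathbb{D}})$, the space of $2\pi$-periodic real-analytic functions, which via Fourier series decomposes into two copies of the space of holomorphic germs on the closed unit disk; as recalled in the introduction, $\mathcal{H}(\overline{\mathbb{D}})$ (germs on the closure of a bounded Reinhardt domain) does not satisfy $(A)$, so neither does $E_0$. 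With inclusion $\iota\colon E_0\hookrightarrow \mathcal{A}(\mathbb{R})$ and $\pi\circ\iota=\mathrm{id}_{E_0}$, surjectivity of \eqref{realanalyticvv} transfers to the $E_0$-valued Cauchy--Riemann operator: for $\mathbf{g}\in \mathcal{K}_W(T^{F,G};E_0)$, solve $\overline{\partial}\mathbf{f}=\iota\circ\mathbf{g}$ in $\mathcal{K}_W(T^{F,G};\mathcal{A}(\mathbb{R}))$; then $\overline{\partial}(\pi\circ \mathbf{f})=\pi\circ\iota\circ \mathbf{g}=\mathbf{g}$. Theorem \ref{Converse} applied to the $(DFS)$-space $E_0$ would then force $E_0$ to satisfy $(A)$, a contradiction.

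The main technical obstacle is the construction of the projection $\pi$. The naive Fourier projection $f\mapsto \sum_{n\in\mathbb{Z}}c_n(f)e^{inz}$ with $c_n(f)=(2\pi)^{-1}\int_{-\pi}^{\pi}f(t)e^{-int}\,\mathrm{d}t$ does not land in $\mathcal{A}(\mathbb{T})$ for non-periodic $f\in\mathcal{A}(\mathbb{R})$: the restriction $f|_{[-\pi,\pi]}$ jumps upon periodic continuation, its Fourier coefficients decay only like $1/n$, and the resulting sum fails to be real-analytic. A valid projection requires a more refined extraction of the periodic part of the holomorphic extension of $f$, and verifying continuity between the $(PLS)$-topology of $\mathcal{A}(\mathbb{R})$ and the $(DFS)$-topology of $E_0$ is delicate because bounded sets in $\mathcal{A}(\mathbb{R})$ depend on complex neighborhoods of $\mathbb{R}$ in a non-uniform way. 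Once the complementation is in hand, however, the transfer of surjectivity and the appeal to Theorem \ref{Converse} are routine, as in the proof of the preceding corollary for $\mathcal{E}'(X)$.
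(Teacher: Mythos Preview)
Your strategy coincides with the paper's: pass to the complemented subspace $\mathcal{A}_{\operatorname{per}}(\mathbb{R})$ of $2\pi$-periodic real-analytic functions, transfer surjectivity of $\overline{\partial}$ to the $\mathcal{A}_{\operatorname{per}}(\mathbb{R})$-valued operator, and then invoke Theorem~\ref{Converse} together with the failure of $(A)$ for $\mathcal{A}_{\operatorname{per}}(\mathbb{R})$. The ``main technical obstacle'' you single out---the existence of a continuous linear projection $\mathcal{A}(\mathbb{R})\to\mathcal{A}_{\operatorname{per}}(\mathbb{R})$---is not something you need to construct by hand: it is a theorem of Langenbruch \cite[Corollary~4.3]{Langenbruch2011}, and the paper simply cites it. So your proposal is correct in outline but incomplete precisely where the literature already closes the gap; you correctly diagnosed that the naive Fourier projection fails, but the actual construction is nontrivial and not expected here.

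Two minor remarks. First, your initial case distinction on $(\epsilon)_0$ is unnecessary: Theorem~\ref{Converse} carries no hypothesis on $W$ beyond it being a weight function system, so the complemented-subspace argument already covers all cases (and your appeal to Theorem~\ref{Summary surjectivity} would in any event require $(\alpha)$ and $(N)$, which are not assumed in the corollary). Second, for the failure of $(A)$ the paper argues slightly differently from you: rather than invoking germs on $\overline{\mathbb{D}}$, it observes that $\mathcal{A}_{\operatorname{per}}(\mathbb{R})$ is, via Fourier coefficients, isomorphic to the strong dual of the finite-type power series space $\Lambda_0(j)$, and then uses Remark~\ref{remark-DN} together with the well-known fact that $\Lambda_0(j)$ fails $(DN)$. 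Both routes are valid.
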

\begin{proof}
By \cite[Corollary 4.3]{Langenbruch2011}, the space $\mathcal{A}_{\operatorname{per}}(\mathbb{R})$ of $2\pi$-periodic real analytic functions on $\mathbb{R}$ is complemented in $\mathcal{A}(\mathbb{R})$. If the map \eqref{realanalyticvv} would be  surjective, then    $\overline{\partial}$ would also be surjective on $\mathcal{K}_W(T^{F,G};\mathcal{A}_{\operatorname{per}}(\mathbb{R}))$. We now show that the latter is not true. By Theorem \ref{Converse}, it suffices to show that $\mathcal{A}_{\operatorname{per}}(\mathbb{R})$ does not satisfy $(A)$. Note that $\mathcal{A}_{\operatorname{per}}(\mathbb{R})$ is isomorphic to the strong dual of a power series space $\Lambda_0(j)$ of finite type via Fourier coefficients. The result now follows from  Remark \ref{remark-DN} and the fact  that power series spaces of finite type do not satisfy $(DN)$ \cite[Proof of Proposition 29.3]{Meise-Vogt}.
\end{proof}
\section{Proof of Theorem \ref{theorem Omega}}\label{proof theorem omega}
This section is dedicated to the proof of Theorem \ref{theorem Omega}. We split it into two parts.
\subsection{Sufficiency of the condition $(\omega)$} Our goal is to show  the condition $(\Omega)$ --a log-convex inequality for the dual norms-- for the Fr\'echet space  $\mathcal{U}_W(T^{F,G})$. As  explained in the introduction, our proof method consists of two main steps. First, we represent the dual of $\mathcal{U}_W(T^{F,G})$ by a weighted space of  holomorphic functions.  To this end, we associate to each $f \in (\mathcal{U}_W(T^{F,G}))'$  a holomorphic function defined on an open set containing $\mathbb{C} \backslash T^{F,G}$, called \emph{the weighted Cauchy transform} of \(f\), and show that $f$ may be seen as the boundary value of this function. This allows us to express the dual norms of $f$ in terms of weighted supremum norms of its weighted Cauchy transform.  Next, we show  a log-convex inequality for the weighted spaces of holomorphic functions representing  $(\mathcal{U}_W(T^{F,G}))'$. The condition $(\Omega)$ for $\mathcal{U}_W(T^{F,G})$ then easily follows by combining these two parts.
We start with some preparatory lemmas. Consider the following three variants of the conditions $(\alpha)$, $(N)$, and $(\omega)$ for a weight function system \(W = (w_N)_{N\in \mathbb{N}}\):
\begin{enumerate}
    \item [\((\tilde{\alpha})\)] \(\forall N \in \mathbb{N} \, \exists A > 1 \, \forall t\geq 0 : w_N(2t) \leq w_{N+1}(t) + \log A\).  
      \\
  \item [ \((\tilde{N})\)] \(\forall N \in \mathbb{N} : \displaystyle{\int_{0}^{\infty} e^{w_N(t) - w_{N+1}(t)} \mathrm{d}t< \infty}\).
  \item[ \((\tilde{\omega})\)] \( \forall N \in \mathbb{N}, K \geq N+1 \, \exists \theta \in (0,1), C > 1: \)
  $$
(1-\theta) w_N(t)+\theta w_K(t)\leq w_{N+1}(t) + \log C, \qquad \forall t \geq 0. 
$$
  \end{enumerate}
\begin{lemma}\label{reduction}
Let  \(W = (w_N)_{N\in \mathbb{N}}\) be a weight function system  satisfying $(\alpha)$ and/or $(N)$ and/or $(\omega)$. Then, there is a weight function system \(\tilde{W}\) satisfying  $(\tilde{\alpha})$ and/or $(\tilde{N})$ and/or $(\tilde{\omega})$ such that $\mathcal{U}_W(T^{F,G}) = \mathcal{U}_{\tilde{W}}(T^{F,G})$ as Fr\'echet spaces for all $F, G \in \mathcal{F}(\mathbb{R})$.
\end{lemma}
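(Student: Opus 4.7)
The plan is to prove the lemma by extracting a suitable subsequence of $W$: the tilde versions of the conditions are obtained from the original ones precisely by forcing the witness ``$M \geq N$'' to be $N+1$, so it suffices to reindex $W$ along indices that are spaced far enough apart to absorb all the required witnesses.

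More precisely, I would argue as follows. For each $N\in\mathbb{N}$, let $M_\alpha(N)$ (respectively $M_N(N)$, $M_\omega(N)$) denote a witness $M \geq N$ to whichever of $(\alpha)$, $(N)$, $(\omega)$ is assumed for $W$ at the index $N$. Set $N_0 = 0$ and inductively
\[
N_{k+1} = 1 + \max\{N_k,\ M_\alpha(N_k),\ M_N(N_k),\ M_\omega(N_k)\},
\]
including in the maximum only those terms corresponding to conditions that $W$ is assumed to satisfy. Then define the weight function system $\tilde{W} = (\tilde{w}_k)_{k \in \mathbb{N}}$ by $\tilde{w}_k = w_{N_k}$; pointwise monotonicity of $\tilde{W}$ is inherited from $W$, since $N_k < N_{k+1}$.

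Next, I would verify each tilde condition for $\tilde{W}$ by reducing it to the corresponding condition for $W$ at index $N_k$, then absorbing the witness $M_\bullet(N_k)$ into $w_{N_{k+1}}$ via the pointwise monotonicity $w_{M_\bullet(N_k)} \leq w_{N_{k+1}}$. For $(\tilde\alpha)$, this gives $\tilde w_k(2t) \leq w_{M_\alpha(N_k)}(t) + \log A \leq \tilde w_{k+1}(t) + \log A$. For $(\tilde N)$, $\int_0^\infty e^{\tilde w_k(t) - \tilde w_{k+1}(t)} \mathrm{d}t \leq \int_0^\infty e^{w_{N_k}(t) - w_{M_N(N_k)}(t)} \mathrm{d}t < \infty$. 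For $(\tilde\omega)$, given $k \in \mathbb{N}$ and $K \geq k+1$, one has $N_K \geq N_{k+1} \geq M_\omega(N_k)$, so applying $(\omega)$ at $N = N_k$ with inner parameter $N_K$ produces $\theta \in (0,1)$ and $C > 1$ with
\[
(1-\theta)\tilde w_k(t) + \theta \tilde w_K(t) \leq w_{M_\omega(N_k)}(t) + \log C \leq \tilde w_{k+1}(t) + \log C.
\]
The topological equality $\mathcal{U}_W(T^{F,G}) = \mathcal{U}_{\tilde W}(T^{F,G})$ then follows immediately from the fact that $\tilde{W}$ is a cofinal subsequence of $W$: each seminorm of $\mathcal{U}_{\tilde W}$ is literally a seminorm of $\mathcal{U}_W$, while for every $N$ there is a $k$ with $N_k \geq N$, so $w_N \leq \tilde{w}_k$ and $\|\cdot\|_{N,a}^W \leq \|\cdot\|_{k,a}^{\tilde{W}}$.

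The main obstacle is the condition $(\omega)$, which, unlike $(\alpha)$ and $(N)$, carries an additional universal quantifier over $K \geq M$ inside the ``$\exists M$''. The key observation that makes the argument go through is that the conclusion of $(\omega)$ remains valid if $w_M$ is replaced by any $w_{M'}$ with $M' \geq M$ (using the same $\theta$ and $C$), by pointwise monotonicity of $W$; this allows the witness $M_\omega(N_k)$ to be absorbed into $w_{N_{k+1}}$ without disturbing the inner $\forall K$, so the reindexing is compatible with all three conditions simultaneously.
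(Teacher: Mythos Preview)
Your proof is correct and follows exactly the same idea as the paper's: pass to a cofinal subsequence $(w_{N_k})_k$ of $W$ chosen so that each step absorbs the witnesses in $(\alpha)$, $(N)$, and $(\omega)$. The paper merely asserts the existence of such a subsequence without details, so your explicit recursion and verification are a faithful expansion of the intended argument.
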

\begin{proof}
Since  \(W\)  satisfies $(\alpha)$ and/or $(N)$ and/or $(\omega)$, there is an increasing sequence $(N_k)_{k \in \mathbb{N}}$ of natural numbers such that  the weight function system  \(\tilde{W}  = (w_{N_k})_{k\in \mathbb{N}}\) satisfies  $(\tilde{\alpha})$ and/or $(\tilde{N})$ and/or $(\tilde{\omega})$. It is clear that  also $\mathcal{U}_W(T^{F,G}) = \mathcal{U}_{\tilde{W}}(T^{F,G})$ as Fr\'echet spaces for all $F, G \in \mathcal{F}(\mathbb{R})$.
\end{proof}
Lemma \ref{reduction} will allow us to work with the conditions $(\tilde{\alpha})$, $(\tilde{N})$, and $(\tilde{\omega})$ instead of $(\alpha)$, $(N)$, and $(\omega)$. This will simplify the notation a bit. We also need the following two lemmas.
\begin{lemma}\label{wsuba}
Let  \(W = (w_N)_{N\in \mathbb{N}}\) be a weight function system  satisfying $(\tilde{\alpha})$. For all $N \in \mathbb{N}$ there is $A_N > 1$ such that 
$$
w_N(t+s) \leq w_{N+1}(t) + w_{N+1}(s) + \log A_N, \qquad \forall t,s \geq 0.
$$
\end{lemma}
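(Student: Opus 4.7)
The plan is to exploit the trivial bound $t+s \leq 2\max(t,s)$, apply the hypothesis $(\tilde{\alpha})$ once to absorb the factor of $2$, and then split the resulting expression using the fact that weight functions are nonnegative.

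More precisely, fix $N \in \mathbb{N}$ and let $A > 1$ be the constant from $(\tilde{\alpha})$ associated to $N$, so that $w_N(2t) \leq w_{N+1}(t) + \log A$ for all $t \geq 0$. For arbitrary $t,s \geq 0$, I would write
\[
w_N(t+s) \leq w_N(2\max(t,s)) \leq w_{N+1}(\max(t,s)) + \log A,
\]
using monotonicity of $w_N$ in the first inequality and $(\tilde{\alpha})$ in the second. Since weight functions take values in $[0,\infty)$ and are nondecreasing, one has $w_{N+1}(\max(t,s)) \leq w_{N+1}(t) + w_{N+1}(s)$: indeed, whichever of $t,s$ realizes the maximum contributes the full value $w_{N+1}(\max(t,s))$, and the other term is nonnegative. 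Combining the two displays yields the claim with $A_N := A$.

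There is essentially no obstacle here; the lemma is a routine consequence of $(\tilde{\alpha})$ and the nonnegativity of weight functions, recorded for later use (presumably to handle translations in arguments involving the Cauchy transform or convolution-type estimates on strips).
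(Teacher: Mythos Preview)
Your argument is correct and essentially identical to the paper's proof: both use $t+s\le 2\max(t,s)$, apply $(\tilde{\alpha})$ to pass from $w_N(2\max(t,s))$ to $w_{N+1}(\max(t,s))+\log A$, and then bound $w_{N+1}(\max(t,s))\le w_{N+1}(t)+w_{N+1}(s)$ by nonnegativity.
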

\begin{proof}
Let $N \in \mathbb{N}$ be arbitrary. There is $A_N >1$ such that  $w_N(2t) \leq w_{N+1}(t) + \log A_N$ for all $t \geq 0$. Hence,  for all $t,s \geq 0$ 
$$
w_N(t+s)  \leq w_N(2 \max\{t,s\}) \leq  w_{N+1}(\max\{t,s\}) + \log A_N \leq  w_{N+1}(t) + w_{N+1}(s) + \log A_N.
$$
\end{proof}
\begin{lemma}\label{N for sequences}
    Let \(W=(w_N)_{N\in\mathbb{N}}\) be a weight function system that satisfies \((\tilde{\alpha})\) and \((\tilde{N})\). Then, $w_N(j)-w_{N+2}(j) \to - \infty$ as $j \to \infty$.
\end{lemma}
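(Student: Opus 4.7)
The plan is to combine the integrability hypothesis $(\tilde{N})$ applied at level $N+1$ with the doubling hypothesis $(\tilde{\alpha})$ applied at level $N$, exploiting monotonicity of the $w_M$ to obtain a uniform lower bound for the integrand on a window of length comparable to $j$.

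More concretely, I would proceed as follows. Fix $N$, and let $A_N > 1$ be such that $w_N(2t) \leq w_{N+1}(t) + \log A_N$ for all $t \geq 0$, using $(\tilde{\alpha})$. For $j \geq 1$ and $t \in [j/2, j]$, monotonicity of $w_{N+1}$ together with $(\tilde\alpha)$ gives
\[
w_{N+1}(t) \geq w_{N+1}(j/2) \geq w_N(j) - \log A_N,
\]
while monotonicity of $w_{N+2}$ yields $w_{N+2}(t) \leq w_{N+2}(j)$. Combining these,
\[
w_{N+1}(t) - w_{N+2}(t) \;\geq\; w_N(j) - w_{N+2}(j) - \log A_N, \qquad t \in [j/2, j].
\]
Exponentiating and integrating over $[j/2, j]$, which has length $j/2$, produces the lower bound
\[
\int_{j/2}^{j} e^{w_{N+1}(t) - w_{N+2}(t)}\,\mathrm{d}t \;\geq\; \frac{j}{2 A_N} \, e^{w_N(j) - w_{N+2}(j)}.
\]

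Now apply $(\tilde{N})$ with the index $N+1$: the integral $\int_{0}^{\infty} e^{w_{N+1}(t) - w_{N+2}(t)}\,\mathrm{d}t$ is finite, hence the tail $\int_{j/2}^{j} e^{w_{N+1}(t) - w_{N+2}(t)}\,\mathrm{d}t$ tends to $0$ as $j \to \infty$. Plugging this into the inequality above forces $j \, e^{w_N(j) - w_{N+2}(j)} \to 0$, which in particular implies $w_N(j) - w_{N+2}(j) \to -\infty$, as claimed.

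There is no real obstacle here; the only subtle point is that $(\tilde{N})$ alone does not yield pointwise divergence of $w_N - w_{N+1}$ to $-\infty$ (the integrand could have narrow spikes), so it is essential to smear the pointwise difference $w_N(j) - w_{N+2}(j)$ over the whole interval $[j/2, j]$ using $(\tilde\alpha)$ before invoking integrability. This is exactly what the lower bound above accomplishes, and it is also why the statement involves $w_{N+2}$ rather than $w_{N+1}$: one factor of monotonicity in $N$ is consumed when trading $w_N(j)$ for $w_{N+1}(t)$ via doubling.
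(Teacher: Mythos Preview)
Your argument is correct. The paper's proof follows the same underlying mechanism but with different bookkeeping: it applies $(\tilde N)$ at level $N$ (rather than $N+1$), partitions $[0,\infty)$ into unit intervals $[j,j+1]$, uses monotonicity to bound $\int_j^{j+1} e^{w_N(t)-w_{N+1}(t)}\,\mathrm{d}t \geq e^{w_N(j)-w_{N+1}(j+1)}$, and then invokes the subadditivity lemma $w_{N+1}(j+1) \leq w_{N+2}(j) + w_{N+2}(1) + \log A_{N+1}$ to reach $\sum_j e^{w_N(j)-w_{N+2}(j)} < \infty$. Your dyadic window $[j/2,j]$ together with the raw doubling inequality $w_{N+1}(j/2) \geq w_N(j) - \log A_N$ avoids the detour through subadditivity and even yields the slightly sharper conclusion $j\,e^{w_N(j)-w_{N+2}(j)} \to 0$; the paper's version, on the other hand, gives summability of $e^{w_N(j)-w_{N+2}(j)}$. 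For the purpose at hand both conclusions are more than sufficient, and both proofs consume one index step in the same place, explaining the appearance of $N+2$.
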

\begin{proof}
Let $A_N >1$, $N \in \mathbb{N}$,  be as in Lemma \ref{wsuba}. Then, for all \(N\in\mathbb{N}\) 
\begin{align*}
\infty >    \int_{0}^{\infty} e^{w_N(t)-w_{N+1}(t)}\mathrm{d}t = \sum_{j=0}^{\infty}\int_{j}^{j+1}e^{w_N(t)-w_{N+1}(t)}\mathrm{d}t &\geq \sum_{j=0}^{\infty}e^{w_N(j)-w_{N+1}(j+1)} \\
&\geq \frac{1}{A_{N+1}e^{w_{N+2}(1)}} \sum_{j=0}^{\infty} e^{w_N(j)-w_{N+2}(j)}.
\end{align*}
This implies the result.
\end{proof}

\subsubsection{The weighted Cauchy transform}\label{WCT} Throughout this paragraph, we fix \(F,G \in \mathcal{F}(\mathbb{R})\) and \(h>0\) such that $\overline{T^{F,G}} \subseteq T^h$. Furthermore, we  fix a weight function system \(W= (w_N)_{N\in\mathbb{N}}\)  that satisfies \((\tilde{\alpha})\) and  \((\tilde{N})\).  We need the following Banach spaces of holomorphic functions.

\begin{definition}
For $N \in \mathbb{N}$ we define the Banach spaces
 \[
 \mathcal{H}_N(T^h\setminus \overline{T^{F,G}}) = \left\{U\in\mathcal{H}(T^h\setminus \overline{T^{F,G}}) \mid |U|_{N,F,G} = \sup_{z\in T^h\setminus \overline{T^{F,G}}} e^{-w_N(\lvert \mathrm{Re}z\rvert)} \lvert U(z)\rvert < \infty\right\} 
 \]
 and
 \[
 \mathcal{U}_N(\overline{T^{F,G}}) = \left\{\varphi\in \mathcal{H}(T^{F,G})\cap C(\overline{T^{F,G}}) \mid \|\varphi\|_{N,F,G}= \sup_{z\in\overline{T^{F,G}}} e^{w_N(\lvert \mathrm{Re} z\rvert)} 
\lvert \varphi(z)\rvert < \infty\right\}.
 \]
 \end{definition}
 Let \(\Phi, \Psi \in \mathcal{F}(\mathbb{R})\) with \(\Phi<F\) and \(\Psi<G\), and let \(N\in\mathbb{N}\). 
We define $(\mathcal{U}_W(T^{F,G}))^{\prime}_{N,\Phi,\Psi}$ as the Banach space consisting of all $f \in (\mathcal{U}_W(T^{F,G}))^{\prime}$ such that
$$
\|f\|^*_{N,\Phi,\Psi} = \sup \{ | \langle f, \varphi \rangle| \mid {\varphi \in \mathcal{U}_W(T^{F,G}), \|\varphi\|_{N,\Phi,\Psi} \leq 1}\} <\infty.
 $$
By Remark \ref{stupid-remark}$(i)$, it holds that  for every $f  \in (\mathcal{U}_W(T^{F,G}))^{\prime}$ there are $\Phi,\Psi\in\mathcal{F}(\mathbb{R})$ with $\Phi<F$ and $\Psi<G$, and $N\in \mathbb{N}$ such that $f \in (\mathcal{U}_W(T^{F,G}))^{\prime}_{N,\Phi,\Psi}$.

Next, we define the weighted Cauchy transform.
\begin{definition}\label{weighted Cauchy transform} 
Let \(P \in \mathcal{U}_W(\mathbb{C})\). We define \emph{the weighted Cauchy transform of \(f\in (\mathcal{U}_W(T^{F,G}))^{\prime}\) (with respect to $P$)} as
    \[
    \mathrm{C}_P(f)(\xi) = \frac{1}{2\pi i}\left \langle f(z), \frac{P(\xi-z)}{\xi-z} \right\rangle, \qquad \xi \in \mathbb{C}\setminus \overline{T^{F,G}}.
    \]
\end{definition}
Since \(W\) satisfies \((\alpha)\), the space \(\mathcal{U}_W(\mathbb{C})\) is translation-invariant. Hence, the function $z \mapsto  P(\xi-z)/(\xi-z)$ belongs to $\mathcal{U}_W(T^{F,G})$ for all  $\xi \in \mathbb{C}\setminus \overline{T^{F,G}}$. This implies that   $\mathrm{C}_P(f)$ is well-defined and holomorphic on \(\mathbb{C}\setminus \overline{T^{F,G}}\) for all \(f\in (\mathcal{U}_W(T^{F,G}))^{\prime}\). In fact, we can say more:
\begin{lemma}\label{unique analytic extension}
  Let \(P \in \mathcal{U}_W(\mathbb{C})\), let \(\Phi,\Psi \in \mathcal{F}(\mathbb{R})\) with \(\Phi<F\) and \(\Psi<G\). Take \(f\in (\mathcal{U}_W(T^{F,G}))^{\prime}_{N, \Phi,\Psi}\). Then, \(\mathrm{C}_P(f)\) extends uniquely to a holomorphic function on \(\mathbb{C}\setminus \overline{T^{\Phi,\Psi}}\). 
\end{lemma}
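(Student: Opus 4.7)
The plan is to extend the functional $f$ via Hahn--Banach to a Banach space containing both $\mathcal{U}_W(T^{F,G})$ and the functions $z \mapsto P(\xi - z)/(\xi - z)$ for $\xi \in \mathbb{C} \setminus \overline{T^{\Phi, \Psi}}$, and to define the analytic continuation of $\mathrm{C}_P(f)$ by applying this extended functional pointwise. By Lemma~\ref{reduction}, we may assume $W$ satisfies $(\tilde{\alpha})$ and $(\tilde{N})$, so that Lemmas~\ref{wsuba} and \ref{N for sequences} are at our disposal. Let $X := \mathcal{U}_N(\overline{T^{\Phi, \Psi}})$, equipped with $\|\cdot\|_{N, \Phi, \Psi}$. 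By Remark~\ref{stupid-remark}$(ii)$ there is $a \in (0,1)$ with $\overline{T^{\Phi, \Psi}} \subseteq \overline{T^{aF, aG}}$; the restriction map $\mathcal{U}_W(T^{F,G}) \to X$ is therefore continuous, and $f$ is a bounded linear functional on a subspace of $X$. Hahn--Banach yields an extension $\tilde{f} \in X'$, and we set
\[
\widetilde{\mathrm{C}_P(f)}(\xi) := \frac{1}{2\pi i}\, \tilde{f}(\Psi_\xi), \qquad \Psi_\xi(z) := \frac{P(\xi - z)}{\xi - z}, \qquad \xi \in \mathbb{C} \setminus \overline{T^{\Phi, \Psi}}.
\]

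The heart of the argument --- and the main obstacle --- is to show that $\Psi_\xi \in X$ with local uniform control in $\xi$: for every compact $K \subseteq \mathbb{C} \setminus \overline{T^{\Phi, \Psi}}$ one has $\sup_{\xi \in K} \|\Psi_\xi\|_{N, \Phi, \Psi} < \infty$. Fix $\xi \in K$ and $z \in \overline{T^{\Phi, \Psi}}$. Then $|\xi - z| \geq \mathrm{dist}(K, \overline{T^{\Phi, \Psi}}) > 0$, and $|\mathrm{Im}(\xi - z)|$ is bounded by a constant depending only on $K$, $\Phi$, $\Psi$, so $\xi - z$ stays in some fixed horizontal strip $T_h$. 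Since $P \in \mathcal{U}_W(\mathbb{C}) \subseteq \mathcal{U}_W(T_h)$, for every index $M$ one has $|P(\xi - z)| \leq C_M e^{-w_M(|\mathrm{Re}(\xi - z)|)}$. Combining this with the estimate
\[
w_N(|\mathrm{Re}\, z|) \leq w_{N+1}(|\mathrm{Re}(\xi - z)|) + w_{N+1}(|\mathrm{Re}\, \xi|) + \log A_N
\]
from Lemma~\ref{wsuba}, and choosing $M$ large enough that $w_{N+1}(t) - w_M(t) \to -\infty$ (which is possible by Lemma~\ref{N for sequences} together with monotonicity and $(\tilde{\alpha})$), produces the required uniform bound.

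With this estimate in hand, the same argument applied to difference quotients $(\Psi_{\xi+h}-\Psi_\xi)/h$ shows that $\xi \mapsto \Psi_\xi$ is holomorphic as a map $\mathbb{C} \setminus \overline{T^{\Phi, \Psi}} \to X$; composing with the continuous linear functional $\tilde{f}$ then gives that $\widetilde{\mathrm{C}_P(f)}$ is holomorphic on $\mathbb{C} \setminus \overline{T^{\Phi, \Psi}}$ and, by construction, coincides with $\mathrm{C}_P(f)$ on $\mathbb{C} \setminus \overline{T^{F,G}}$. Uniqueness is then automatic from the identity theorem: since $\Phi < F$ and $\Psi < G$, each connected component of $\mathbb{C} \setminus \overline{T^{\Phi, \Psi}}$ contains an entire connected component of $\mathbb{C} \setminus \overline{T^{F,G}}$, so any two holomorphic extensions must agree. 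This simultaneously shows that the extension does not depend on the particular Hahn--Banach extension $\tilde{f}$ chosen.
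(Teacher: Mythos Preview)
Your proof is correct and follows essentially the same route as the paper: extend $f$ by Hahn--Banach to a functional $\tilde{f}$ on $\mathcal{U}_N(\overline{T^{\Phi,\Psi}})$, define the extension of $\mathrm{C}_P(f)$ by pairing $\tilde{f}$ with $z\mapsto P(\xi-z)/(\xi-z)$, and invoke the identity theorem for uniqueness. Two minor remarks: the standing hypotheses of the subsection already include $(\tilde{\alpha})$ and $(\tilde{N})$, so your appeal to Lemma~\ref{reduction} is unnecessary; and the symbol $\Psi$ is already in use for the boundary function, so $\Psi_\xi$ is an unfortunate choice of notation.
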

\begin{proof} 
By the Hahn-Banach theorem,  there exists \(\tilde{f} \in  (\mathcal{U}_N(\overline{T^{\Phi,\Psi}}))'\) such that \(\tilde{f}_{\mid\mathcal{U}_W(T^{F,G})} = f\). The function 
$$
\displaystyle U(\xi) = \frac{1}{2\pi i}\left\langle \tilde{f}(z), \frac{P(\xi-z)}{\xi-z}\right\rangle, \qquad  \xi \in \mathbb{C}\setminus \overline{T^{\Phi,\Psi}},
$$
 is holomorphic on \(\mathbb{C}\setminus \overline{T^{\Phi,\Psi}}\) and coincides with \(\mathrm{C}_P(f)\) on \(\mathbb{C}\setminus \overline{T^{F,G}}\). The uniqueness of the  extension $U$ follows from the uniqueness principle for holomorphic functions.
\end{proof}
Let \(f\in (\mathcal{U}_W(T^{F,G}))^{\prime}_{N, \Phi,\psi}\). From now on, we will view \(\mathrm{C}_P(f)\) as a holomorphic function on \(\mathbb{C}\setminus \overline{T^{\Phi,\Psi}}\).
\begin{proposition}\label{continuity weighted Cauchy transform}
   Let \(P \in \mathcal{U}_W(\mathbb{C})\), let \(\Phi,\Psi,\tilde{\Phi},\tilde{\Psi} \in \mathcal{F}(\mathbb{R})\) with \(\Phi<\tilde{\Phi}<F\) and \(\Psi<\tilde{\Psi}<G\), and let $N \in \mathbb{N}$. The map 
\[
\mathrm{C}_P: (\mathcal{U}_W(T^{F,G}))^{\prime}_{N,\Phi,\Psi} \rightarrow \mathcal{H}_{N+1}\left(T^h\setminus \overline{T^{\tilde{\Phi},\tilde{\Psi}}}\right),\,  f\mapsto \mathrm{C}_P(f)
\]
is well-defined and continuous.
\end{proposition}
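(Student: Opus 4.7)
The plan is to reduce the proposition to an explicit pointwise bound on the Cauchy kernel $z\mapsto P(\xi-z)/(\xi-z)$ in the norm $\|\cdot\|_{N,\Phi,\Psi}$. Given $f\in(\mathcal{U}_W(T^{F,G}))'_{N,\Phi,\Psi}$, Lemma \ref{unique analytic extension} (via Hahn--Banach applied to the continuous embedding $\mathcal{U}_W(T^{F,G})\hookrightarrow\mathcal{U}_N(\overline{T^{\Phi,\Psi}})$) produces an extension $\tilde f$ of dual norm at most $\|f\|^*_{N,\Phi,\Psi}$ and realises $\mathrm{C}_P(f)$ as a holomorphic function on $\mathbb{C}\setminus\overline{T^{\Phi,\Psi}}$. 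Since $\Phi<\tilde\Phi$ and $\Psi<\tilde\Psi$ imply $\overline{T^{\Phi,\Psi}}\subseteq T^{\tilde\Phi,\tilde\Psi}$, the target domain $T^h\setminus\overline{T^{\tilde\Phi,\tilde\Psi}}$ lies inside this extended domain, so it suffices to bound, uniformly for $\xi\in T^h\setminus\overline{T^{\tilde\Phi,\tilde\Psi}}$,
$$
\sup_{z\in\overline{T^{\Phi,\Psi}}} e^{w_N(|\mathrm{Re}\,z|)}\frac{|P(\xi-z)|}{|\xi-z|}\le C\,e^{w_{N+1}(|\mathrm{Re}\,\xi|)},
$$
with $C$ depending only on $P,N,\Phi,\Psi,\tilde\Phi,\tilde\Psi,h$.

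Two ingredients close this estimate. Firstly, the definitions of $\Phi<\tilde\Phi$ and $\Psi<\tilde\Psi$ (applied in turn to the upper and lower boundary graphs, and taking the minimum of the two resulting radii) produce a uniform $\varepsilon>0$ such that $|\xi-z|\ge\varepsilon$ whenever $z\in\overline{T^{\Phi,\Psi}}$ and $\xi\notin T^{\tilde\Phi,\tilde\Psi}$; this controls the denominator. Secondly, the inclusions $\overline{T^{\Phi,\Psi}}\subseteq\overline{T^{F,G}}\subseteq T^h$ together with $\xi\in T^h$ force $\xi-z\in\overline{T^{2h}}$, and since $P\in\mathcal{U}_W(\mathbb{C})\subseteq\mathcal{U}_W(T^{4h})$, Lemma \ref{lemma-df} supplies a constant $K>0$ with $|P(\zeta)|\le K e^{-w_{N+1}(|\mathrm{Re}\,\zeta|)}$ for every $\zeta\in\overline{T^{2h}}$.

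These ingredients are glued together by Lemma \ref{wsuba}, which furnishes $A_N>1$ satisfying $w_N(s+t)\le w_{N+1}(s)+w_{N+1}(t)+\log A_N$. Applied to the elementary inequality $|\mathrm{Re}\,z|\le|\mathrm{Re}\,\xi|+|\mathrm{Re}(\xi-z)|$, it gives
$$
e^{w_N(|\mathrm{Re}\,z|)}|P(\xi-z)|\le A_N K\,e^{w_{N+1}(|\mathrm{Re}\,\xi|)},
$$
since the factor $e^{w_{N+1}(|\mathrm{Re}(\xi-z)|)}$ produced by Lemma \ref{wsuba} cancels exactly against the decay of $P$. Combining with $|\xi-z|\ge\varepsilon$ yields $|\mathrm{C}_P(f)(\xi)|\le \tfrac{A_N K}{2\pi\varepsilon}\|f\|^*_{N,\Phi,\Psi}\,e^{w_{N+1}(|\mathrm{Re}\,\xi|)}$, which simultaneously shows $\mathrm{C}_P(f)\in\mathcal{H}_{N+1}(T^h\setminus\overline{T^{\tilde\Phi,\tilde\Psi}})$ and that $\mathrm{C}_P$ is continuous with operator norm at most $\tfrac{A_N K}{2\pi\varepsilon}$. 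No step presents a serious obstacle; the only bookkeeping point worth highlighting is that the ``$+1$'' shift of the target index is dictated by the subadditivity loss in Lemma \ref{wsuba}, which is precisely why the decay of $P$ must be invoked at the same index $w_{N+1}$.
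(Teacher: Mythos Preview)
Your proof is correct and follows essentially the same route as the paper: extend $f$ via Hahn--Banach to $\tilde f\in(\mathcal{U}_N(\overline{T^{\Phi,\Psi}}))'$ with the same norm, bound the kernel $z\mapsto P(\xi-z)/(\xi-z)$ in $\|\cdot\|_{N,\Phi,\Psi}$ using the separation $\varepsilon$ for the denominator and the decay of $P$ on a horizontal strip for the numerator, and glue these via the subadditivity Lemma~\ref{wsuba}. The only cosmetic difference is that the paper picks an auxiliary $\tilde h$ with $T^h-\overline{T^{\Phi,\Psi}}\subseteq T^{\tilde h}$ and uses $\|P\|_{N+1,\tilde h,\tilde h}$, whereas you use $P\in\mathcal{U}_W(T^{4h})$ and Lemma~\ref{lemma-df} on $\overline{T^{2h}}$; both produce the same constant structure.
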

\begin{proof} Let $f \in  (\mathcal{U}_W(T^{F,G}))^{\prime}_{N,\Phi,\Psi}$ be arbitrary. By the Hahn-Banach theorem,  there exists \(\tilde{f} \in  (\mathcal{U}_N(\overline{T^{\Phi,\Psi}}))'\) such that \(\tilde{f}_{\mid\mathcal{U}_W(T^{F,G})} = f\) and 
\begin{equation}
\label{HB}
\|f\|^*_{N,\Phi,\Psi} = \sup \{ | \langle \tilde{f}, \psi \rangle| \mid {\psi \in \mathcal{U}_N(\overline{T^{\Phi,\Psi}}), \|\psi\|_{N,\Phi,\Psi} \leq 1}\}.
\end{equation}
From the proof of Lemma \ref{unique analytic extension}, it  follows that
$$
\mathrm{C}_P(f)(\xi) = \frac{1}{2\pi i}\left\langle \tilde{f}(z), \frac{P(\xi-z)}{\xi-z}\right\rangle, \qquad  \xi \in \mathbb{C}\setminus \overline{T^{\Phi,\Psi}}.
$$
Let $A_N >1$, $N \in \mathbb{N}$,  be as in Lemma \ref{wsuba}.  Let \(\tilde{h}>0\)  be such that \(T^h-\overline{T^{\Phi,\Psi}}\subseteq T^{\tilde{h}}\) and let \(\varepsilon > 0\) be such that \(\overline{T^{\Phi,\Psi}}+\overline{B}(0,\varepsilon)\subseteq T^{\tilde{\Phi},\tilde{\Psi}}\). By \eqref{HB}, we find that for all \(\xi \in T^h\setminus \overline{T^{\tilde{\Phi},\tilde{\Psi}}}\)
    \begin{align*}
       \lvert \mathrm{C}_P(f)(\xi)\rvert &=\frac{1}{2\pi} \left\lvert\left\langle\tilde{f}(z), \frac{P(\xi-z)}{\xi-z}\right\rangle\right\rvert\\
        &\leq \frac{1}{2\pi} \|f\|^{*}_{N,\Phi,\Psi} \sup_{z\in \overline{T^{\Phi,\Psi}}} e^{w_N(\lvert \mathrm{Re}z\rvert)}\frac{\lvert P(\xi-z)\rvert}{\lvert \xi-z\rvert} \\
        &\leq \frac{A_N \|P\|_{N+1,\tilde{h},\tilde{h}}}{2\pi \varepsilon} \|f\|^{*}_{N,\Phi,\Psi} e^{w_{N+1}(\lvert \mathrm{Re}\xi\rvert)}.
    \end{align*}   
   This implies that \(\mathrm{C}_P(f)\in\mathcal{H}_{N+1}(T^h\setminus \overline{T^{\tilde{\Phi},\tilde{\Psi}}})\) and the desired continuity of \(\mathrm{C}_P\).
\end{proof}
We now discuss boundary values of holomorphic functions in $(\mathcal{U}_W(T^{F,G}))'$. Recall from Definition \ref{gstrips} that  \(\mathcal{F}_1(\mathbb{R})\) consists of all \(F \in \mathcal{F}(\mathbb{R}) \cap C^1(\mathbb{R})\) with  $\sup_{t\in \mathbb{R}} |F'(t)| < \infty$. For \(\Phi\in \mathcal{F}_1(\mathbb{R})\)  we define the curves
    \[
    \Gamma^\pm_{\Phi}=\{t\pm i\Phi(t) \mid t\in\mathbb{R}\} 
    \]
    and orient \(\Gamma^+_{\Phi}\) from \enquote{right to left} and \(\Gamma^-_{\Phi}\) from \enquote{left to right}.
\begin{proposition}\label{bvbefore}
 Let \(\Phi,\Psi\in \mathcal{F}(\mathbb{R})\) with \(\Phi<F\) and \(\Psi<G\), and let  \(N\in\mathbb{N}\). Take  \(U\in \mathcal{H}_N(T^h\setminus \overline{T^{\Phi,\Psi}})\).
    \begin{enumerate}
        \item[$(i)$] Let \(\tilde{\Phi},\tilde{\Psi}\in\mathcal{F}_1(\mathbb{R})\) with \(\Phi<\tilde{\Phi}<F\) and \(\Psi<\tilde{\Psi}<G\). Define 
        \[
\langle \mathrm{bv}_{\tilde{\Phi},\tilde{\Psi}}(U), \varphi\rangle = \int_{\Gamma^+_{\tilde{\Phi}}}U(z)\varphi(z)\mathrm{d}z+\int_{\Gamma^-_{\tilde{\Psi}}}U(z)\varphi(z)\mathrm{d}z, \qquad \varphi \in \mathcal{U}_W(T^{F,G}).
\]
Then, $\mathrm{bv}_{\tilde{\Phi},\tilde{\Psi}}(U) \in (\mathcal{U}_W(T^{F,G}))'$ and 
$$
\|\mathrm{bv}(U)\|^*_{N+1,\tilde{\Phi},\tilde{\Psi}} \leq C \ \sup_{\xi\in\Gamma^+_{{\tilde{\Phi}}} \cup \Gamma^-_{{\tilde{\Psi}}} } e^{-w_N(\lvert\mathrm{Re}\xi\rvert)}\lvert U(\xi)\rvert
$$ with 
$$
\displaystyle C = (2+ \sup_{t\in\mathbb{R}}\lvert \tilde{\Phi}^{\prime}(t)\rvert +\sup_{t\in\mathbb{R}}\lvert {\tilde{\Psi}}^{\prime}(t)\rvert ) \int_0^{\infty}e^{w_N(t)-w_{N+1}(t)}\mathrm{d}t < \infty.
$$
  \item[$(ii)$] Let \(\tilde{\Phi}_k,\tilde{\Psi}_k\in\mathcal{F}_1(\mathbb{R})\) with \(\Phi<\tilde{\Phi}_k<F\) and \(\Psi<\tilde{\Psi}_k<G\) for $k =1,2$. Then, $\mathrm{bv}_{\tilde{\Phi}_1,\tilde{\Psi}_1}(U) = \mathrm{bv}_{\tilde{\Phi}_2,\tilde{\Psi}_2}(U)$.
    \end{enumerate}
\end{proposition}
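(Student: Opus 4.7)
The plan for part $(i)$ is a direct estimate. Parameterize $\Gamma^+_{\tilde{\Phi}}$ by $t\mapsto t+i\tilde{\Phi}(t)$, so that $|dz|\leq (1+|\tilde{\Phi}'(t)|)\,dt$; on this curve $|U(t+i\tilde{\Phi}(t))|\leq M\, e^{w_N(|t|)}$, where $M$ denotes the supremum on the right-hand side of the asserted inequality, and $|\varphi(t+i\tilde{\Phi}(t))|\leq \|\varphi\|_{N+1,\tilde{\Phi},\tilde{\Psi}}\, e^{-w_{N+1}(|t|)}$, since the point lies in $\overline{T^{\tilde{\Phi},\tilde{\Psi}}}\subseteq \overline{T^{F,G}}$. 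Treating $\Gamma^-_{\tilde{\Psi}}$ symmetrically, summing both contributions, and invoking $(\tilde{N})$ in the form $\int_0^{\infty} e^{w_N(t)-w_{N+1}(t)}\,dt<\infty$ delivers absolute convergence of the defining integrals, continuity of $\mathrm{bv}_{\tilde{\Phi},\tilde{\Psi}}(U)$ on $\mathcal{U}_W(T^{F,G})$, and a norm bound of the announced shape.

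For part $(ii)$, my plan is to establish the auxiliary fact that the value of $\int_{\Gamma^+_{\tilde{\Phi}}} U\varphi\,dz$ is independent of the choice of $\tilde{\Phi}\in\mathcal{F}_1(\mathbb{R})$ with $\Phi<\tilde{\Phi}<F$, and likewise for $\Gamma^-_{\tilde{\Psi}}$; together, these two statements imply $(ii)$. To reduce to the nested case I will invoke Lemma \ref{lipschitz} (applied to an auxiliary function majorizing $\tilde{\Phi}_1$ and $\tilde{\Phi}_2$) to produce a $\tilde{\Phi}_3\in\mathcal{F}_1(\mathbb{R})$ with $\max(\tilde{\Phi}_1,\tilde{\Phi}_2)<\tilde{\Phi}_3<F$ and compare each $\tilde{\Phi}_j$ with $\tilde{\Phi}_3$ in turn. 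When $\tilde{\Phi}_1<\tilde{\Phi}_2$, the strip lying between the two curves is contained in $T^h\setminus\overline{T^{\Phi,\Psi}}$ (domain of $U$) and in $T^{F,G}$ (domain of $\varphi$), so $U\varphi$ is holomorphic there, and Cauchy's theorem applied to the closed, rectangle-like contour obtained by truncating both curves at $\mathrm{Re}\,z=\pm T$ and joining them by vertical segments yields vanishing total integral.

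The main obstacle will be passing to the limit $T\to\infty$. The horizontal portions converge by the absolute integrability proved in part $(i)$, so the real work is in controlling the vertical pieces. On $\{T+is:\tilde{\Phi}_1(T)\leq s\leq \tilde{\Phi}_2(T)\}$, combining $|U|\leq M\, e^{w_N(T)}$ with $|\varphi|\leq \|\varphi\|_{N+1,\tilde{\Phi}_2,\tilde{\Psi}_0}\, e^{-w_{N+1}(T)}$ (for any fixed $\tilde{\Psi}_0<G$) bounds the integrand by a constant times $e^{w_N(T)-w_{N+1}(T)}$, while the length is bounded by $\sup_{t\in\mathbb{R}}(\tilde{\Phi}_2(t)-\tilde{\Phi}_1(t))<\infty$. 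Since $(\tilde{N})$ forces $\liminf_{T\to\infty} e^{w_N(T)-w_{N+1}(T)}=0$ (an integrable nonnegative function must have vanishing lower limit), I can extract $T_k\to\infty$ along which both vertical segments at $\pm T_k$ vanish in the limit, completing the argument. This subsequence step, driven by $(\tilde{N})$, is the only place where the weight-system hypothesis genuinely enters and is the heart of the proof.
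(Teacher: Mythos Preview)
Your proof is correct and follows the same overall structure as the paper's: for $(i)$ a direct parametrized estimate, and for $(ii)$ a reduction to the nested case via Lemma~\ref{lipschitz}, followed by Cauchy's theorem on truncated contours with control of the vertical segments.

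The one noteworthy difference is in how you dispose of the vertical segments. The paper estimates $|\varphi|$ on the vertical pieces using the seminorm with index $N+2$ (rather than $N+1$) and then invokes Lemma~\ref{N for sequences}, which combines $(\tilde{\alpha})$ and $(\tilde{N})$ to conclude that $w_N(j)-w_{N+2}(j)\to -\infty$ along the integers; this gives vanishing of the vertical contributions along the full sequence $j\to\infty$. Your argument instead stays at level $N+1$, uses only $(\tilde{N})$ to deduce $\liminf_{T\to\infty} e^{w_N(T)-w_{N+1}(T)}=0$, and passes to a subsequence $T_k\to\infty$. This is a legitimate and slightly more economical route: it avoids both $(\tilde{\alpha})$ and Lemma~\ref{N for sequences}, at the cost of working along a subsequence rather than the full limit. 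Either way the conclusion is the same.
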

\begin{proof}
$(i)$ For all \(\varphi \in \mathcal{U}_W(T^{F,G})\) it holds that
    \begin{align*}
        \lvert \langle \mathrm{bv}_{\tilde{\Phi},\tilde{\Psi}}(U), \varphi \rangle \rvert &= \left \lvert \int_{\Gamma^{{\tilde{\Phi}}}} U(z)\varphi(z)\mathrm{d}z + \int_{\Gamma^{{\tilde{\Psi}}}} U(z)\varphi(z)\mathrm{d}z\right\rvert \\
        &\leq \sup_{\xi\in\Gamma^{{\tilde{\Phi}}}}(e^{-w_N(\lvert\mathrm{Re}\xi\rvert)}\lvert U(\xi)\rvert) \|\varphi\|_{N+1,\tilde{\Phi},\tilde{\Psi}} \left(1+\sup_{t\in\mathbb{R}}\lvert {\tilde{\Phi}}^{\prime}(t)\rvert\right) \int_0^{\infty}e^{w_N(t)-w_{N+1}(t)}\mathrm{d}t \\
        &+ \sup_{\xi\in\Gamma^{{\tilde{\Psi}}}}(e^{-w_N(\lvert\mathrm{Re}\xi\rvert)}\lvert U(\xi)\rvert) \|\varphi\|_{N+1,\tilde{\Phi},\tilde{\Psi}}\left(1+\sup_{t\in\mathbb{R}}\lvert{\tilde{\Psi}}^{\prime}(t)\rvert\right) \int_0^{\infty}e^{w_N(t)-w_{N+1}(t)}\mathrm{d}t,
    \end{align*}
which implies the result. \\
$(ii)$ We will prove that 
$$\displaystyle\int_{\Gamma^+_{\tilde{\Phi}_1}}U(z)\varphi(z)\mathrm{d}z= \int_{\Gamma^+_{{\tilde{\Phi}_2}}} U(z)\varphi(z)\mathrm{d}z, \qquad \forall  \varphi\in\mathcal{U}_W(T^{F,G}),$$ the corresponding statement for the curves $\Gamma^-_{\tilde{\Psi}_1}$ and  $\Gamma^-_{\tilde{\Psi}_2}$  can be shown similarly. 
Set $\tilde{\Phi} = \max\{ \tilde{\Phi}_1, \tilde{\Phi}_2 \} \in \mathcal{F}(\mathbb{R})$ and note $\tilde{\Phi} < F$. By Remark \ref{stupid-remark}$(ii)$, there is $a \in (0,1)$ such that $\tilde{\Phi}(t) \leq a F(t)$ for all $t \in \mathbb{R}$. Lemma \ref{lipschitz} implies that there is  \(\Xi \in \mathcal{F}_1(\mathbb{R})\)  such that  \(\tilde{\Phi}<\Xi<F\)  and  thus \(\tilde{\Phi}_k<\Xi<F\) for $k =1,2$. Hence, we may assume without loss of generality that \(\tilde{\Phi}_1<\tilde{\Phi}_2<F\). For \(j\in\mathbb{N}\)  and $k =1,2$ we define the curves
        \begin{align*}
        \Gamma^{\tilde{\Phi}_k}_{j} = \{t+i\tilde{\Phi}_k(t)\mid -j\leq t\leq j\}, \qquad    \Gamma^{\pm}_{j}= \{\pm j +iy\mid {\tilde{\Phi}_1}(t)\leq y\leq\tilde{\Phi}_2(t)\}.
        \end{align*}
          Consider the closed curve $\Gamma_j = \Gamma^-_j\cup\Gamma^{\tilde{\Phi}_1}_j\cup\Gamma^+_j\cup\Gamma^{\tilde{\Phi}_2}_j$ and orient it counterclockwise. Let  \(\varphi\in\mathcal{U}_W(T^{F,G})\) be arbitrary. Note that there is $A > 0$ such that for all $j \in \mathbb{N}$
    \[
    \int_{\Gamma^\pm_j} \lvert U(z)\rvert \lvert \varphi(z)\rvert \lvert \mathrm{d}z\rvert \leq A e^{w_N(j)-w_{N+2}(j)}.
    \]
         Lemma \ref{N for sequences} implies that
         $$
         \lim_{j \to \infty} \int_{\Gamma^\pm_j} U(z) \varphi(z) \mathrm{d}z =0.
         $$
Hence,
        \[
        \int_{\Gamma^+_{\tilde{\Phi}_2}} U(z)\varphi(z)\mathrm{d}z - \int_{\Gamma^-_{\tilde{\Phi}_1}} U(z)\varphi(z) \mathrm{d}z = \lim_{j\to\infty} \int_{\Gamma_j}U(z)\varphi(z)\mathrm{d}z =0,
    \]
 where the last equality follows from  Cauchy's integral theorem. 
    \end{proof}
\begin{definition}\label{boundary value operator}
 Let \(\Phi,\Psi\in \mathcal{F}(\mathbb{R})\) with \(\Phi<F\) and \(\Psi<G\), and let  \(N\in\mathbb{N}\). For \(U\in \mathcal{H}_{N}(T^h\setminus \overline{T^{\Phi,\Psi}})\) we define
\[
\langle \mathrm{bv}(U), \varphi\rangle = \langle \mathrm{bv}_{\tilde{\Phi},\tilde{\Psi}}(U), \varphi\rangle =  \int_{\Gamma^+_{\tilde{\Phi}}}U(z)\varphi(z)\mathrm{d}z+\int_{\Gamma^-_{\tilde{\Psi}}}U(z)\varphi(z)\mathrm{d}z, \qquad \varphi \in \mathcal{U}_W(T^{F,G}),
\]
where \(\tilde{\Phi},\tilde{\Psi}\in\mathcal{F}_1(\mathbb{R})\) are such that \(\Phi<\tilde{\Phi}<F\) and \(\Psi<\tilde{\Psi}<G\) (such $\tilde{\Phi}$ and $\tilde{\Psi}$ exist by Lemma \ref{lipschitz}).  Proposition \ref{bvbefore} implies that $\mathrm{bv}(U) \in (\mathcal{U}_W(T^{F,G}))'$ and that the definition does not depend on the choice of  \(\tilde{\Phi}\) and \(\tilde{\Psi}\). 
\end{definition}
Finally, we explain that an element in \((\mathcal{U}_W(T^{F,G}))^{\prime}\) is the boundary value of its weighted Cauchy transform.
\begin{proposition}\label{reconstruction formula}
Let \(P\in\mathcal{U}_W(\mathbb{C})\) with \(P(0)=1\).  Then, $\mathrm{bv}(\mathrm{C}_P(f))=f$ for all  \(f\in (\mathcal{U}_W(T^{F,G}))^{\prime}\).
\end{proposition}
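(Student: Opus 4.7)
The strategy is to interpret the integral defining $\mathrm{bv}(\mathrm{C}_P(f))$ as a Bochner integral valued in the Banach space $\mathcal{U}_N(\overline{T^{\Phi,\Psi}})$, interchange it with a Hahn--Banach extension of $f$, and then reduce the resulting kernel to $\varphi$ by closing the contour and invoking Cauchy's integral formula together with $P(0)=1$. Fix $\varphi\in\mathcal{U}_W(T^{F,G})$. By Remark \ref{stupid-remark}$(i)$ there exist $\Phi,\Psi\in\mathcal{F}(\mathbb{R})$ with $\Phi<F$, $\Psi<G$, and $N\in\mathbb{N}$ such that $f\in(\mathcal{U}_W(T^{F,G}))'_{N,\Phi,\Psi}$. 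Extend $f$ via Hahn--Banach to $\tilde f\in(\mathcal{U}_N(\overline{T^{\Phi,\Psi}}))'$ (as in the proof of Lemma \ref{unique analytic extension}), and use Lemma \ref{lipschitz} to pick $\tilde\Phi,\tilde\Psi\in\mathcal{F}_1(\mathbb{R})$ with $\Phi<\tilde\Phi<F$ and $\Psi<\tilde\Psi<G$, along which to evaluate $\mathrm{bv}(\mathrm{C}_P(f))$ in the sense of Definition \ref{boundary value operator}.

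For the Fubini interchange I would show that, with $K_\xi(z):=P(\xi-z)/(\xi-z)$, the map $\xi\mapsto\varphi(\xi)K_\xi$ takes values in $\mathcal{U}_N(\overline{T^{\Phi,\Psi}})$ and is Bochner integrable along $\Gamma^+_{\tilde\Phi}\cup\Gamma^-_{\tilde\Psi}$. Since $\Phi<\tilde\Phi$ and $\Psi<\tilde\Psi$ force $|\xi-z|\geq\varepsilon>0$ uniformly, and Lemma \ref{wsuba} gives $w_N(|\mathrm{Re}\,z|)\leq w_{N+1}(|\mathrm{Re}\,\xi|)+w_{N+1}(|\mathrm{Re}(z-\xi)|)+O(1)$, one obtains an estimate of the form $\|K_\xi\|_{N,\Phi,\Psi}\leq C\,e^{w_{N+1}(|\mathrm{Re}\,\xi|)}$, which combined with the rapid decay of $\varphi$ and condition $(\tilde N)$ yields an integrable majorant. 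Pulling the continuous functional $\tilde f$ inside the Bochner integral then produces
\[
\langle\mathrm{bv}(\mathrm{C}_P(f)),\varphi\rangle=\frac{1}{2\pi i}\,\Bigl\langle \tilde f(z),\,\int_{\Gamma^+_{\tilde\Phi}\cup\Gamma^-_{\tilde\Psi}}\frac{P(\xi-z)\varphi(\xi)}{\xi-z}\,d\xi\Bigr\rangle.
\]

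Finally, I would identify the inner integral, as an element of $\mathcal{U}_N(\overline{T^{\Phi,\Psi}})$, with $2\pi i\,\varphi$. Closing the contour with the vertical segments $\Gamma^\pm_j$ at $\mathrm{Re}\,\xi=\pm j$, as in Proposition \ref{bvbefore}$(ii)$, Cauchy's integral formula on the closed contour $\Gamma_j$ combined with $P(0)=1$ gives $2\pi i\,\varphi(z)$ once $j>|\mathrm{Re}\,z|$, so the task reduces to showing that the two vertical contributions vanish in $\|\cdot\|_{N,\Phi,\Psi}$ as $j\to\infty$. This is handled by splitting into $|\mathrm{Re}\,z|\leq j/2$, $j/2\leq|\mathrm{Re}\,z|\leq 2j$, and $|\mathrm{Re}\,z|\geq 2j$, and combining the rapid decay of $\varphi(\pm j+iy)$, the weighted decay of $P$, and $(\tilde\alpha)$ to beat the factor $e^{w_N(|\mathrm{Re}\,z|)}$. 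Evaluating $\tilde f$ at the limit identity then yields $\langle\mathrm{bv}(\mathrm{C}_P(f)),\varphi\rangle=\langle\tilde f,\varphi\rangle=\langle f,\varphi\rangle$. The principal obstacle is the rigorous execution of the Fubini step: the Bochner integrability in $\mathcal{U}_N(\overline{T^{\Phi,\Psi}})$ must be established with an integrable norm majorant, which hinges on combining $(\tilde\alpha)$ (via Lemma \ref{wsuba}) with $(\tilde N)$ and the rapid decay of $\varphi$ along the boundary curves; once this is in place, the closing of the contour is a standard residue/limit argument.
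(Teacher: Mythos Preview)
Your approach mirrors the paper's: Hahn--Banach extension $\tilde f\in(\mathcal{U}_N(\overline{T^{\Phi,\Psi}}))'$, Bochner integrability of $\xi\mapsto\varphi(\xi)K_\xi$ in $\mathcal{U}_N(\overline{T^{\Phi,\Psi}})$ via $(\tilde\alpha)$ and $(\tilde N)$, interchange of $\tilde f$ with the integral, and Cauchy's integral formula together with $P(0)=1$. The one point where your plan goes astray is the claim that the vertical contributions vanish \emph{in the norm} $\|\cdot\|_{N,\Phi,\Psi}$. This fails as stated: the vertical segment at $\mathrm{Re}\,\xi=j$ runs from $-\tilde\Psi(j)$ to $\tilde\Phi(j)$ in the imaginary direction and therefore meets $\overline{T^{\Phi,\Psi}}$ (since $\Phi<\tilde\Phi$ and $\Psi<\tilde\Psi$). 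For $z\in\overline{T^{\Phi,\Psi}}$ with $\mathrm{Re}\,z=j$ the kernel $1/(\xi-z)$ is singular along the segment, so the vertical integral is not even defined at such $z$, let alone uniformly small. Your three-range splitting cannot salvage the middle range $j/2\leq|\mathrm{Re}\,z|\leq 2j$.

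The remedy is that uniform (norm) convergence is unnecessary. Once the Bochner integral $\int_{\Gamma^+_{\tilde\Phi}\cup\Gamma^-_{\tilde\Psi}}\varphi(\xi)K_\xi\,d\xi$ exists in $\mathcal{U}_N(\overline{T^{\Phi,\Psi}})$, it suffices to identify it with $\varphi$ \emph{pointwise}, since evaluation at each fixed $z\in\overline{T^{\Phi,\Psi}}$ is a continuous linear functional on that Banach space. For fixed $z$ and all $j>|\mathrm{Re}\,z|$ the denominator $|\xi-z|$ is bounded below on the vertical segments, the vertical contributions tend to zero as $j\to\infty$ exactly as in Proposition~\ref{bvbefore}$(ii)$, and the residue yields $\varphi(z)$. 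This is precisely how the paper organizes the argument: first the pointwise identity $\varphi(\xi)=I_+(\xi)+I_-(\xi)$ for $\xi\in\overline{T^{\Phi,\Psi}}$, then Bochner integrability, then apply $\tilde f$.
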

\begin{proof}
Let \(f\in(\mathcal{U}_W(T^{F,G}))^{\prime}\) be arbitrary. Choose \(\Phi,\Psi\in\mathcal{F}_1(\mathbb{R})\) and \(N\in \mathbb{N}\)  such that \(f \in (\mathcal{U}_W(T^{F,G}))^{\prime}_{N,\Phi,\Psi}\). By Lemma \ref{lipschitz}, there are \(\tilde{\Phi},\tilde{\Psi}\in\mathcal{F}_1(\mathbb{R})\) with \(\Phi<\tilde{\Phi}<F\) and \(\Psi<\tilde{\Psi}<G\).
Let \(\varphi\in \mathcal{U}_W(T^{F,G})\) be arbitrary. For \(\xi\in \overline{T^{\Phi,\Psi}}\) we define
\begin{equation}\label{vector-valued integrals0}
I_+(\xi) = \frac{1}{2\pi i}\int_{\Gamma^+_{\tilde{\Phi}}}\varphi(z)\frac{P(z-\xi)}{z-\xi}\mathrm{d}z, \qquad I_-(\xi)= \frac{1}{2\pi i}\int_{\Gamma^-_{\tilde{\Psi}}}\varphi(z)\frac{P(z-\xi)}{z-\xi}\mathrm{d}z.
\end{equation}
By using a similar argument as in the proof of Proposition \ref{bvbefore} but now using Cauchy's integral formula and $P(0) =1$  instead of Cauchy's integral theorem, we obtain that these integrals are convergent and that for all \(\xi\in \overline{T^{\Phi,\Psi}}\)
\begin{equation}\label{vector-valued integrals}
\varphi(\xi)=I_+(\xi)+I_-(\xi).
\end{equation}
Note that the function \(\displaystyle\Gamma^+_{j,\tilde{\Phi}}\cup\Gamma^-_{j,\tilde{\Psi}}\rightarrow  \mathcal{U}_N(\overline{T^{\Phi,\Psi}}), \, z \mapsto \varphi(z)\frac{P(z-\cdot)}{z-\cdot}\) is continuous. Moreover, as \(W\) satisfies \((\tilde{\alpha})\) and \((\tilde{N})\), it holds that
$$
\int_{\Gamma^+_{\tilde{\Phi}}}|\varphi(z)| \left \| \frac{P(z-\cdot)}{z-\cdot} \right\|_{N,\Phi,\Psi} |\mathrm{d}z| < \infty, \qquad \int_{\Gamma^-_{\tilde{\Psi}}}|\varphi(z)| \left \| \frac{P(z-\cdot)}{z-\cdot} \right\|_{N,\Phi,\Psi} |\mathrm{d}z| < \infty.
$$
Therefore, the integrals in \eqref{vector-valued integrals0} exist as $\mathcal{U}_N(\overline{T^{\Phi,\Psi}})$-valued Bochner integrals and we can view \eqref{vector-valued integrals} as an inequality in  $\mathcal{U}_N(\overline{T^{\Phi,\Psi}})$.
Since \(f\in(\mathcal{U}_W(T^{F,G}))^{\prime}_{N,\Phi,\Psi}\), the Hahn-Banach theorem implies that there is \(\tilde{f} \in  (\mathcal{U}_N(\overline{T^{\Phi,\Psi}}))'\) such that \(\tilde{f}_{\mid\mathcal{U}_W(T^{F,G})} = f\). From the proof of Lemma \ref{unique analytic extension}, it  follows that
$$
\mathrm{C}_P(f)(z) = \frac{1}{2\pi i}\left\langle \tilde{f}(\xi), \frac{P(z-\xi)}{z-\xi}\right\rangle, \qquad  z \in \mathbb{C}\setminus \overline{T^{\Phi,\Psi}}.
$$ 
We get that
\begin{align*}
\langle f,\varphi\rangle &= \langle \tilde{f},\varphi\rangle  = \langle \tilde{f}, I_+\rangle+ \langle \tilde{f},I_-\rangle\\
&= \frac{1}{2\pi i}\int_{\Gamma^+_{\tilde{\Phi}}}\varphi(z)\left\langle \tilde{f}(\xi),\frac{P(z-\xi)}{z-\xi}\right\rangle \mathrm{d}z + \frac{1}{2\pi i}\int_{\Gamma^-_{\Psi}}\varphi(z) \left\langle \tilde{f}(\xi),\frac{P(z-\xi)}{z-\xi}\right\rangle \mathrm{d}z \\
&= \int_{\Gamma^+_{\tilde{\Phi}}} \varphi(z) \mathrm{C}_P(f)(z)\mathrm{d}z + \int_{\Gamma^-_{\tilde{\Psi}}} \varphi(z) \mathrm{C}_P(f)(z)\mathrm{d}z  = \langle \mathrm{bv}(\mathrm{C}_P(f)),\varphi\rangle.
\end{align*}
\end{proof}
\subsubsection{A log-convex inequality for certain weighted supremum norms of holomorphic functions}
We start  with a generalization of the classical Hadamard three-circles theorem. 
\begin{lemma}\label{Hadamard three circle}
 Let \(\Omega_1, \Omega_2, \Omega_3\) be relatively compact open subsets of \(\mathbb{C}\) with \(\emptyset \not = \Omega_1 \subseteq \Omega_2\), \(\overline{\Omega_2} \subseteq \Omega_3\), and \(\Omega_3\) simply connected. There exists \(\theta \in (0, 1)\) such that
 \begin{equation*}
\sup_{z\in \Omega_2} \lvert f(z)\rvert \leq \left( \sup_{z\in\Omega_1} \lvert f(z)\rvert\right)^{\theta} \left(\sup_{z\in \Omega_3} \lvert f(z)\rvert\right)^{1-\theta}, \qquad  \forall f \in \mathcal{H}(\Omega_3).
 \end{equation*}
\end{lemma}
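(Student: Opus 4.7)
The plan is to reduce the generalized three-circles inequality to the classical Hadamard three-circles theorem by means of a conformal equivalence. First, I would dispose of trivial cases: if $M_3 := \sup_{\Omega_3}|f| = \infty$ there is nothing to show, and if $M_1 := \sup_{\Omega_1}|f| = 0$ then by the identity principle (applied on the connected set $\Omega_3$) $f$ vanishes identically on $\Omega_3$ and the inequality is trivial. So I may assume $0 < M_1 \leq M_3 < \infty$.

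Next, since $\Omega_3$ is open, simply connected, and bounded (being relatively compact in $\mathbb{C}$) with $\Omega_3 \neq \mathbb{C}$, the Riemann mapping theorem provides a biholomorphism $\phi \colon \Omega_3 \to \mathbb{D}$. Pick any $z_0 \in \Omega_1$ and, by post-composing $\phi$ with a disk automorphism, arrange that $\phi(z_0) = 0$. Set $\widetilde f = f \circ \phi^{-1}$ and $\widetilde \Omega_i = \phi(\Omega_i)$; all three suprema are preserved. Crucially, because $\overline{\Omega_2} \subseteq \Omega_3$ is compact and $\phi$ is a homeomorphism, $\phi(\overline{\Omega_2}) = \overline{\widetilde \Omega_2}$ is a compact subset of $\mathbb{D}$, so $R := \max_{z \in \overline{\widetilde \Omega_2}}|z| < 1$. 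Since $0 \in \widetilde \Omega_1$ is an interior point and $R > 0$ (as $\widetilde \Omega_2$ is open and contains $0$), I can choose $\rho > 0$ with $B(0,\rho) \subseteq \widetilde \Omega_1$ and $\rho < R$; both $\rho$ and $R$ depend only on $\Omega_1, \Omega_2, \Omega_3$ (and the choice of $\phi$), not on $f$.

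Now apply the classical Hadamard three-circles theorem to $\widetilde f$ on each annulus $\{\rho \leq |z| \leq r'\}$ with $R < r' < 1$: writing $M(r) = \sup_{|z|=r}|\widetilde f|$, the function $\log M$ is convex in $\log r$. Taking $r = R$ and letting $r' \to 1^-$ (using $\lim_{r' \to 1^-} M(r') = \sup_{\mathbb{D}}|\widetilde f| = M_3$) yields
\[
M(R) \leq M(\rho)^{\theta} M_3^{1-\theta}, \qquad \theta := \frac{\log R}{\log \rho} \in (0,1).
\]
By the maximum principle on $\overline{B(0,\rho)} \subseteq \widetilde \Omega_1$ one has $M(\rho) \leq M_1$, and the inclusion $\widetilde \Omega_2 \subseteq \overline{B(0,R)}$ combined with the maximum principle gives $\sup_{\Omega_2}|f| = \sup_{\widetilde \Omega_2}|\widetilde f| \leq M(R)$. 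Combining these bounds yields the claim with the fixed constant $\theta$.

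The routine but delicate step is verifying that $\theta \in (0,1)$ can be chosen independently of $f$, which amounts to checking that the choice $0 < \rho < R < 1$ is admissible after the conformal reduction (openness of $\widetilde \Omega_1$ at the origin and compactness of $\overline{\widetilde \Omega_2}$ in $\mathbb{D}$). I expect the only real subtlety to be in handling the boundary passage $r' \to 1^-$ in the three-circles inequality and confirming that the same $\theta$ serves for every $f \in \mathcal{H}(\Omega_3)$; all other steps are straightforward manipulations of maximum modulus and conformal invariance.
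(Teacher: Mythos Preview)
Your proposal is correct and follows essentially the same route as the paper: Riemann-map $\Omega_3$ to the unit disk sending a point of $\Omega_1$ to the origin, then apply the classical Hadamard three-circles theorem together with the maximum principle. The only cosmetic difference is that the paper fixes a third radius $s\in(R,1)$ once and for all and bounds $\sup_{|w|=s}|f_\Phi|$ by $\sup_{\mathbb{D}}|f_\Phi|$, whereas you send $r'\to 1^-$; this avoids your limit step but yields a slightly different (still $f$-independent) value of~$\theta$.
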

\noindent Lemma \ref{Hadamard three circle} is a special case of a result of Vogt \cite[Satz 5.1]{Hadamard}. For the reader's convenience, we include a short proof of this special case.
\begin{proof}
Fix \(x \in \Omega_1\). 
By the Riemann mapping theorem, there is a biholomorphic map \(\Phi \colon \Omega_3 \rightarrow B(0,1)\) with \(\Phi(x)=0\). 
Note that \(\Phi(\overline{\Omega_2})\subseteq B(0,1)\) is compact and that \(\Phi(\Omega_1)\) is an open neighborhood of \(0\). Hence, we can choose \(0< r < R<1\) such that \(\Phi(\overline{\Omega_2})\subseteq \overline{B}(0,R)\) and \(\overline{B}(0,r) \subseteq \Phi(\Omega_1)\). Fix \(R<s<1\) and set \(\theta = (\log s - \log R)/(\log s - \log r) \in (0,1)\). Let \(f \in \mathcal{H}(\Omega_3)\) be arbitrary and define \(f_{\Phi}= f\circ \Phi ^{-1}\in \mathcal{H}(B(0,1))\). The Hadamard three-circles theorem and the maximum principle for holomorphic functions imply that
\begin{align*}
    &\sup_{z\in \Omega_2} \lvert f(z)\rvert = \sup_{z\in \Omega_2} \lvert f_{\Phi}(\Phi(z))\rvert
    \leq \sup_{w\in \overline{B}(0,R)} \lvert f_{\Phi}(w)\rvert 
    = \sup_{w\in \partial B(0,R)} \lvert f_{\Phi}(w)\rvert \\
    &\leq \left(\sup_{w\in \partial B(0,r)} \lvert f_\Phi(w)\rvert\right)^{\theta} \left(\sup_{w\in\partial B(0,s)} \lvert f_\Phi(w)\rvert\right)^{1-\theta} \leq \left(\sup_{w\in \Phi(\Omega_1)} \lvert f_\Phi(w)\rvert\right)^{\theta} \left(\sup_{w\in B(0,1)} \lvert f_\Phi(w)\rvert\right)^{1-\theta} \\
    &= \left(\sup_{z\in\Omega_1}\lvert f(z)\rvert\right)^{\theta} \left(\sup_{z\in \Omega_3} \lvert f(z)\rvert\right)^{1-\theta}.
\end{align*}
\end{proof}
Given \(F \in \mathcal{F}(\mathbb{R})\) and \(h > \sup_{t\in \mathbb{R}}F(t)\), we define
    \[
    \Lambda^{h,\pm}_{F} =  \{z\in \mathbb{C}\mid F(\mathrm{Re}z) < \pm \mathrm{Im}z < h\},  \qquad  \Lambda^\pm_F = \{z\in \mathbb{C}\mid \pm \mathrm{Im}z > F(\mathrm{Re}z)\}. 
    \]
\begin{proposition}\label{holomorphic log-convex}
Let \(F_1,F_2,F_3 \in \mathcal{F}_1(\mathbb{R})\) with \(F_1<F_2<F_3\) and let \(W = (w_N)_{N\in\mathbb{N}}\) be a weight function system satisfying \((\tilde{\alpha})\) and \((\tilde{\omega})\). There is \(h > \sup_{t\in\mathbb{R}}F_3(t)\) such that  for all \(N \in \mathbb{N}\) and \(K \geq N+2\) there are \(\theta \in (0,1)\) and \(C > 0\) such that for all $f \in \mathcal{H}(\Lambda^{h,\pm}_{F_1})$
    \begin{equation*}
    \sup_{\xi \in \Gamma^\pm_{F_2}} e^{-w_{N+2}(\lvert \mathrm{Re}\xi\rvert)}\lvert f(\xi)\rvert \leq C \left(\sup_{\xi \in \Lambda^{h,\pm}_{F_1}} e^{-w_N(\lvert \mathrm{Re}\xi\rvert)} \lvert f(\xi)\rvert\right)^{1-\theta} \left(\sup_{\xi \in \Lambda^{h,\pm}_{F_3}} e^{-w_K(\lvert \mathrm{Re}\xi\rvert)} \lvert f(\xi)\rvert\right)^{\theta}.
    \end{equation*}
\end{proposition}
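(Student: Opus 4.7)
The plan is to apply Lemma \ref{Hadamard three circle} pointwise at each $\xi_0 \in \Gamma^+_{F_2}$ to nested rectangles that are translates of a fixed shape, and to combine the resulting bound with $(\tilde{\alpha})$ and $(\tilde{\omega})$. We treat only the $+$ case; the $-$ case follows by applying the $+$ case to $z \mapsto \overline{f(\overline{z})}$.

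\emph{Geometric setup.} By Remark \ref{stupid-remark}$(ii)$, there is $\varepsilon > 0$ with $F_1(t) + 4\varepsilon \leq F_2(t)$ and $F_2(t) + 4\varepsilon \leq F_3(t)$ for every $t \in \mathbb{R}$. Put $L = \max_j \sup_t |F_j'(t)|$, $R = \varepsilon/(1+L)$, $H = \sup_t(F_3(t) - F_2(t)) + 2\varepsilon$, and fix $h > \sup_t F_2(t) + H + 2\varepsilon$; in particular $h > \sup_t F_3(t)$. For $t_0 \in \mathbb{R}$ and an integer $k \geq 1$, consider the open rectangles
\[
\Omega_3(t_0) = (t_0-R,t_0+R) \times (F_2(t_0)-3\varepsilon,\, F_2(t_0)+H+\varepsilon),
\]
\[
\Omega_2(t_0) = (t_0-R/2,t_0+R/2) \times (F_2(t_0)-\varepsilon,\, F_2(t_0)+H),
\]
\[
\Omega_1^k(t_0) = (t_0-R/(4k),t_0+R/(4k)) \times (F_2(t_0)+H-\varepsilon/k,\, F_2(t_0)+H),
\]
which are translates (by $t_0 + iF_2(t_0)$) of a fixed triple depending only on $k$. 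Using the Lipschitz bounds on $F_1, F_3$ and the $4\varepsilon$-gaps, one checks, uniformly in $t_0$, that $\xi_0 = t_0 + iF_2(t_0) \in \Omega_2(t_0)$, $\Omega_1^k(t_0) \subseteq \Omega_2(t_0)$, $\overline{\Omega_2(t_0)} \subseteq \Omega_3(t_0)$, and moreover $\Omega_1^k(t_0) \subseteq \Lambda^{h,+}_{F_3}$ and $\Omega_3(t_0) \subseteq \Lambda^{h,+}_{F_1}$.

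\emph{Hadamard step and weight interpolation.} Translation invariance and Lemma \ref{Hadamard three circle} produce $\theta_k \in (0,1)$ independent of $t_0$ with
\[
|f(\xi_0)| \leq \sup_{\Omega_2(t_0)}|f| \leq \bigl(\sup_{\Omega_1^k(t_0)}|f|\bigr)^{\theta_k} \bigl(\sup_{\Omega_3(t_0)}|f|\bigr)^{1-\theta_k}
\]
for every $f \in \mathcal{H}(\Lambda^{h,+}_{F_1})$. Since $\overline{\Omega_2(t_0)}$ stays strictly inside $\Omega_3(t_0)$ while $\Omega_1^k(t_0)$ shrinks to a single point as $k \to \infty$, the explicit formula $\theta = (\log s - \log R_0)/(\log s - \log r_0)$ in the proof of Lemma \ref{Hadamard three circle} shows that the inner Riemann-image radius $r_0$ tends to $0$ while the outer radius $R_0$ remains bounded away from $1$, so $\theta_k \to 0$. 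Given $N \in \mathbb{N}$ and $K \geq N+2$, apply $(\tilde{\omega})$ with the pair $(N+1, K+1)$ (admissible since $K+1 \geq N+3$) to obtain $\theta^* \in (0,1)$ and $C^* > 1$ with $(1-\theta^*)w_{N+1}(t) + \theta^* w_{K+1}(t) \leq w_{N+2}(t) + \log C^*$. Fix $k$ large enough that $\theta_k \leq \theta^*$; since $w_{K+1} \geq w_{N+1}$, convex combination yields $(1-\theta_k)w_{N+1}(t) + \theta_k w_{K+1}(t) \leq (1-\theta^*)w_{N+1}(t) + \theta^* w_{K+1}(t) \leq w_{N+2}(t) + \log C^*$.

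\emph{Conclusion.} For $z \in \Omega_3(t_0)$, $|\Re z| \leq |t_0| + R$, so Lemma \ref{wsuba} combined with monotonicity of $w_N, w_K$ gives $w_K(|t_0|+R) \leq w_{K+1}(|t_0|) + O(1)$ and $w_N(|t_0|+R) \leq w_{N+1}(|t_0|) + O(1)$. Hence
\[
\sup_{\Omega_1^k(t_0)}|f| \leq e^{w_{K+1}(|t_0|)+O(1)}\sup_{\Lambda^{h,+}_{F_3}} e^{-w_K(|\Re z|)}|f(z)|,
\]
and the analogous bound holds for $\Omega_3(t_0)$ against $\Lambda^{h,+}_{F_1}$ with $w_N, w_{N+1}$. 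Substituting into the Hadamard inequality, multiplying by $e^{-w_{N+2}(|t_0|)}$, and invoking the weight interpolation produces the claim with $\theta = \theta_k$, after taking the supremum over $\xi_0 \in \Gamma^+_{F_2}$.

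\emph{Main obstacle.} The principal technical difficulty is the geometric construction: the rectangles must be translates of a common shape (so that the Hadamard constant $\theta_k$ is \emph{uniform} in $t_0$), while simultaneously fitting inside $\Lambda^{h,+}_{F_1}$ and having tops sitting inside $\Lambda^{h,+}_{F_3}$, uniformly in $t_0$. This is precisely what forces the width $R$ to be proportional to $\varepsilon/(1+L)$, carefully balancing the Lipschitz bound against the vertical gaps between the $F_j$.
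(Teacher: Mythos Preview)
Your proof is correct and follows the same overall strategy as the paper: apply Lemma~\ref{Hadamard three circle} to translates of a fixed simply connected shape centered on $\Gamma^+_{F_2}$, then combine the resulting pointwise inequality with $(\tilde\alpha)$ and $(\tilde\omega)$.

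The differences are purely tactical. The paper uses truncated sectors (triangles) $\Delta_1\supseteq\Delta_2\supseteq\Delta_3$ adapted to the Lipschitz bound on the $F_j$, obtains one Hadamard exponent $\theta_1$, and then sets $\theta=\min(\theta_1,\theta_2)$, using the monotonicity $\sup_{\Delta_3}|f|\le\sup_{\Delta_1}|f|$ and $w_{K+1}\ge w_{N+1}$ to pass from the pair $(\theta_1,\theta_2)$ to the single $\theta$ in one line. You instead use rectangles and introduce a one-parameter family $\Omega_1^k$ shrinking to a boundary point of $\Omega_2$, arguing that the Hadamard exponent $\theta_k\to 0$ so that eventually $\theta_k\le\theta^*$. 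This is valid, but the limiting step tacitly uses that the Riemann maps $\Phi_k:\Omega_3\to B(0,1)$ normalized at $x_k\in\Omega_1^k$ depend continuously on the base point (so that $R_0(k)$ stays bounded away from $1$ while $r_0(k)\to 0$); this is standard but you gloss over it. The paper's $\min$ trick sidesteps this entirely and is the cleaner route. A minor point: your appeal to Remark~\ref{stupid-remark}$(ii)$ for the uniform gap is slightly indirect---the gap is more immediately visible from the very definition of $F_1<F_2$---but the conclusion is correct either way.
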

\begin{proof}
We only show the \enquote{$+$}-case, as the \enquote{$-$}-case is similar.
Fix \(0<a<\inf_{t\in \mathbb{R}}(F_2(t) - F_1(t))\) and \(b > \sup_{t\in \mathbb{R}}(F_3(t)-F_2(t))\), and set \(h = b+3+\sup_{t\in\mathbb{R}}F_2(t)\). For \(\beta \in (0,\pi/2)\) we define the sector 
    \[
    S_{\beta} = \{z \in \mathbb{C}\setminus \{0\} \mid \mathrm{Arg} (z) \in (\pi/2- \beta, \pi/2 +  \beta)\}.
    \]
    Since $F_1,F_2,F_3$ are uniformly Lipschitz continuous on \(\mathbb{R}\), there is \(\beta \in (0, \pi / 2)\) such that for $k = 1,2,3$
$$\{t+iF_k(t)\mid t\in \mathbb{R}\} + S_{\beta} \subseteq \Lambda^+_{F_k}. 
$$
    We define the triangles
    \begin{align*}
    &\Delta_1 = \{z \in S_{\beta} \mid \mathrm{Im} z < a+b+3\} - ia, \\
    &\Delta_2 = \{z \in S_{\beta} \mid \mathrm{Im}z < b + 2\}, \\
    & \Delta_3 = \{z\in S_{\beta} \mid \mathrm{Im}z < 1\} + ib.
    \end{align*}
    Note that $ \Delta_3 \subseteq \Delta_2$ and $\overline{\Delta}_2 \subseteq \Delta_1$. Lemma \ref{Hadamard three circle} implies that there is \(\theta_1 \in (0,1)\) such that
    \begin{equation}
    \label{triangles}
    \sup_{z\in \Delta_2} \lvert g(z)\rvert \leq \left(\sup_{z\in \Delta_1} \lvert g(z)\rvert\right)^{1-\theta_1} \left( \sup_{z\in\Delta_3} \lvert g(z)\rvert\right)^{\theta_1}, \qquad g \in \mathcal{H}(\Delta_1).
    \end{equation}
For \(\xi \in \Gamma_{F_2}\) and $k =1,2,3$ we define $\Delta_{\xi,k} = \Delta_{k} + \xi$. Then,  \(\Delta_{\xi,1}\subseteq \Lambda^{h,+}_{F_1}\) and \(\Delta_{\xi,3} \subseteq \Lambda^{h,+}_{F_3}\).
 Inequality  \eqref{triangles} (with  \(g(z) =f(z+\xi)\)) gives that
    \[
  |f(\xi)|\leq \left(\sup_{z\in \Delta_{\xi,1}} \lvert f(z)\rvert\right)^{1-\theta_1} \left( \sup_{z\in\Delta_{\xi,3}} \lvert f(z)\rvert\right)^{\theta_1} , \qquad  \forall f \in \mathcal{H}(\Lambda^{h,+}_{F_1}), \xi \in  \Gamma^+_{F_2}.    \]
Now let \(N\in \mathbb{N}\) and \(K > N+1\) be arbitrary. Since \(W\) satisfies \((\tilde{\omega})\), there are \(\theta_2\in(0,1)\) and \(D>0\) such that
\[
 e^{-w_{N+2}(t)} \leq D e^{-(1-\theta_2)w_{N+1}(t)}e^{-\theta_2 w_{K+1}(t)}, \qquad \forall t \geq 0.
\]
Set  \(\theta = \min(\theta_1,\theta_2)\). Let $A_L >1$, $L \in \mathbb{N}$,  be as in Lemma \ref{wsuba} 
  and set \(B = \sup_{z\in \Delta_1}\lvert \mathrm{Re}z\rvert\). Then, for all $L \in \mathbb{N}$
      \begin{align*}
 w_{L}(\lvert \mathrm{Re} z \rvert) & \leq w_{L+1}(\lvert \mathrm{Re} \xi\rvert)   + w_{L+1}(B) + \log A_L, \qquad \forall \xi  \in \Gamma^+_{F_2}, z \in \Delta_{\xi,1}.
    \end{align*}
We find that for all $f \in \mathcal{H}(\Lambda^{h,+}_{F_1})$, and $\xi \in \Gamma^+_{F_2}$
    \begin{align*}
        &e^{-w_{N+2}(\lvert \mathrm{Re}\xi\rvert)}\lvert f(\xi)\rvert \\ &\leq D e^{-(1-\theta_2)w_{N+1}(\lvert \mathrm{Re}\xi\rvert)}e^{-\theta_2 w_{K+1}(\lvert \mathrm{Re}\xi\rvert)}  \left(\sup_{z\in \Delta_{\xi,1}} \lvert f(z)\rvert\right)^{1-\theta_1} \left( \sup_{z\in\Delta_{\xi,3}} \lvert f(z)\rvert\right)^{\theta_1}  \\
   &\leq D\left(e^{-w_{N+1}(\lvert \mathrm{Re}\xi\rvert)}\sup_{z\in \Delta_{\xi,1}} \lvert f(z)\rvert\right)^{1-\theta} \left( e^{- w_{K+1}(\lvert \mathrm{Re}\xi\rvert)}  \sup_{z\in\Delta_{\xi,3}} \lvert f(z)\rvert\right)^{\theta}  \\
        &\leq C  \left(\sup_{z\in\Lambda^{h,+}_{F_1}}e^{-w_N(\lvert \mathrm{Re}z\rvert)}\lvert f(z)\rvert\right)^{1-\theta}\left(\sup_{z\in\Lambda^{h,+}_{F_3}} e^{-w_K(\lvert \mathrm{Re}z\rvert)}\lvert f(z)\rvert\right)^{\theta},
    \end{align*}
    where  \(C = DA_K^{\theta} A_N^{1-\theta}e^{\theta w_{K+1}(B) + (1-\theta) w_{N+1}(B)}\). 
    \end{proof}
\subsubsection{Proof of the result} We have all the ingredients to show the sufficiency of condition $(\omega)$ in Theorem \ref{theorem Omega}. 
\begin{proof}[Proof of Theorem \ref{theorem Omega} (sufficiency of condition \((\omega)\))]
By Lemma \ref{reduction}, we may assume without loss of generality that  \(W\) satisfies \((\tilde{\alpha})\), \((\tilde{N})\), and  \((\tilde{\omega})\). Since \(\mathcal{U}_W(T^{F,G})\not=\{0\}\), Theorem \ref{Summary surjectivity} and Remark \ref{remark non-triviality} yield that there is \(P \in \mathcal{U}_W(\mathbb{C})\) with \(P(0)=1\). Let \((a_N)_{N\in\mathbb{N}}\) be a positive increasing sequence that converges to \(1\).  By Lemma \ref{lipschitz}, there are \(F_N,G_N \in \mathcal{F}_1(\mathbb{R})\), $N \in \mathbb{N}$, such that \(a_NF<F_N<a_{N+1}F\) and \(a_NG<G_N<a_{N+1}G\) for all \(N\in\mathbb{N}\). It suffices to show that for all \(N\in\mathbb{N}\) and \(K\geq N+4\) there are \(\theta\in(0,1)\) and \(C>0\) such that 
  \[
\|f\|^*_{N+4,F_{N+4},G_{N+4}}\leq C(\|f\|^*_{N,F_N,G_N})^{1-\theta} (\|f\|^*_{K,F_K,G_K})^{\theta}, \qquad    \forall f \in (\mathcal{U}_W(T^{F,G}))^{\prime}_{N,F_N,G_N}.
  \]
Set $\Gamma_N = \Gamma^+_{F_{N}} \cup \Gamma^-_{G_{N}}$ for $N \in \mathbb{N}$.  Proposition \ref{holomorphic log-convex} implies that  for all \(N \in \mathbb{N}\) and \(K \geq N+3\) there are \(h > 0\) with \(\overline{T^{F,G}}\subseteq T^h\), \(\theta \in (0,1)\), and \(C_1 > 0\) such that for all $g \in \mathcal{H}(T^h \backslash \overline{T^{F_{N+1},G_{N+1}}})$
    \begin{equation}
    \label{almostomega}
    \sup_{\xi \in \Gamma_{N+3}} e^{-w_{N+3}(\lvert \mathrm{Re}\xi\rvert)}\lvert g(\xi)\rvert \leq C_1 \left( | g |_{N+1,F_{N+1},G_{N+1}} \right)^{1-\theta} \left(| g |_{K+1,F_{K+1},G_{K+1}} \right)^{\theta}.
\end{equation}
Let \(N\in\mathbb{N}\) and \(K\geq N+4\) be arbitrary.  Choose \(h>0\), \(\theta \in (0,1)\), and \(C_1 > 0\) such that \eqref{almostomega} holds. Propositions \ref{continuity weighted Cauchy transform}, \ref{bvbefore}$(i)$, and \ref{reconstruction formula}  imply that there are $C_2,C_3 >0$ such that for all  $f \in (\mathcal{U}_W(T^{F,G}))^{\prime}_{N,F_N,G_N}$ 
  \begin{align*}
    \|f\|^*_{N+4,F_{N+4},G_{N+4}} &= \|\mathrm{bv}(\mathrm{C}_P(f))\|^*_{N+4,F_{N+4},G_{N+4}} \\
      &\leq C_2  \sup_{\xi \in \Gamma_{N+3}}e^{-w_{N+3}(\lvert \mathrm{Re}\xi\rvert)}\lvert \mathrm{C}_P(f)(\xi)\rvert  \\
      &\leq C_1C_2 \left( | \mathrm{C}_P(f) |_{N+1,F_{N+1},G_{N+1}} \right)^{1-\theta} \left(| \mathrm{C}_P(f) |_{K+1,F_{K+1},G_{K+1}} \right)^{\theta} \\
      &\leq C_1C_2C_3(\|f\|^*_{N,F_N,G_N})^{1-\theta} (\|f\|^*_{K,F_K,G_K})^{\theta}.
  \end{align*}
  \end{proof}
\subsection{Necessity of the condition $(\omega)$} We now establish the necessity of the condition $(\omega)$ in Theorem \ref{theorem Omega}.
\begin{proof}[Proof of Theorem \ref{theorem Omega} (necessity of condition $(\omega)$)] 
By Lemma \ref{reduction}, we may assume without loss of generality that  \(W\) satisfies \((\tilde{\alpha})\) and \((\tilde{N})\). Since \(\mathcal{U}_W(T^{F,G})\not=\{0\}\), Theorem \ref{Summary surjectivity} and Remark \ref{remark non-triviality} yield that there is \(P \in \mathcal{U}_W(\mathbb{C})\) with \(P(0)=1\). 
Let $A_N >1$, $N \in \mathbb{N}$,  be as in Lemma \ref{wsuba}.
Let \((a_N)_{N\in\mathbb{N}}\) be a positive increasing sequence that converges to \(1\).
Take \(N\in\mathbb{N}\)  arbitrary. As \(\mathcal{U}_W(T^{F,G})\) satisfies \((\Omega)\), there is \(M \geq N\) such that for all \(K \geq M\), there are \(C>0\) and \(\theta\in(0,1)\) such that
\begin{equation}\label{dirac omega}
\|f \|^*_{M,a_M}\leq C(\|f \|^*_{N,a_N})^{1-\theta}(\|f \|^*_{K,a_K})^{\theta}, \qquad \forall f  \in (\mathcal{U}_W(T^{F,G}))'.
\end{equation}
Let $h > 0$ be such that $\overline{T^{F,G}} \subseteq T^h$. There is  $C_1 > 0$  such that 
$$
|P(\xi)| \leq C_1 e^{-w_{M+1}(|\mathrm{Re}\xi|)}, \qquad \forall \xi \in T^h.
$$
Let $t \in \mathbb{R}$ be arbitrary. Note that for all \(L\in\mathbb{N}\)
\[
\|\delta_t\|^* _{L,a_L} = \sup_{\varphi\in\mathcal{U}_W(T^{F,G}),\|\varphi\|_{L,a_L}\leq1}\lvert\varphi(t)\rvert\leq e^{-w_L(t)}.
\]
Hence, using \eqref{dirac omega} with $f = \delta_t$, we find that
\begin{align*}
    &e^{-w_{M+1}(t)}=e^{-w_{M+1}(t)}\lvert \langle \delta_t,P(\cdot-t)\rangle\rvert 
    \leq e^{-w_{M+1}(t)}\|\delta_t\|^*_{M,a_M} \|P(\cdot-t)\|_{M,a_M} \\
    &\leq A_MC_1 \|\delta_t\|^*_{M,a_M}\leq A_MCC_1\left(\|\delta_t\|^*_{N,a_N}\right)^{1-\theta}\left(\|\delta_t\|^*_{K,a_K}\right)^{\theta} \\
    &\leq A_MCC_1e^{-(1-\theta)w_N(t)}e^{-\theta w_K(t)},
\end{align*}
which implies that \(W\) satisfies \((\omega)\).
\end{proof}

\section{Proof of Theorem \ref{Converse}}\label{Proof converse}

In this section, we show Theorem \ref{Converse}.  Our proof of is based on the following result about interpolation of vector-valued holomorphic functions from  \cite{D-N}.
Given $X \subseteq \mathbb{C}$ open and a lcHs $E$, we denote by $\mathcal{H}(X;E)$ the space of $E$-valued holomorphic functions on $X$.

\begin{proposition} \cite[Example 7.8]{D-N}\label{Concrete interpolation}
    Let \(X\subseteq \mathbb{C}\) be  open and connected, and let \(E\) be a locally complete lcHs with a fundamental sequence of bounded sets. Suppose that there is a sequence \((z_n)_{n\in\mathbb{N}}\) in \(X\) without accumulation points in \(X\) such that for each sequence \((e_n)_{n\in\mathbb{N}}\) in \(E\) there exists  \(\mathbf{f} \in \mathcal{H}(X;E)\) with \(\mathbf{f}(z_n)=e_n\) for all \(n\in\mathbb{N}\). Then, \(E\) satisfies \((A)\).
\end{proposition}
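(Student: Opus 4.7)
The plan is to verify the hypothesis of Proposition \ref{Concrete interpolation} for the open connected set $X = T^{F,G}$: produce a sequence $(z_n)_{n \in \mathbb{N}} \subseteq T^{F,G}$ without accumulation in $T^{F,G}$ such that for every sequence $(e_n)_{n \in \mathbb{N}}$ in $E$ there exists $\mathbf{f} \in \mathcal{H}(T^{F,G}; E)$ with $\mathbf{f}(z_n) = e_n$ for all $n$. The construction of such an $\mathbf{f}$ is a cutoff-and-solve-$\overline{\partial}$ technique driven by the assumed surjectivity of \eqref{vvdeltabar}.

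I would choose nodes $(z_n)$ converging to a boundary point of $T^{F,G}$, say $z_n = i(F(0) - 1/n) \to iF(0) \in \partial T^{F,G}$; note that $\mathrm{Re}(z_n) \equiv 0$, so the weights $e^{w_N(|\mathrm{Re}(z_n)|)}$ stay uniformly bounded in $n$. Next take smooth cutoff functions $\chi_n \in C_c^\infty(T^{F,G})$ with pairwise disjoint supports in shrinking balls $B(z_n, r_n) \subseteq T^{F,G}$ and $\chi_n \equiv 1$ near $z_n$. Given $(e_n) \in E^{\mathbb{N}}$, the locally finite sum $\mathbf{g}(z) = \sum_n \chi_n(z) e_n$ lies in $C^\infty(T^{F,G}; E)$ and satisfies $\mathbf{g}(z_n) = e_n$. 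The crucial property of this set-up is that for each $a \in (0,1)$ only finitely many of the balls $B(z_n, r_n)$ meet the interior strip $\overline{T^{aF, aG}}$ (namely those with $n \lesssim 1/((1-a)F(0))$), so $\overline{\partial}\mathbf{g} = \sum_n (\overline{\partial}\chi_n) e_n$ restricted to any such strip is a finite sum of smooth, compactly supported functions. Consequently every seminorm $p_{N, \alpha, a}(\overline{\partial}\mathbf{g})$ is finite \emph{regardless of how fast $(e_n)$ grows in $E$}, and hence $\overline{\partial}\mathbf{g} \in \mathcal{K}_W(T^{F,G}; E)$. The surjectivity hypothesis then yields $\mathbf{u} \in \mathcal{K}_W(T^{F,G}; E)$ with $\overline{\partial}\mathbf{u} = \overline{\partial}\mathbf{g}$, so $\mathbf{f}_0 := \mathbf{g} - \mathbf{u} \in \mathcal{H}(T^{F,G}; E)$ and $\mathbf{f}_0(z_n) = e_n - \mathbf{u}(z_n)$.

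The main obstacle is the residual error $\mathbf{u}(z_n)$: the above construction interpolates $e_n$ only modulo this correction. I would remove it by iteration --- applying the same recipe with data $(\mathbf{u}(z_n))_n$ in place of $(e_n)_n$ to produce a holomorphic correction $\mathbf{f}_1$ with $\mathbf{f}_1(z_n) = \mathbf{u}(z_n) - \mathbf{u}_1(z_n)$, continuing inductively, and taking $\mathbf{f} = \sum_k \mathbf{f}_k$. This telescopes to $\mathbf{f}(z_n) = e_n - \lim_k \mathbf{u}_k(z_n)$, so the hard part will be the quantitative control of the errors: since $\overline{\partial}\mathbf{g}_k = 0$ on the inner balls $B(z_n, r_n/2)$, each $\mathbf{u}_k$ is holomorphic there, and Cauchy's integral formula bounds $\mathbf{u}_k(z_n)$ by values on a small circle around $z_n$. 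Controlling these circle values through the $\mathcal{K}_W$-seminorms of $\mathbf{u}_k$ --- carefully tracking how the seminorm index $a \uparrow 1$ must be chosen in terms of $n$ so that the circle lies inside $\overline{T^{aF,aG}}$ --- should yield the decay needed for convergence of the iterative series in the compact-open topology of $\mathcal{H}(T^{F,G}; E)$, completing the verification of the interpolation hypothesis.
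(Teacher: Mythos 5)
Your proposal does not prove the statement in question. Proposition \ref{Concrete interpolation} asserts an implication of the form \emph{``if the interpolation property holds at some discrete sequence $(z_n)$ in $X$, then $E$ satisfies $(A)$''}; what you have written is an argument that \emph{verifies the hypothesis} of that proposition for $X=T^{F,G}$, assuming the surjectivity of $\overline{\partial}$ on $\mathcal{K}_W(T^{F,G};E)$. That is the content of Lemma \ref{Interpolation theorem} (the subsequent result in the paper), not of the proposition itself. Nowhere do you address the actual conclusion to be established, namely how the interpolation hypothesis forces the inclusions $B_m\subseteq rB_n+\frac{C}{r^s}B_k$ of Definition \ref{DEFA}. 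The paper does not reprove this either --- it imports it from \cite[Example 7.8]{D-N}, where it is obtained from the extension/splitting theory for $(LB)$-spaces; any genuine proof has to engage with the fundamental sequence of bounded sets of $E$ and extract the condition $(A)$ from the surjectivity of the restriction map $\mathcal{H}(X;E)\to E^{\mathbb{N}}$, $\mathbf{f}\mapsto(\mathbf{f}(z_n))_n$. Your text contains no step of this kind, so the statement remains unproved.

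Even read as an attempt at Lemma \ref{Interpolation theorem}, the argument has a gap and is needlessly complicated. Your iteration scheme to kill the residual errors $\mathbf{u}(z_n)$ is left unfinished: the convergence of $\sum_k\mathbf{f}_k$ and the vanishing of $\lim_k\mathbf{u}_k(z_n)$ are exactly the ``hard part'' you defer, and there is no a priori reason the corrections contract --- the solution operator for $\overline{\partial}$ coming from abstract surjectivity carries no norm bound you can iterate. The standard device, and the one the paper uses, avoids this entirely: choose $\varphi\in\mathcal{H}(T^{F,G})$ whose zero set is exactly $\{z_n\}$ (via \cite[Theorem 2]{Interpolation theory}) and seek $\mathbf{f}=\mathbf{g}+\varphi\mathbf{h}$; the equation for $\mathbf{h}$ becomes $\overline{\partial}\mathbf{h}=-\overline{\partial}\mathbf{g}/\varphi$, whose right-hand side lies in $\mathcal{K}_W(T^{F,G};E)$ because $\overline{\partial}\mathbf{g}$ vanishes near each $z_n$ and has bounded real part of support, and a single application of the surjectivity hypothesis finishes the proof since $\varphi(z_n)=0$ makes the correction term vanish at the nodes automatically.
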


Hence, to prove Theorem \ref{Converse}, it suffices to show that the surjectivity of the $E$-valued Cauchy--Riemann operator \(\overline{\partial}\colon \mathcal{K}_W(T^{F,G};E)\rightarrow\mathcal{K}_W(T^{F,G};E)\) implies that there is a sequence \((z_n)_{n\in\mathbb{N}}\) in \(T^{F,G}\), without accumulation points in \(T^{F,G}\), such that for each sequence \((e_n)_{n\in\mathbb{N}}\) in \(E\) there exists  \(\mathbf{f} \in \mathcal{H}(T^{F,G};E)\) with \(\mathbf{f}(z_n)=e_n\) for all \(n\in\mathbb{N}\). This is established in the following result, which is inspired by \cite[Theorem 3]{Interpolation theory}.
\begin{lemma}\label{Interpolation theorem}
    Let \(F,G\in\mathcal{F}(\mathbb{R})\), let \(W\) be a weight function system, and let \(E\) lcHs. Let \((t_n)_{n\in\mathbb{N}}\) be an increasing sequence in \((0,F(0))\) such that \(t_n \to F(0)\) as $n \to \infty$. Suppose  that the map \(\overline{\partial}\colon \mathcal{K}_W(T^{F,G};E)\rightarrow\mathcal{K}_W(T^{F,G};E)\) is surjective. Then, for each sequence \((e_n)_{n\in\mathbb{N}}\) in \(E\) there exists \(\mathbf{f} \in \mathcal{H}(T^{F,G};E)\) such that \(\mathbf{f} (it_n)=e_n\) for all \(n\in\mathbb{N}\).
    \end{lemma}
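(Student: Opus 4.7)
The plan is to use the classical $\overline{\partial}$-correction technique, combined with the Weierstrass factorisation theorem on the open set $T^{F,G}$. Since the sequence $(it_n)_{n\in\mathbb{N}}$ is discrete in $T^{F,G}$ (it accumulates only at $iF(0)$, which lies on $\partial T^{F,G}$), the Weierstrass theorem for arbitrary open subsets of $\mathbb{C}$ supplies a function $q\in\mathcal{H}(T^{F,G})$ whose zero set is exactly $\{it_n:n\in\mathbb{N}\}$ with all zeros simple. I then construct a smooth $E$-valued interpolant by bump functions and correct it via a multiple of $q$ to annihilate its $\overline{\partial}$.

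For each $n$ I pick a disk $B(it_n,r_n)\subset T^{F,G}$ and a cut-off $\chi_n\in C^{\infty}_c(\mathbb{C})$ with $0\le\chi_n\le 1$, $\chi_n\equiv 1$ on $B(it_n,r_n/2)$, and $\operatorname{supp}\chi_n\subset B(it_n,r_n)$. The radii are chosen so that the disks are pairwise disjoint and, crucially, so that for every $a\in(0,1)$ only finitely many $\operatorname{supp}\chi_n$ meet $\overline{T^{aF,aG}}$: this is possible because $t_n\to F(0)>aF(0)$ forces $it_n\notin\overline{T^{aF,aG}}$ for all large $n$, so a sufficiently small $r_n$ keeps $\operatorname{supp}\chi_n$ away from $\overline{T^{aF,aG}}$. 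Then
\[
\mathbf{g}(z):=\sum_{n\in\mathbb{N}}\chi_n(z)\,e_n
\]
is a locally finite sum, hence lies in $C^{\infty}(T^{F,G};E)$ and satisfies $\mathbf{g}(it_n)=e_n$. As only finitely many $\chi_n$ are active on each $\overline{T^{aF,aG}}$ and $|\operatorname{Re}\xi|\le r_n$ on $\operatorname{supp}\chi_n$, every seminorm $p_{N,\alpha,a}(\mathbf{g})$ reduces to a finite sum of bounded quantities, so $\mathbf{g}\in\mathcal{K}_W(T^{F,G};E)$. Setting $\mathbf{h}:=\overline{\partial}\mathbf{g}=\sum_n(\overline{\partial}\chi_n)\,e_n$, one observes that $\mathbf{h}$ vanishes on each $B(it_n,r_n/2)$ since $\chi_n$ is constant there. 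Therefore $\mathbf{h}/q$ extends smoothly to all of $T^{F,G}$ with value $0$ at each $it_n$, and the same finiteness argument, combined with a positive lower bound on $|q|$ over the compact set $\operatorname{supp}(\overline{\partial}\chi_n)$, yields $\mathbf{h}/q\in\mathcal{K}_W(T^{F,G};E)$.

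By the surjectivity hypothesis there is $\mathbf{v}\in\mathcal{K}_W(T^{F,G};E)$ with $\overline{\partial}\mathbf{v}=\mathbf{h}/q$. I then set $\mathbf{f}:=\mathbf{g}-q\mathbf{v}\in C^{\infty}(T^{F,G};E)$. Since $q$ is holomorphic,
\[
\overline{\partial}\mathbf{f}=\overline{\partial}\mathbf{g}-q\,\overline{\partial}\mathbf{v}=\mathbf{h}-q\cdot(\mathbf{h}/q)=0,
\]
so $\mathbf{f}\in\mathcal{H}(T^{F,G};E)$, and $\mathbf{f}(it_n)=\mathbf{g}(it_n)-q(it_n)\mathbf{v}(it_n)=e_n-0=e_n$, as required. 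The main technical hurdle will be coordinating the radii $r_n$ with the countably many seminorms of $\mathcal{K}_W(T^{F,G};E)$: a diagonal selection over a fundamental increasing sequence $a_k\uparrow 1$ must simultaneously ensure the separation $\operatorname{supp}\chi_n\cap\overline{T^{a_kF,a_kG}}=\emptyset$ whenever $it_n\notin\overline{T^{a_kF,a_kG}}$, and provide uniform lower bounds on $|q|$ that translate into seminorm control for $\mathbf{h}/q$. Everything else, including the invocation of the Weierstrass factorisation on the open set $T^{F,G}$, is standard.
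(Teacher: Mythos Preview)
Your approach is essentially identical to the paper's: Weierstrass function on $T^{F,G}$, bump-function interpolant, and $\overline{\partial}$-correction via the surjectivity hypothesis. The paper sidesteps what you flag as the ``main technical hurdle'' by simply taking all radii in $(0,1)$, which forces $\operatorname{supp}\mathbf{g}\subseteq\{|\operatorname{Re}z|\le 1\}$; since $\overline{T^{aF,aG}}\cap\{|\operatorname{Re}z|\le 1\}$ is a compact subset of $T^{F,G}$ and the sum is locally finite there, all seminorm bounds for $\mathbf{g}$ and $\overline{\partial}\mathbf{g}/q$ follow at once---no diagonal selection over $a_k\uparrow 1$ is needed.
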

    \begin{proof}
    Note that \((it_n)_{n\in\mathbb{N}}\) is a sequence in \(T^{F,G}\) without accumulation points in \(T^{F,G}\). Choose  a sequence \((\varepsilon_n)_{n\in\mathbb{N}}\) in \((0,1)\) such that \(\{it_k \mid k \in \mathbb{N}\}\cap B(it_n,\varepsilon_n) = \{it_n\}\) for all \(n\in\mathbb{N}\). Take \(\psi_n \in C^{\infty}(\mathbb{R}^2)\) such that \(\psi_n \equiv 1\) on \(\overline{B}(it_n,\varepsilon_n/2)\) and  \(\mathrm{supp}\psi_n\subseteq B(it_n,\varepsilon_n)\). 
    Let \((e_n)_{n\in\mathbb{N}}\) be an arbitrary sequence in \(E\). Define \(\mathbf{g}(z) = \sum_{n=1}^{\infty} e_n \psi_n(z)\) for \(z\in\mathbb{C}\). Then,  \(\mathbf{g} \in C^{\infty}(T^{F,G};E)\), as the sum defining \(\mathbf{g}\)  is locally finite, and 
    \begin{equation}
    \label{compact}
    \lvert \mathrm{Re} z\rvert > 1 \implies \mathbf{g}(z)=0, \qquad \forall z \in \mathbb{C}.
    \end{equation}
Moreover,  \(\mathbf{g}(it_n)=e_n\)  for all \(n \in \mathbb{N}\).
          Choose \(\varphi\in\mathcal{H}(T^{F,G})\) such that $\varphi$ has a zero at each point $it_n$, $n \in \mathbb{N}$, and such that \(\varphi\) has no other zeros in \(T^{F,G}\); see e.g.\ \cite[Theorem 2]{Interpolation theory}.
    The idea is now to define \(\mathbf{f}=\mathbf{g}+\varphi \mathbf{h}\) for a suitably chosen \(\mathbf{h} \in C^{\infty}(T^{F,G};E)\) such that \(\mathbf{f}\in\mathcal{H}(T^{F,G},E)\). Then, \(\mathbf{f}(it_n)= \mathbf{g}(it_n)=e_n\) for all \(n\in\mathbb{N}\) and the proof would be complete. We now show that such an \(\mathbf{h}\) exists. It suffices to show that there is  \(\mathbf{h}\in C^\infty(T^{F,G};E)\) such that \(\overline{\partial}(\mathbf{g}+\varphi \mathbf{h}) = 0\) on \(T^{F,G}\), or equivalently, as \(\varphi\in\mathcal{H}(T^{F,G})\),  \(\varphi  \overline{\partial} \mathbf{h} =- \overline{\partial} \mathbf{g} \) on \(T^{F,G}\).
    Since \(\varphi\) has the points \(it_n\), $n\in\mathbb{N}$, as its only zeros in \(T^{F,G}\), and  \(\overline{\partial}\mathbf{g} = 0\) in a neighborhood of \(it_n\) for all \(n\in\mathbb{N}\),  it follows that \(\overline{\partial}\mathbf{g}/\varphi\in C^{\infty}(T^{F,G};E)\). In fact, by \eqref{compact},  
      it holds that \(\overline{\partial}\mathbf{g}/\varphi \in \mathcal{K}_W(T^{F,G};E)\). As \(\overline{\partial}\) is surjective on \(\mathcal{K}_W(T^{F,G};E)\), there is  \(\mathbf{h} \in  \mathcal{K}_W(T^{F,G};E) \subseteq C^{\infty}(T^{F,G};E)\) such that \(\overline{\partial} \mathbf{h} = -\overline{\partial}\mathbf{g}/\varphi\)  and thus \(\varphi\overline{\partial} \mathbf{h} =- \overline{\partial} \mathbf{g} \) on \(T^{F,G}\). 
    \end{proof}
    
\begin{proof}[Proof of Theorem \ref{Converse}]
The result follows from  Proposition \ref{Concrete interpolation} and Lemma \ref{Interpolation theorem}.
\end{proof}

\end{document}